%
%
%
%
\documentclass{amsproc}

\usepackage{amssymb,amsmath,amsfonts,stmaryrd,mathrsfs,amscd}
\usepackage[sans]{dsfont}
\usepackage{fouridx,chemarrow}
\usepackage[dvips,cmtip,all]{xy}
\usepackage{setspace,paralist}
\usepackage[linkcolor=black]{hyperref}
\usepackage{lineno,color}
\usepackage{tikz,mathabx} 
\usetikzlibrary{matrix,arrows,decorations.pathmorphing}

\newtheorem{theorem}{Theorem}[section]
\newtheorem{lemma}[theorem]{Lemma}

\theoremstyle{definition}
\newtheorem{definition}[theorem]{Definition}

\newtheorem{proposition}[theorem]{Proposition}
\newtheorem{corollary}[theorem]{Corollary}
\newtheorem{example}[theorem]{Example}

\numberwithin{equation}{section}

\theoremstyle{remark}
\newtheorem{remark}[theorem]{Remark}

\numberwithin{equation}{section}

\def\Spec{\operatorname{Spec}}
\def\Min{\operatorname{Min}}
\def\Ass{\operatorname{Ass}}
\def\edim{\operatorname{edim}}

\def\rank{\operatorname{rank}}

\def\Fin{\operatorname{\varphi_{\varstar}^{\mathit{n}}}}

\def\Fim{\operatorname{\varphi_{\varstar}^{\mathit{m}}}}
\def\Finm{\operatorname{\varphi_{\varstar}^{\mathit{n}+\mathit{m}}}}
\def\Fi{\operatorname{\varphi_{\varstar}}}
\def\Ef{\operatorname{\mathit{f}_{\varstar}}}

\def\Fib{\operatorname{\overline{\varphi}_{\varstar}}}



\begin{document}

\title{Dynamics and entropy in local algebra}

\author{Mahdi Majidi-Zolbanin}
\address{Department of Mathematics, LaGuardia Community College of the City University of New York, 31-10 Thomson Avenue, Long Island City, NY 11101}
\email{mmajidi-zolbanin@lagcc.cuny.edu}

\author{Nikita Miasnikov}
\address{Department of Mathematics,The Graduate Center of the City University of New York, 365 Fifth Avenue, New York, NY 10016}
\email{n5k5t5@gmail.com}

\author{Lucien Szpiro}
\address{Department of Mathematics,The Graduate Center of the City University of New York, 365 Fifth Avenue, New York, NY 10016}
\email{lszpiro@gc.cuny.edu}
\thanks{The second and third authors received funding from the NSF Grants DMS-0854746 and DMS-0739346.}



\dedicatory{}

\keywords{Algebraic entropy, Algebraic dynamics, Hilbert-Kunz multiplicity.}

\begin{abstract}
We introduce and study a notion of \emph{algebraic entropy} for self-maps of finite length of Noetherian local rings, and develop its properties. We show that it shares the standard properties of \emph{topological entropy}. For \emph{finite} self-maps we explore the connection between the degree of the map and its algebraic entropy, when the ring is a Cohen-Macaulay domain. As an application of algebraic entropy, we give a characteristic-free interpretation of the definition of Hilbert-Kunz multiplicity.
\end{abstract}

\maketitle

\section{Introduction and notations}\label{Introduction}
Iterating a map from a space to itself generates a \emph{discrete-time} dynamical system. One way to measure the complexity of such a system is by using the notion of \emph{entropy}. In dynamical systems entropy is a notion that ``measures the rate of increase in dynamical complexity as the system evolves with time.'' (\cite[p.~313]{Young}.) Depending on the type of the underlying space and its self-maps, different notions of entropy can be defined and studied. For example, for compact topological spaces and continuous self-maps, Adler, Konheim, and McAndrew introduced the notion of \emph{topological entropy} in~\cite{AdKoMc} and for probability spaces and measure-preserving self-maps, Kolmogorov introduced the notion of \emph{measure-theoretic entropy} in~\cite{Kol} and later Sinai improved this notion in~\cite{Sin}.  \par In this paper we are concerned with discrete-time dynamical systems generated by local self-maps (endomorphisms) of Noetherian local rings. For a Noetherian local ring \((R,\mathfrak{m})\), and a local self-map \(\varphi\) of \(R\) with the property that \(\varphi(\mathfrak{m})R\) is \(\mathfrak{m}\)-primary (a self-map with this property is said to be of finite length), we introduce a notion of \emph{algebraic entropy} and develop its properties. In particular, we show that this notion of algebraic entropy shares many properties of topological entropy. To describe a number of these properties, let \(X\) be a compact topological space with a continuous self-map \(\varphi\) and let \(h_{\mathrm{top}}(\varphi,X)\) be the topological entropy of \(\varphi\). (For definition of topological entropy see Appendix~\ref{appind}.) It is well-known that \(h_{\mathrm{top}}(\varphi,X)\) satisfies the following properties (see~\cite[p.p.~311-313]{AdKoMc}):\smallskip

\begin{compactenum}
\item[\textbf{1})] For any \(k\in\mathbb{N}\), \(h(\varphi^k,X)=k\cdot h(\varphi,X)\). 
\item[\textbf{2})] If \(Y\subset X\) is a closed \(\varphi\)-invariant subspace, then \(h(\varphi\restriction_Y,Y)\leq h(\varphi,X)\).
\item[\textbf{3})] If \(f:X\rightarrow X^\prime\) is a homeomorphism, then \(h(f\circ\varphi\circ f^{-1},X^\prime)=h(\varphi,X)\).
\item[\textbf{4})] If \(X=\bigcup_{i=1}^mY_i\), where \(Y_i\), (\(i=1,\ldots,m\)) are closed \(\varphi\)-invariant subsets, then \(h(\varphi,X)=\max\big\{h(\varphi\restriction_{Y_i},Y_i)\;|\: 1\leq i\leq m\big\}\).
\end{compactenum}\smallskip

\noindent Now let $\varphi$ be a local self-map of finite length of a Noetherian local ring $R$, and note that if $\mathfrak{a}$ is an $\varphi$-invariant ideal of $R$, that is, if $\varphi(\mathfrak{a})R\subseteq\mathfrak{a}$, then $\varphi$ induces a self-map of finite length of $R/\mathfrak{a}$. Denote this induced self-map by $\overline{\varphi}$. In Section~\ref{Fpro} we will show that algebraic entropy satisfies the following properties:\smallskip

\begin{compactenum}
\item[\textbf{1}$^\prime$)] For any \(k\in\mathbb{N}\), \(h_{\mathrm{alg}}(\varphi^k,R)=k\cdot h_{\mathrm{alg}}(\varphi,R)\). 
\item[\textbf{2}$^\prime$)] If \(\mathfrak{a}\) is an \(\varphi\)-invariant ideal, then \(h_{\mathrm{alg}}(\overline{\varphi},R/\mathfrak{a})\leq h_{\mathrm{alg}}(\varphi,R)\).
\item[\textbf{3}$^\prime$)] If \(f:R\rightarrow R^\prime\) is a ring isomorphism, then \(h_{\mathrm{alg}}(f\circ\varphi\circ f^{-1},R^\prime)=h_{\mathrm{alg}}(\varphi,R)\).
\item[\textbf{4}$^\prime$)] If all minimal prime ideals $\mathfrak{p}_1,\ldots,\mathfrak{p}_m$ of $R$ are \(\varphi\)-invariant, and if $\overline{\varphi}_i$ is the self-map induced by $\varphi$ on $R/\mathfrak{p}_i$, then $h_{\mathrm{alg}}(\varphi,R)=\max\big\{h_{\mathrm{alg}}(\overline{\varphi}_i,R/\mathfrak{p}_i)\;|\;1\leq i\leq m\big\}$.
\end{compactenum}\smallskip

The analogy between conditions \textbf{1} - \textbf{4} and \textbf{1$^\prime$} - \textbf{4$^\prime$} is evident.\par  A number of notions in local commutative algebra, that are associated with the Frobenius endomorphism and its iterations, currently only make sense in rings of positive characteristic (e.g., the Hilbert-Kunz multiplicity). The notion of algebraic entropy will allow us to give a characteristic-free interpretation of definitions of these notions. (In the case of the Hilbert-Kunz multiplicity, see Definition~\ref{HKM} and Remark~\ref{HKMA}.) Thus, using algebraic entropy, these notions may be defined and studied in any characteristic and for arbitrary self-maps of finite length.\par The organization of material in this paper is as follows: Section~\ref{Prelim} contains some preparatory but important facts about length of local ring homomorphisms, that will be used throughout the paper. We define the notion of algebraic entropy in Section~\ref{Defin} and give two examples. In Section~\ref{Firstpro} we develop the first set of properties of algebraic entropy. In Section~\ref{Fpro} we prove a number of properties of algebraic entropy, that are similar to properties of topological entropy. In Section~\ref{Findegr} we restrict our attention to \emph{finite} self-maps of domains and explore the connection between the degree and algebraic entropy of such self-maps. Finally, in Section~\ref{Bounn} we find upper and lower bounds for algebraic entropy. The Appendix contains a quick review of the definition of topological entropy, as defined in~\cite{AdKoMc}. \bigskip

\noindent \textbf{Notations.} All rings in this paper are assumed to be Noetherian, commutative and with identity element. By a self-map of a ring we mean an endomorphism of that ring. For a self-map \(\varphi\) of a ring we will write \(\varphi^n\) for the \(n\)-fold composition of \(\varphi\) with itself. Given a ring homomorphism \(f:R\rightarrow S\) and an \(S\)-module \(N\), we will denote by \(\Ef N\) the \(R\)-module obtained by restriction of scalars. That is, \(\Ef N\) is the \(R\)-module whose underlying abelian group is \(N\) and whose \(R\)-module structure is given by \(r\cdot x=f(r)\:x\), for \(r\in R\) and \(x\in\Ef N\). This notation is consistent with the one used in~\cite{Bourb}. 
If \(M\) is an \(R\)-module of finite length, we will denote its length by \(\ell_R(M)\). For a finite \(R\)-module \(M\) we will use \(\nu(M)\) to denote the minimum number of generators of \(M\) over \(R\). For a ring \(R\), the set of all minimal prime ideals of \(R\) will be denoted by \(\Min(R)\). Finally, if \(\varphi\) is a self-map of a ring \(R\), we will denote the self-map induced by \(\varphi\) on \(\Spec(R)\) by \({^a\varphi}\).
\section{Preliminaries}\label{Prelim}
\begin{definition}\label{deflambda}
A homomorphism \(f:(R,\mathfrak{m})\rightarrow(S,\mathfrak{n})\) of Noetherian local rings is said to be \emph{of finite length}, if it is local and \(f(\mathfrak{m})S\) is \(\mathfrak{n}\)-primary. In this case we define the length of $f$, \(\lambda(f)\in[1,\infty)\) as
\(\lambda(f):=\ell_S\big(S/f(\mathfrak{m})S\big)\).
\end{definition}
\begin{remark}
Every finite homomorphism is of finite length, but the converse is not true.
\end{remark}
\begin{proposition}\label{crucial}
Let \(f:(R,\mathfrak{m})\rightarrow(S,\mathfrak{n})\) be a homomorphism of finite length of Noetherian local rings. If \(\mathfrak{q}\) is an \(\mathfrak{m}\)-primary ideal of \(R\), then \(f(\mathfrak{q})S\) is \(\mathfrak{n}\)-primary.
\end{proposition}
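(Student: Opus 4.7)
The plan is to use the standard characterization of primary ideals in a Noetherian local ring in terms of powers of the maximal ideal, namely that an ideal $\mathfrak{a}$ of $(R,\mathfrak{m})$ is $\mathfrak{m}$-primary if and only if $\mathfrak{m}^k\subseteq\mathfrak{a}\subseteq\mathfrak{m}$ for some positive integer $k$. The argument is then purely a sandwich estimate.

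First I would translate each of the two primary hypotheses into such inclusions. Since $\mathfrak{q}$ is $\mathfrak{m}$-primary there exists $k\in\mathbb{N}$ with $\mathfrak{m}^k\subseteq\mathfrak{q}\subseteq\mathfrak{m}$. Since $f$ is of finite length, $f(\mathfrak{m})S$ is $\mathfrak{n}$-primary, so there exists $\ell\in\mathbb{N}$ with $\mathfrak{n}^\ell\subseteq f(\mathfrak{m})S\subseteq\mathfrak{n}$, where the second inclusion also uses that $f$ is local, so $f(\mathfrak{m})\subseteq\mathfrak{n}$.

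Next I would chain these inclusions. Applying $f$ to $\mathfrak{m}^k\subseteq\mathfrak{q}$ and extending to $S$ gives $f(\mathfrak{m}^k)S=\bigl(f(\mathfrak{m})S\bigr)^k\subseteq f(\mathfrak{q})S$, and then raising $\mathfrak{n}^\ell\subseteq f(\mathfrak{m})S$ to the $k$-th power gives
\[
\mathfrak{n}^{k\ell}=(\mathfrak{n}^\ell)^k\subseteq \bigl(f(\mathfrak{m})S\bigr)^k=f(\mathfrak{m}^k)S\subseteq f(\mathfrak{q})S.
\]
On the other hand $f(\mathfrak{q})S\subseteq f(\mathfrak{m})S\subseteq\mathfrak{n}$. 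Combining, $\mathfrak{n}^{k\ell}\subseteq f(\mathfrak{q})S\subseteq\mathfrak{n}$, hence $f(\mathfrak{q})S$ is $\mathfrak{n}$-primary.

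There is really no obstacle here; this statement is essentially the observation that ``$\mathfrak{m}$-primary'' is preserved under extension along a finite-length local homomorphism, and it follows directly from the elementary characterization above together with the multiplicativity of ideal powers. The only thing one must be careful about is to invoke locality of $f$ to get $f(\mathfrak{m})S\subseteq\mathfrak{n}$, which is needed for the upper inclusion.
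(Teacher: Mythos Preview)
Your proof is correct. The paper takes a slightly different, more ``radical-theoretic'' route: it picks a minimal prime $\mathfrak{p}$ of $f(\mathfrak{q})S$, observes that $\mathfrak{q}\subseteq f^{-1}(\mathfrak{p})$ forces $f^{-1}(\mathfrak{p})=\mathfrak{m}$ (since $\mathfrak{q}$ is $\mathfrak{m}$-primary), hence $\mathfrak{p}\supseteq f(\mathfrak{m})S$, and then the $\mathfrak{n}$-primariness of $f(\mathfrak{m})S$ forces $\mathfrak{p}=\mathfrak{n}$. Your argument instead translates both primary hypotheses into explicit power-of-maximal-ideal inclusions and chains them; this is equally elementary and has the minor bonus of yielding a concrete exponent ($\mathfrak{n}^{k\ell}\subseteq f(\mathfrak{q})S$), whereas the paper's argument identifies the radical without tracking exponents. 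Both are short and self-contained; the difference is largely stylistic.
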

\begin{proof}
Let \(\mathfrak{p}\) be a minimal prime ideal of \(f(\mathfrak{q})S\). Then
\[\mathfrak{q}\subset f^{-1}\left(f(\mathfrak{q})S\right)\subset f^{-1}(\mathfrak{p}).\]By assumption \(\mathfrak{q}\) is \(\mathfrak{m}\)-primary, so we must have \(f^{-1}(\mathfrak{p})=\mathfrak{m}\). But then \(\mathfrak{p}\supset f(\mathfrak{m})S\). Since \(f(\mathfrak{m})S\) is assumed to be \(\mathfrak{n}\)-primary, it is immediately clear that \(\mathfrak{p}=\mathfrak{n}\) and the result follows.
\end{proof}
\begin{corollary}\label{flc}
Suppose \(f:(R,\mathfrak{m})\rightarrow(S,\mathfrak{n})\) and \(g:(S,\mathfrak{n})\rightarrow(T,\mathfrak{p})\) are homomorphisms of finite length of Noetherian local rings. Then \(g\circ f\) is of finite length.
\end{corollary}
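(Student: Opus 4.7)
The plan is to verify the two conditions in Definition~\ref{deflambda} for the composition $g\circ f$: namely, that it is local, and that $(g\circ f)(\mathfrak{m})T$ is $\mathfrak{p}$-primary. The first condition is immediate because the composition of two local homomorphisms is local (if $f^{-1}(\mathfrak{n})=\mathfrak{m}$ and $g^{-1}(\mathfrak{p})=\mathfrak{n}$, then $(g\circ f)^{-1}(\mathfrak{p})=\mathfrak{m}$), so the real content is the second condition.

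For the second condition, I would take $\mathfrak{q}:=f(\mathfrak{m})S$. By the assumption that $f$ is of finite length, $\mathfrak{q}$ is an $\mathfrak{n}$-primary ideal of $S$. Now I apply Proposition~\ref{crucial} to the homomorphism $g$, which is itself of finite length, with this $\mathfrak{n}$-primary ideal $\mathfrak{q}$. The conclusion is that $g(\mathfrak{q})T=g\bigl(f(\mathfrak{m})S\bigr)T$ is $\mathfrak{p}$-primary.

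The last step is the purely formal identification
\[
g\bigl(f(\mathfrak{m})S\bigr)T \;=\; g\bigl(f(\mathfrak{m})\bigr)T \;=\; (g\circ f)(\mathfrak{m})\,T,
\]
which holds because every element of $f(\mathfrak{m})S$ is a finite sum $\sum f(m_i)s_i$, so its image under $g$ is $\sum g(f(m_i))\,g(s_i)$, and after extending to $T$ the factors $g(s_i)\in T$ can be absorbed into the ideal coefficients. Hence $(g\circ f)(\mathfrak{m})T$ is $\mathfrak{p}$-primary, and $g\circ f$ is of finite length.

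There is no real obstacle here; the corollary is essentially a reformulation of Proposition~\ref{crucial} with $\mathfrak{q}=f(\mathfrak{m})S$, and the only thing to be careful about is the bookkeeping in the ideal-extension identity above.
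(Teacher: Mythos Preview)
Your argument is correct and follows essentially the same route as the paper: set $\mathfrak{q}=f(\mathfrak{m})S$, note it is $\mathfrak{n}$-primary because $f$ is of finite length, and apply Proposition~\ref{crucial} to $g$ to conclude that $g(f(\mathfrak{m})S)T=(g\circ f)(\mathfrak{m})T$ is $\mathfrak{p}$-primary. The paper's proof is the same, only terser (it omits the explicit check of locality and the ideal-extension identity, treating both as evident).
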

\begin{proof}
Since \(f\) is of finite length, \(f(\mathfrak{m})S\) is \(\mathfrak{n}\)-primary and since \(g\) is of finite length, by Proposition~\ref{crucial} \(g(f(\mathfrak{m})S)T\) is \(\mathfrak{p}\)-primary. Thus \((g\circ f)(\mathfrak{m})T\) is \(\mathfrak{p}\)-primary, that is, \(g\circ f\) is of finite length.
\end{proof}
\begin{corollary}\label{crucial4self}
Let \((R,\mathfrak{m})\) be a Noetherian local ring, let \(\varphi\) be a self-map of finite length of \(R\). Then \(\varphi^n\) is also of finite length for all \(n\geq1\).
\end{corollary}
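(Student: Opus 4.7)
The plan is to prove this by a short induction on $n$, using Corollary~\ref{flc} as the workhorse. The base case $n=1$ is just the hypothesis that $\varphi$ is of finite length. For the inductive step, assume $\varphi^n$ is of finite length, where $\varphi^n$ is viewed as a local homomorphism $(R,\mathfrak{m})\to(R,\mathfrak{m})$. Since $\varphi^{n+1}=\varphi\circ\varphi^n$ is the composition of two finite-length local homomorphisms from $(R,\mathfrak{m})$ to itself, Corollary~\ref{flc} immediately yields that $\varphi^{n+1}$ is of finite length.

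There is essentially no obstacle, since the prior corollary was formulated precisely to handle this kind of iteration; the only point worth noting is that the source and target of $\varphi^n$ and $\varphi$ coincide, so there is no issue fitting the hypotheses of Corollary~\ref{flc}. In particular, the locality of $\varphi^n$ is preserved automatically under composition, and the condition $\varphi^n(\mathfrak{m})R$ being $\mathfrak{m}$-primary follows from the finite-length condition at each stage via Proposition~\ref{crucial}.
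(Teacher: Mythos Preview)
Your proof is correct and follows essentially the same approach as the paper: induction on $n$, with the inductive step handled by invoking Corollary~\ref{flc} on the composition. The only cosmetic difference is that the paper writes $\varphi^{n+1}=\varphi^n\circ\varphi$ while you write $\varphi^{n+1}=\varphi\circ\varphi^n$; since both factors are of finite length, either order works equally well.
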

\begin{proof}
We proceed by induction on \(n\). The statement is true for  \(n=1\) by assumption. Suppose the statement holds for \(n\), i.e., suppose \(\varphi^n\) is of finite length. Then since \(\varphi^{n+1}=\varphi^n\circ\varphi\), the result follows from Corollary~\ref{flc} and induction hypothesis.
\end{proof}
\begin{proposition}\label{product formula}
Let \(f:R\rightarrow S\) be a local homomorphism of Noetherian local rings with residue fields \(k_R\) and \(k_S\), respectively, and assume that \([\Ef k_S:k_R]<\infty\). If \(N\) is an \(S\)-module of finite length, then \(\Ef N\) is an \(R\)-module of finite length, and 
\begin{equation}\label{length behavior2}
\ell_R(\Ef N)=[\Ef k_S:k_R]\cdot\ell_S(N).
\end{equation}
\end{proposition}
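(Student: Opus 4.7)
The plan is to induct on the length $n=\ell_S(N)$. The base case $n=1$ is where the hypothesis does all the work; the inductive step is then a routine dévissage using the exactness of restriction of scalars.

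For the base case, if $\ell_S(N)=1$ then $N\cong k_S$ as $S$-modules, so $\Ef N\cong \Ef k_S$ as $R$-modules. Because $f$ is local, $f(\mathfrak{m})\subseteq\mathfrak{n}$, and therefore $\mathfrak{m}$ annihilates $\Ef k_S$. Consequently $\Ef k_S$ is naturally a $k_R$-vector space, and its $R$-submodules are precisely its $k_R$-subspaces. Its $k_R$-dimension is $[\Ef k_S:k_R]$, which is finite by hypothesis, so
\[
\ell_R(\Ef k_S)=\dim_{k_R}\Ef k_S=[\Ef k_S:k_R],
\]
establishing the formula when $\ell_S(N)=1$.

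For the inductive step, assume the statement holds for every $S$-module of length strictly less than $n$, and let $\ell_S(N)=n\geq 2$. Choose a submodule $N'\subset N$ with $N/N'\cong k_S$, giving the short exact sequence of $S$-modules
\[
0\longrightarrow N'\longrightarrow N\longrightarrow k_S\longrightarrow 0.
\]
Restriction of scalars along $f$ is exact, so the induced sequence
\[
0\longrightarrow \Ef N'\longrightarrow \Ef N\longrightarrow \Ef k_S\longrightarrow 0
\]
is exact as $R$-modules. By the induction hypothesis $\ell_R(\Ef N')=(n-1)[\Ef k_S:k_R]$, and by the base case $\ell_R(\Ef k_S)=[\Ef k_S:k_R]$; both are finite, so $\Ef N$ has finite length as an $R$-module and additivity of length gives
\[
\ell_R(\Ef N)=\ell_R(\Ef N')+\ell_R(\Ef k_S)=n\cdot[\Ef k_S:k_R]=[\Ef k_S:k_R]\cdot\ell_S(N),
\]
which completes the induction.

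The only real obstacle is the base case, and it is mild: it amounts to the observation that locality of $f$ forces $\Ef k_S$ to factor through $k_R$, so that $R$-length and $k_R$-dimension coincide. Everything else is formal, and the same argument would extend verbatim to any simple $S$-module if one had a bound on its $k_R$-dimension after restriction.
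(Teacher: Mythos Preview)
Your proof is correct and is essentially the same argument as the paper's: the paper phrases it as taking a composition series of $N$ and observing that the induced filtration of $\Ef N$ has all quotients isomorphic to $\Ef k_S$, which is your induction unwound. Your treatment of the base case is in fact more explicit than the paper's, which leaves the identity $\ell_R(\Ef k_S)=[\Ef k_S:k_R]$ implicit.
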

\begin{proof}
Consider a composition series of \(N\), \(0=N_0\subsetneq N_1\subsetneq\ldots\subsetneq N_t=N\), 
with \(N_i/N_{i-1}\cong k_S\). This gives rise to a filtration of \(\Ef N\),
\[0=\Ef N_0\subsetneq \Ef N_1\subsetneq\ldots\subsetneq \Ef N_t=\Ef N,\]
with quotients \(\Ef N_i/\Ef N_{i-1}\cong \Ef k_S\). Thus, Equation~\ref{length behavior2} follows.
\end{proof}
\begin{proposition}\label{quotients}
Let \((R,\mathfrak{m})\) be a Noetherian local ring, and let \(\varphi\) be a self-map of finite length of \(R\). Let \(\mathfrak{a}\) be an ideal of \(R\) with the property \(\varphi(\mathfrak{a})R\subset\mathfrak{a}\). Let \(\overline{\varphi}\) be the local self-map induced by \(\varphi\) on \(R/\mathfrak{a}\). Then \(\overline{\varphi}\) is of finite length, and for any \(n\in\mathbb{N}\):
\[\lambda(\overline{\varphi}^{\:n})=\ell_{R/\mathfrak{a}}\Big(\frac{R/\mathfrak{a}}{[\varphi^n(\mathfrak{m})R+\mathfrak{a}]/\mathfrak{a}}\Big)=\ell_R\Big(\frac{R}{\varphi^n(\mathfrak{m})R+\mathfrak{a}}\Big).\]
\end{proposition}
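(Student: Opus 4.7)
The plan is to unwind the definition of $\lambda(\overline{\varphi}^{\:n})$ and identify the relevant ideal on $R/\mathfrak{a}$ with its preimage on $R$; no deep input is needed beyond the basic isomorphism theorems and Corollary~\ref{crucial4self}.

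First I would check that $\overline{\varphi}$ is actually a well-defined local self-map: the hypothesis $\varphi(\mathfrak{a})R\subset\mathfrak{a}$ says $\varphi(\mathfrak{a})\subset\mathfrak{a}$, so the rule $r+\mathfrak{a}\mapsto\varphi(r)+\mathfrak{a}$ is a ring homomorphism, and it is local because $\varphi(\mathfrak{m})\subset\mathfrak{m}$ implies $\overline{\varphi}(\mathfrak{m}/\mathfrak{a})\subset\mathfrak{m}/\mathfrak{a}$. A one-line induction then shows $\varphi^n(\mathfrak{a})\subset\mathfrak{a}$ for every $n\geq 1$, so $\overline{\varphi^n}=\overline{\varphi}^{\:n}$ is well-defined.

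Next I would verify that $\overline{\varphi}$ is of finite length, from which $\overline{\varphi}^{\:n}$ inherits the property by Corollary~\ref{crucial4self}. The key computation is
\[
\overline{\varphi}(\mathfrak{m}/\mathfrak{a})\cdot (R/\mathfrak{a})=\bigl[\varphi(\mathfrak{m})R+\mathfrak{a}\bigr]/\mathfrak{a},
\]
and the third isomorphism theorem gives
\[
\frac{R/\mathfrak{a}}{[\varphi(\mathfrak{m})R+\mathfrak{a}]/\mathfrak{a}}\;\cong\;\frac{R}{\varphi(\mathfrak{m})R+\mathfrak{a}},
\]
which is a quotient of $R/\varphi(\mathfrak{m})R$ and hence of finite length. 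Consequently the image ideal is $(\mathfrak{m}/\mathfrak{a})$-primary.

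The two displayed equalities in the conclusion are then obtained by applying the same reasoning to $\varphi^n$ in place of $\varphi$: the first equality is just the definition of $\lambda(\overline{\varphi}^{\:n})$ combined with the identification above, and the second equality uses both the ring isomorphism just displayed and the fact that for any $R/\mathfrak{a}$-module $M$ one has $\ell_R(M)=\ell_{R/\mathfrak{a}}(M)$ (since a composition series of $M$ over $R$ automatically consists of $R/\mathfrak{a}$-submodules, the simple factors being $R/\mathfrak{m}=(R/\mathfrak{a})/(\mathfrak{m}/\mathfrak{a})$ either way). The main obstacle is really just keeping the quotient-of-quotient bookkeeping straight; no serious difficulty arises.
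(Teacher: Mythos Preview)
Your proposal is correct and follows essentially the same approach as the paper: identify $\overline{\varphi}^{\,n}(\mathfrak{m}/\mathfrak{a})(R/\mathfrak{a})$ with $[\varphi^n(\mathfrak{m})R+\mathfrak{a}]/\mathfrak{a}$ and then compare lengths over $R$ and $R/\mathfrak{a}$. The only cosmetic difference is that for the second equality the paper invokes Proposition~\ref{product formula} applied to the surjection $R\to R/\mathfrak{a}$ (where the residue-field extension has degree $1$), whereas you argue directly that a composition series over $R/\mathfrak{a}$ is already one over $R$; these are the same observation.
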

\begin{proof}
The claim that \(\overline{\varphi}\) is of finite length quickly follows from the definition of finite length. From the commutative diagram 
\[\begin{tikzpicture} 
\matrix (m) [matrix of math nodes, row sep=2.8em, column sep=2.3em, text height=2ex, text depth=0.25ex] { R & R \\ R/\mathfrak{a} & R/\mathfrak{a}\\}; 
\path[->, font=\scriptsize]
(m-1-1) edge node[auto] {$ \varphi^n $} (m-1-2) 
              edge (m-2-1) 
(m-2-1) edge node[auto] {$ \overline{\varphi}^{\:n} $} (m-2-2)
(m-1-2) edge (m-2-2);
\end{tikzpicture}\]
we see immediately \(\overline{\varphi}^{\:n}(\mathfrak{m}/\mathfrak{a})(R/\mathfrak{a})=[\varphi^n(\mathfrak{m})R+\mathfrak{a}]/\mathfrak{a}\). Thus, as \((R/\mathfrak{a})\)-modules
\[\frac{R/\mathfrak{a}}{\overline{\varphi}^{\:n}(\mathfrak{m}/\mathfrak{a})(R/\mathfrak{a})}\cong\frac{R/\mathfrak{a}}{[\varphi^n(\mathfrak{m})R+\mathfrak{a}]/\mathfrak{a}}.\]
This establishes the first equality. For the second equality we apply Proposition~\ref{product formula} to the local homomorphism \(f:R\rightarrow R/\mathfrak{a}\), taking \(N\) to be 
\[\frac{R/\mathfrak{a}}{[\varphi^n(\mathfrak{m})R+\mathfrak{a}]/\mathfrak{a}}\:,\] and noting that \([\Ef k_{(R/\mathfrak{a})}:k_R]=1\), and that as \(R\)-modules
\[\frac{R/\mathfrak{a}}{[\varphi^n(\mathfrak{m})R+\mathfrak{a}]/\mathfrak{a}}\cong \frac{R}{\varphi^n(\mathfrak{m})R+\mathfrak{a}}.\]
\end{proof}
\begin{corollary}\label{genandleng}
Let \((R,\mathfrak{m},k)\) be a Noetherian local ring, and let \(\varphi\) be a \emph{finite} local self-map of \(R\). Then
\[\nu(\Fin R)=[\Fi k:k]^n\cdot\lambda(\varphi^n).\]
\end{corollary}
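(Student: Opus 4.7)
The plan is to reduce $\nu(\Fin R)$ to a length computation via Nakayama's lemma, and then evaluate that length with Proposition~\ref{product formula}. Since $\varphi$ is finite, an easy induction (the composition of finite homomorphisms is finite) shows that $\Fin R$ is a finite $R$-module, so Nakayama gives
$$\nu(\Fin R) \;=\; \dim_k\!\big(\Fin R \big/ \mathfrak{m}\,\Fin R\big).$$
Unwinding the twisted scalar multiplication $r\cdot x = \varphi^n(r)x$ on $\Fin R$ shows that $\mathfrak{m}\cdot\Fin R = \varphi^n(\mathfrak{m})R$, so the quotient is canonically isomorphic to $\Fin\!\big(R/\varphi^n(\mathfrak{m})R\big)$ as an $R$-module. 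Because this quotient is annihilated by $\mathfrak{m}$, its $k$-dimension equals its $R$-length.

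Next I will apply Proposition~\ref{product formula} to the local homomorphism $\varphi^n : R \to R$, which is of finite length by Corollary~\ref{crucial4self}, with $N := R/\varphi^n(\mathfrak{m})R$ carrying its \emph{natural} $R$-structure; by definition $\ell_R(N) = \lambda(\varphi^n)$. The proposition yields
$$\ell_R\!\big(\Fin N\big) \;=\; [\Fin k : k]\cdot \lambda(\varphi^n),$$
and combining with the reduction above gives $\nu(\Fin R) = [\Fin k : k]\cdot \lambda(\varphi^n)$. It remains to identify $[\Fin k : k]$ with $[\Fi k : k]^n$.

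Write $\overline{\varphi}: k \to k$ for the induced residue-field endomorphism. Since $\Fin k$ is $k$ equipped with $k$-action through $\overline{\varphi}^{\,n}$, one has $[\Fin k : k] = [k : \overline{\varphi}^{\,n}(k)]$ and $[\Fi k : k] = [k : \overline{\varphi}(k)]$. Applying the tower law to
$$k \;\supseteq\; \overline{\varphi}(k) \;\supseteq\; \overline{\varphi}^{\,2}(k) \;\supseteq\; \cdots \;\supseteq\; \overline{\varphi}^{\,n}(k)$$
expresses $[k : \overline{\varphi}^{\,n}(k)]$ as the product of the consecutive degrees, and each such degree equals $[k : \overline{\varphi}(k)]$ because the injective field map $\overline{\varphi}^{\,i}$ is an isomorphism $k \xrightarrow{\sim} \overline{\varphi}^{\,i}(k)$ that carries $\overline{\varphi}(k)$ onto $\overline{\varphi}^{\,i+1}(k)$. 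Hence $[\Fin k : k] = [\Fi k : k]^n$, and the desired formula follows. I expect this last multiplicativity step---elementary but sensitive to the fact that $\overline{\varphi}$ need not be surjective---to be the only point requiring real attention; everything else is routine bookkeeping on top of Nakayama and Proposition~\ref{product formula}.
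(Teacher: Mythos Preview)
Your proof is correct and follows the same route as the paper: Nakayama reduces $\nu(\Fin R)$ to the $R$-length of $\Fin(R/\varphi^n(\mathfrak{m})R)$, and then Proposition~\ref{product formula} converts this to $[\Fin k:k]\cdot\lambda(\varphi^n)$. The paper simply asserts the identity $[\Fin k:k]=[\Fi k:k]^n$ without comment, whereas you supply the tower-law argument; this extra detail is sound and is the only place your write-up goes beyond the paper's.
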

\begin{proof}
By Nakayama's Lemma
\[\nu(\Fin R)=\dim_k(\Fin R/\mathfrak{m}\Fin R)=\ell_R(\Fin R/\mathfrak{m}\Fin R)=\ell_R\left(\Fi(R/\varphi^n(\mathfrak{m})R)\right).\]
The result follows from Proposition~\ref{product formula} if we note that \([\Fin k:k]=[\Fi k:k]^n\).
\end{proof}
\begin{proposition}\label{flatchange}
Let \(f:(R,\mathfrak{m})\rightarrow(S,\mathfrak{n})\) be a homomorphism of finite length of Noetherian local rings. Let \(M\) be an \(R\)-module of finite length. Then\smallskip

\begin{compactenum}
\item[\textbf{a})] \(M\otimes_RS\) is an \(S\)-module of finite length.
\item[\textbf{b})] In general \(\ell_S(M\otimes_R S)\leq\lambda(f)\cdot\ell_R(M)\).
\item[\textbf{c})] If in addition \(f\) is \emph{flat}, then \(\ell_S(M\otimes_R S)=\lambda(f)\cdot\ell_R(M)\).
\end{compactenum}
\end{proposition}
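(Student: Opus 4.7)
The plan is to exploit a composition series of $M$ as an $R$-module and transport it to a filtration of $M\otimes_R S$ via the tensor product functor; flatness is what controls whether the induced filtration remains short-exact.

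First I would pick a composition series
\[
 0 = M_0 \subsetneq M_1 \subsetneq \cdots \subsetneq M_t = M, \qquad t = \ell_R(M),
\]
with each successive quotient $M_i/M_{i-1} \cong R/\mathfrak{m}$. Tensoring each inclusion $M_{i-1}\hookrightarrow M_i$ with $S$ and then composing with the natural map into $M\otimes_R S$ yields a chain of $S$-submodules
\[
 0 = N_0 \subseteq N_1 \subseteq \cdots \subseteq N_t = M\otimes_R S,
\]
where $N_i$ denotes the image of $M_i\otimes_R S$ in $M\otimes_R S$. Right exactness of $-\otimes_R S$ applied to $0\to M_{i-1}\to M_i\to R/\mathfrak{m}\to 0$ shows that the quotient $N_i/N_{i-1}$ is a quotient of $(M_i/M_{i-1})\otimes_R S \cong (R/\mathfrak{m})\otimes_R S \cong S/f(\mathfrak{m})S$, which has length $\lambda(f)$ over $S$ by Definition~\ref{deflambda}.

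This single observation simultaneously handles (a) and (b): each graded piece $N_i/N_{i-1}$ has $\ell_S(N_i/N_{i-1}) \leq \lambda(f)$, so summing gives
\[
 \ell_S(M\otimes_R S) \;\leq\; \sum_{i=1}^{t} \ell_S(N_i/N_{i-1}) \;\leq\; t\cdot\lambda(f) \;=\; \lambda(f)\cdot\ell_R(M),
\]
which is finite, proving both that $M\otimes_R S$ has finite length and establishing the inequality in (b).

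For (c), the extra input of flatness is that each short exact sequence $0\to M_{i-1}\to M_i\to R/\mathfrak{m}\to 0$ remains short exact after tensoring with $S$. Consequently the maps $M_{i-1}\otimes_R S \to M_i\otimes_R S$ are injective, the filtration $\{N_i\}$ strictly increases in a way that the quotients are honestly $N_i/N_{i-1} \cong S/f(\mathfrak{m})S$ (not merely quotients thereof), and additivity of length along this filtration gives the desired equality $\ell_S(M\otimes_R S) = t\cdot\lambda(f) = \lambda(f)\cdot\ell_R(M)$. The only mild subtlety, and the one point worth being careful about, is the non-flat case: without flatness we only control the graded pieces as quotients of $S/f(\mathfrak{m})S$, which is exactly why (b) is an inequality and (c) requires flatness to upgrade to equality.
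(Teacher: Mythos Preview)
Your proof is correct and follows essentially the same idea as the paper's: both arguments feed a composition series of $M$ through $-\otimes_R S$ and use that each simple factor $R/\mathfrak{m}$ tensors to $S/f(\mathfrak{m})S$, with right exactness giving the inequality and flatness upgrading it to equality. The only cosmetic difference is that the paper phrases this as an induction on $\ell_R(M)$ (peeling off one simple submodule at a time), whereas you handle the entire filtration in one pass; neither gains anything substantive over the other.
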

\begin{proof}
We proceed by induction on \(\ell_R(M)\). If \(M\) is an \(R\)-module of length \(1\), then \(M\cong R/\mathfrak{m}\) and all three parts of the proposition hold because in that case
\[\ell_S\left(M\otimes_RS\right)=\ell_S\left((R/\mathfrak{m})\otimes_RS\right)=\ell_S\left(S/f(\mathfrak{m})S\right)=\lambda(f).\]
Now assume that all three parts of the proposition hold for all \(R\)-modules of length \(n_0\), and let \(M\) be an \(R\)-module with \(\ell_R(M)=n_0+1\). Let \(M^\prime\) be a submodule of \(M\) such that \(\ell_R(M^\prime)=1\) and \(\ell_R(M/M^\prime)=n_0\). Tensor the exact sequence \(0\rightarrow M^\prime\rightarrow M\rightarrow M/M^\prime\rightarrow0\) with \(S\) to obtain an exact sequence
\begin{equation}\label{someeq1}
M^\prime\otimes_RS\rightarrow M\otimes_RS\stackrel{\alpha}{\rightarrow} (M/M^\prime)\otimes_RS\rightarrow0,
\end{equation} 
As this sequence shows, there is a surjection \(M^\prime\otimes_RS\rightarrow\ker\alpha\rightarrow0\). Since \(M^\prime\otimes_RS\) is, by base of induction, of finite length as an \(S\)-module,  so is \(\ker\alpha\). As a result, \(M\otimes_RS\) is also an \(S\)-module of finite length, and the above sequence yields \(\ell_S(M\otimes_RS)\leq \ell_S(M^\prime\otimes_RS)+\ell_S\left((M/M^\prime)\otimes_RS\right)\). Now \textbf{a}) and \textbf{b}) quickly follow from this inequality by using the induction hypothesis.\par When \(f\) is flat, we can put a \(0\) on the left side of Sequence~\ref{someeq1}, and then we obtain the equality \(\ell_S(M\otimes_RS)=\ell_S(M^\prime\otimes_RS)+\ell_S\left((M/M^\prime)\otimes_RS\right)\). Now \textbf{c}) follows immediately from this equality by using the induction hypothesis.
\end{proof}
\begin{corollary}\label{proplambda}
Suppose \(f:(R,\mathfrak{m})\rightarrow(S,\mathfrak{n})\) and \(g:(S,\mathfrak{n})\rightarrow(T,\mathfrak{p})\) are homomorphisms of finite length of Noetherian local rings. Then with notation of Definition~\ref{deflambda},\smallskip 

\begin{compactenum}
\item[\textbf{a})] In general \(\lambda(g)\leq\lambda(g\circ f)\leq\lambda(g)\cdot\lambda(f)\).
\item[\textbf{b})] If in addition \(g\) is \emph{flat}, then \(\lambda(g\circ f)=\lambda(g)\cdot\lambda(f)\).
\end{compactenum}
\end{corollary}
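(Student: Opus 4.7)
The plan is to reduce both inequalities in (a) and the equality in (b) to a direct application of Proposition~\ref{flatchange}. The key observation is that $\lambda(g\circ f)$ can be reinterpreted as the length of a tensor product, which allows us to invoke Proposition~\ref{flatchange} with $g$ playing the role of the ring homomorphism.

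First I would set $M := S/f(\mathfrak{m})S$. By the definition of $\lambda(f)$, this is an $S$-module of finite length equal to $\lambda(f)$. Tensoring with $T$ over $S$ yields
\[M \otimes_S T \;\cong\; T/g(f(\mathfrak{m})S)T \;=\; T/(g\circ f)(\mathfrak{m})T,\]
so $\ell_T(M\otimes_S T) = \lambda(g\circ f)$. Since $g$ is of finite length by hypothesis and $M$ has finite length over $S$, Proposition~\ref{flatchange}(b) applied to $g$ and $M$ gives
\[\lambda(g\circ f) \;=\; \ell_T(M\otimes_S T) \;\leq\; \lambda(g)\cdot\ell_S(M) \;=\; \lambda(g)\cdot\lambda(f),\]
which establishes the upper bound in (a). For part (b), when $g$ is flat, Proposition~\ref{flatchange}(c) replaces the inequality by an equality, yielding $\lambda(g\circ f)=\lambda(g)\cdot\lambda(f)$ directly.

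For the lower bound $\lambda(g)\leq\lambda(g\circ f)$ in (a), I would argue separately and more elementarily. Since $f$ is local, $f(\mathfrak{m})\subseteq\mathfrak{n}$, so applying $g$ and extending to $T$ gives the inclusion of ideals $(g\circ f)(\mathfrak{m})T \subseteq g(\mathfrak{n})T$. This produces a surjection of $T$-modules $T/(g\circ f)(\mathfrak{m})T \twoheadrightarrow T/g(\mathfrak{n})T$, and taking lengths yields $\lambda(g) = \ell_T(T/g(\mathfrak{n})T) \leq \ell_T(T/(g\circ f)(\mathfrak{m})T) = \lambda(g\circ f)$.

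There is no real obstacle here; the entire argument is a matter of recognizing $\lambda(g\circ f)$ as $\ell_T\big((S/f(\mathfrak{m})S)\otimes_S T\big)$, after which Proposition~\ref{flatchange} does all the work. The only point requiring minor care is verifying the identification $M\otimes_S T \cong T/(g\circ f)(\mathfrak{m})T$, which follows because if $\mathfrak{m}$ is generated by $x_1,\ldots,x_r$, then $f(\mathfrak{m})S$ is generated by $f(x_1),\ldots,f(x_r)$ in $S$, and extending by $g$ to $T$ gives exactly $(g\circ f)(\mathfrak{m})T$.
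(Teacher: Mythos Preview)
Your proof is correct and follows essentially the same approach as the paper: both identify $\lambda(g\circ f)$ with $\ell_T\big((S/f(\mathfrak{m})S)\otimes_S T\big)$ and then invoke Proposition~\ref{flatchange}(b),(c) for the upper bound and the flat case, while the lower bound comes from the ideal inclusion $(g\circ f)(\mathfrak{m})T\subseteq g(\mathfrak{n})T$. The only differences are cosmetic (order of the two inequalities, and you spell out the tensor identification a bit more explicitly).
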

\begin{proof}
\textbf{a}) By Corollary~\ref{flc} \(\lambda(g\circ f)<\infty\). Since \(g(f(\mathfrak{m})S)T\subset g(\mathfrak{n})T\), we see 
\[\ell_T\big(T/g(\mathfrak{n})T\big)\leq \ell_T\big(T/g(f(\mathfrak{m})S)T\big),\] that is, \(\lambda(g)\leq\lambda(g\circ f)\). For the second inequality we note that there is a canonical \(T\)-module isomorphism
\[\frac{T}{g\left(f(\mathfrak{m})\right)T}\cong\frac{S}{f(\mathfrak{m})S}\otimes_ST.\]
Thus, using Proposition~\ref{flatchange}-\textbf{b}
\begin{equation}\label{onemore}
\lambda(g\circ f)=\ell_T\Big(\frac{S}{f(\mathfrak{m})S}\otimes_ST\Big)\leq\lambda(g)\cdot\ell_S\Big(\frac{S}{f(\mathfrak{m})S}\Big)=\lambda(g)\cdot\lambda(f).
\end{equation}
\textbf{b}) If \(g\) is flat, then by Proposition~\ref{flatchange}-\textbf{c} the inequality in Equation~\ref{onemore} turns into an equality, and the result follows immediately.
\end{proof}
\section{Algebraic entropy of local self-maps}\label{Defin}
The lemma that follows is well-known in dynamical systems (see, for example~\cite[Theorem~4.9, p.~87]{Walters}).
\begin{lemma}\label{subadditively convergent}
Let \(\{a_n\}\) and \(\{b_n\}\) be sequences of real numbers which satisfy the following conditions:\smallskip

\noindent\begin{compactenum}
\item[\emph{\textbf{a})}] For all \(n\in\mathbb{N}\), \(a_n,b_n\geq1\) and \(\{\sqrt[n]{a_n}\}\) is bounded above;
\item[\emph{\textbf{b})}] For any \(n,m\in\mathbb{N}\), \(a_{n+m}\geq a_n\cdot a_m\), and \(b_{n+m}\leq b_n\cdot b_m\), respectively.
\end{compactenum}\smallskip 

\noindent Then the sequences \(\{(\log a_n)/n\}\) and \(\{(\log b_n)/n\}\) are both convergent. Moreover, 
\[\{(\log a_n)/n\}\rightarrow\sup_n\{(\log a_n)/n\}\ \ \mathrm{and}\ \ \{(\log b_n)/n\}\rightarrow \inf_n\{(\log b_n)/n\}.\]
\end{lemma}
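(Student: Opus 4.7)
This is Fekete's lemma in its submultiplicative and supermultiplicative forms, so the plan is to adapt the standard division-with-remainder argument to each case.

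For the sequence $\{b_n\}$, I would set $L:=\inf_n (\log b_n)/n$. Since $b_n\geq 1$, each term $(\log b_n)/n$ is nonnegative, so $L\in[0,\infty)$ is finite. Given $\varepsilon>0$, choose $m\in\mathbb{N}$ with $(\log b_m)/m<L+\varepsilon$. For any $n>m$, write $n=qm+r$ with $0\leq r<m$. Applying the submultiplicative inequality repeatedly yields $b_n\leq b_m^{\,q}\cdot b_r$, where I interpret $b_0$ as $1$. Taking logarithms and dividing by $n$ gives
\[
\frac{\log b_n}{n}\;\leq\;\frac{qm}{n}\cdot\frac{\log b_m}{m}+\frac{\log b_r}{n}.
\]
As $n\to\infty$ with $m$ fixed, $qm/n\to 1$ and $\log b_r/n\to 0$ because $\log b_r$ is bounded by $\max\{\log b_0,\ldots,\log b_{m-1}\}$. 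Hence $\limsup_n(\log b_n)/n\leq L+\varepsilon$. Letting $\varepsilon\to 0$, combined with $\liminf_n(\log b_n)/n\geq L$ from the definition of $L$, produces convergence to the infimum.

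For the sequence $\{a_n\}$, I would set $L:=\sup_n(\log a_n)/n$. Here the boundedness hypothesis $\{\sqrt[n]{a_n}\}$ bounded above is exactly what guarantees $L<\infty$, while $a_n\geq 1$ makes $L\geq 0$. Given $\varepsilon>0$, pick $m$ with $(\log a_m)/m>L-\varepsilon$, write $n=qm+r$, and use supermultiplicativity to get $a_n\geq a_m^{\,q}\cdot a_r$. Since $a_r\geq 1$ forces $\log a_r\geq 0$, one obtains
\[
\frac{\log a_n}{n}\;\geq\;\frac{qm}{n}\cdot\frac{\log a_m}{m},
\]
so $\liminf_n(\log a_n)/n\geq L-\varepsilon$. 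Combining with $\limsup_n(\log a_n)/n\leq L$ from the definition of supremum and letting $\varepsilon\to 0$ establishes convergence to the supremum.

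I do not anticipate any serious obstacle: the only delicate point is making sure the remainder term $\log b_r/n$ (respectively $\log a_r/n$) is genuinely negligible, which works because $r<m$ ranges over a finite set and the inequalities $a_r,b_r\geq 1$ prevent pathological sign issues. The role of the boundedness hypothesis on $\{\sqrt[n]{a_n}\}$ is simply to guarantee that the supremum defining $L$ in the superadditive case is finite, so that the argument is not vacuous.
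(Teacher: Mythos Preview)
Your proposal is correct and follows essentially the same division-with-remainder argument as the paper: fix an index realizing the infimum (or supremum) up to $\varepsilon$, write $n=qm+r$, apply sub/supermultiplicativity to bound by a power of the $m$-th term plus a remainder, and let $n\to\infty$. The only cosmetic difference is that you phrase the conclusion via $\limsup$/$\liminf$, whereas the paper writes out explicit inequalities of the form $(\log a_n)/n\geq \alpha-2\varepsilon$ for large $n$; the content is identical.
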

\begin{proof}
Let \[\alpha:=\sup_n\:\left\{\frac{\log a_n}{n}\right\}\ \ \mathrm{(respectively}\ \ \beta:=\inf_n\:\left\{\frac{\log b_n}{n}\right\}\mathrm{)}.\] Note that by assumption \textbf{a}) \(\alpha\) is a non negative real number. Also, by assumption \textbf{b}) we have \(b_n\leq(b_1)^n\) for all \(n\geq1\). Thus \(\beta\) is also a non negative real number. Now for every \(\varepsilon>0\) there exists \(n_0\) such that \((\log a_{n_0})/n_0\geq\alpha-\varepsilon\) (respectively \((\log b_{n_0})/n_0\leq\beta+\varepsilon\)). Given an integer \(n>n_0\), let us write \(n=n_0q+r\), with \(0\leq r<n_0\). Then using our assumptions (c) and (a) we have
\[a_n\geq a_{n_0q}\cdot a_r\geq a_{n_0q}\geq (a_{n_0})^q\]
\[\mathrm{(respectively,\ }b_n\leq b_{n_0q}\cdot b_r\leq (b_{n_0})^q\cdot b_r.)\] From these inequalities we can deduce
\[\frac{\log a_n}{n}\geq\frac{qn_0}{n}\cdot\frac{\log a_{n_0}}{n_0}\geq\frac{qn_0}{n}\cdot(\alpha-\varepsilon)=\frac{n_0}{n_0+r/q}\cdot(\alpha-\varepsilon)\]
\[\mathrm{(respectively}\ \ \frac{\log b_n}{n}\leq\frac{qn_0}{n}\cdot\frac{\log b_{n_0}}{n_0}+\frac{\log b_r}{n}\leq\frac{qn_0}{n}\cdot(\beta+\varepsilon)+\frac{\log b_r}{n}=\]\[\frac{n_0}{n_0+r/q}\cdot(\beta+\varepsilon)+\frac{\log b_r}{n}\mathrm{)}.\]
Hence, if we take \(n\) large enough so that 
\[\frac{n_0}{n_0+r/q}\geq\frac{\alpha-2\varepsilon}{\alpha-\varepsilon}\ \ \mathrm{(respectively}\ \ \frac{\log b_r}{n}\leq\varepsilon\ \mathrm{for\ all}\ 0\leq r<n_0\mathrm{)},\] then we will get \((\log a_n)/n\geq(\alpha-2\varepsilon)\) (respectively, since \(\displaystyle{\frac{n_0}{n_0+r/q}}\leq1\), we will get
\[\frac{\log b_n}{n}\leq(\beta+\varepsilon)+\varepsilon=\beta+2\varepsilon.\mathrm{)}\] These inequalities together with the definition of \(\alpha\) and \(\beta\) show that
\[\lim_{n\rightarrow\infty}\frac{\log a_n}{n}=\alpha\ \ \mathrm{and}\ \ \lim_{n\rightarrow\infty}\frac{\log b_n}{n}=\beta.\]
\end{proof}
\begin{theorem}\label{main existence}
Let \((R,\mathfrak{m})\) be a Noetherian local ring, let \(\varphi\) be a self-map of finite length of \(R\). Then the sequence \(\{(\log\lambda(\varphi^n))/n\}\) converges to its infimum.
\end{theorem}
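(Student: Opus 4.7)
The plan is to reduce the claim directly to the $b_n$-branch of Lemma~\ref{subadditively convergent} by setting $b_n := \lambda(\varphi^n)$. The only real work is to verify the two hypotheses, namely that $b_n \geq 1$ for every $n$ and that the sequence is submultiplicative in the sense $b_{n+m} \leq b_n \cdot b_m$. Neither step is deep; they are just repackagings of facts already established in Section~\ref{Prelim}.

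First I would observe, using Corollary~\ref{crucial4self}, that $\varphi^n$ is a self-map of finite length of $R$ for every $n \geq 1$, so by Definition~\ref{deflambda} the quantity $b_n = \lambda(\varphi^n) = \ell_R\bigl(R/\varphi^n(\mathfrak{m})R\bigr)$ is a well-defined real number in $[1,\infty)$; in particular $b_n \geq 1$. Next, for submultiplicativity I would write $\varphi^{n+m} = \varphi^n \circ \varphi^m$ and apply Corollary~\ref{proplambda}-\textbf{a} with $f := \varphi^m$ and $g := \varphi^n$ (both of finite length, as just noted), which yields exactly
\[
\lambda(\varphi^{n+m}) = \lambda(\varphi^n \circ \varphi^m) \leq \lambda(\varphi^n)\cdot\lambda(\varphi^m) = b_n \cdot b_m.
\]

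With both hypotheses in hand, Lemma~\ref{subadditively convergent} (applied to the $b_n$ sequence) immediately gives that $\{(\log b_n)/n\} = \{(\log \lambda(\varphi^n))/n\}$ converges to $\inf_n\{(\log \lambda(\varphi^n))/n\}$, which is the desired conclusion. There is no serious obstacle here — everything needed has been prepared in Section~\ref{Prelim}, and the theorem is essentially a packaging result that lets the subadditivity lemma do the analytic work. The only point worth double-checking is that Corollary~\ref{proplambda}-\textbf{a} does not require any flatness hypothesis for the inequality (only for the equality in part \textbf{b}), so the bound applies unconditionally to iterates of an arbitrary finite-length self-map.
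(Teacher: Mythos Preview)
Your proposal is correct and follows essentially the same approach as the paper: set $b_n=\lambda(\varphi^n)$, check $b_n\geq1$ and submultiplicativity via Corollary~\ref{proplambda}, then invoke Lemma~\ref{subadditively convergent}. Your write-up is in fact slightly more careful in citing Corollary~\ref{crucial4self} to ensure each $\lambda(\varphi^n)$ is well-defined, which the paper leaves implicit.
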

\begin{proof}
We apply Lemma~\ref{subadditively convergent} to prove this theorem, taking \(b_n=\lambda(\varphi^n)\). We need to verify that conditions of Lemma~\ref{subadditively convergent} hold.  It is clear that \(\lambda(\varphi^n)\geq1\). Moreover, the inequality \(\lambda(\varphi^{m+n})\leq \lambda(\varphi^m)\cdot \lambda(\varphi^n)\) holds by Corollary~\ref{proplambda}. Hence, by Lemma~\ref{subadditively convergent} the sequence \(\{(\log\lambda(\varphi^n))/n\}\) converges to its infimum.
\end{proof}
\begin{definition}
Let \((R,\mathfrak{m})\) be a Noetherian local ring, let \(\varphi\) be a self-map of finite length of \(R\). We define the \emph{algebraic entropy} of \(\varphi\) as 
\[h_{\mathrm{alg}}(\varphi,R):=\lim_{n\rightarrow\infty}\frac{\log\lambda(\varphi^n)}{n}.\]
\end{definition}
Calling a quantity entropy requires justification. In our case, notable analogies between $h_{\mathrm{alg}}(f,R)$ and topological entropy serve to justify our terminology. 
\begin{example}(Dimension zero)\label{dimzero}
Let \((R,\mathfrak{m})\) be a Noetherian local ring of dimension zero, and let \(\varphi\) be a local self-map of \(R\). Then \(R\) is Artinian and
\[1\leq\lambda(\varphi^n)\leq\ell(R)<\infty.\]
Applying logarithm, dividing by \(n\) and letting \(n\) approach infinity, we see that \(h_{\mathrm{alg}}(\varphi,R)=0\). Thus, the algebraic entropy of any local self-map of a Noetherian local ring of dimension zero is \(0\).
\end{example}
\begin{example}(Frobenius)\label{FrobEnd}
Let \((R,\mathfrak{m})\) be a Noetherian local ring of dimension \(d>0\) and of characteristic \(p>0\), and let \(\varphi\) be the Frobenius endomorphism of \(R\). Then, by~\cite[Proposition~3.2, p.~777]{Kunz1}
\[p^{nd}\leq\lambda(\varphi^n)\leq\min_{\{y_1,\ldots,y_d\}}\left[\ell_R\left(R/(y_1,\ldots,y_d)R\right)\right]\cdot p^{nd},\]
where \(\{y_1,\ldots,y_d\}\) runs over all systems of parameters of \(R\). Applying logarithm, dividing by \(n\) and letting \(n\) approach infinity we see that \(h_{\mathrm{alg}}(\varphi,R)=d\cdot\log p\). 
\end{example}
Following a number of ideas of Kunz, in~\cite{Monsk1} Monsky defined the Hilbert-Kunz multiplicity for the Frobenius endomorphism of Noetherian local rings of positive characteristic. He then showed that in this case, Hilbert-Kunz multiplicity always exists. It is not easy to compute Hilbert-Kunz multiplicities . It often takes on non-integer values. It is not even known whether it is always a rational number. Nevertheless, it has become evident through works of various authors, that it provides a reasonable measure of the singularity of \(R\). Inspired by Example~\ref{FrobEnd}, here we propose a characteristic-free interpretation of the definition of Hilbert-Kunz multiplicity. 
\begin{definition}[Hilbert-Kunz multiplicity]\label{HKM}
Let $R$ be a Noetherian local ring of arbitrary characteristic, and let $\varphi$ be a self-map of finite length of $R$. Let $$p(\varphi,R):=\big(\exp(h_{\mathrm{alg}}(\varphi,R))\big)^{1/\dim R}.$$ Then the Hilbert-Kunz multiplicity of \(R\) with respect to \(\varphi\) is defined as
\begin{equation}\label{HK}
e_{\mathrm{HK}}(\varphi,R):=\lim_{n\rightarrow\infty}\frac{\lambda(\varphi^n)}{p(\varphi,R)^{nd}},
\end{equation}
provided that the limit exists.
\end{definition}
\begin{remark}\label{HKMA}
In the general case, we do not know whether the limit in~\ref{HK} always exists or not. 
\end{remark}
\begin{proposition}
Let \((R,\mathfrak{m})\) be a Noetherian local ring, let \(\varphi\) be a self-map of finite length of \(R\). If \(\varphi(\mathfrak{m})R\neq\mathfrak{m}\), then \[\lim_{n\rightarrow\infty}\frac{1}{n}\cdot\log\ell_R\left(\mathfrak{m}/\varphi^n(\mathfrak{m})R\right)=h_{\mathrm{alg}}(\varphi,R).\]
\end{proposition}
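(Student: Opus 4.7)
The plan is to reduce the statement to the fact that $\lambda(\varphi^n) = \ell_R(R/\varphi^n(\mathfrak{m})R)$ differs from $\ell_R(\mathfrak{m}/\varphi^n(\mathfrak{m})R)$ only by the constant $1$, and that adding $1$ inside a logarithm is negligible after dividing by $n$ and passing to the limit, provided that $\ell_R(\mathfrak{m}/\varphi^n(\mathfrak{m})R)$ stays bounded away from $0$.

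First I would write down the short exact sequence of $R$-modules
\begin{equation*}
0 \longrightarrow \mathfrak{m}/\varphi^n(\mathfrak{m})R \longrightarrow R/\varphi^n(\mathfrak{m})R \longrightarrow R/\mathfrak{m} \longrightarrow 0,
\end{equation*}
which (since all three modules have finite length by Corollary~\ref{crucial4self}) yields the identity $\lambda(\varphi^n) = \ell_R(\mathfrak{m}/\varphi^n(\mathfrak{m})R) + 1$. Setting $a_n := \ell_R(\mathfrak{m}/\varphi^n(\mathfrak{m})R)$, the problem becomes showing $\lim_n (\log a_n)/n = \lim_n (\log(a_n+1))/n = h_{\mathrm{alg}}(\varphi,R)$.

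The key use of the hypothesis $\varphi(\mathfrak{m})R \neq \mathfrak{m}$ is to guarantee that $a_n \geq 1$ for every $n \geq 1$. For this I would observe that since $\varphi$ is local, $\varphi^{n}(\mathfrak{m}) \subseteq \mathfrak{m}$, hence $\varphi^{n+1}(\mathfrak{m})R = \varphi(\varphi^{n}(\mathfrak{m}))R \subseteq \varphi(\mathfrak{m})R$. Combined with the assumption $\varphi(\mathfrak{m})R \subsetneq \mathfrak{m}$, this gives $\varphi^{n}(\mathfrak{m})R \subsetneq \mathfrak{m}$ for all $n\geq 1$, so $a_n \geq 1$.

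With $a_n \geq 1$ in hand, the estimate
\begin{equation*}
\frac{\log a_n}{n} \;\leq\; \frac{\log(a_n+1)}{n} \;=\; \frac{\log a_n}{n} + \frac{\log(1+1/a_n)}{n} \;\leq\; \frac{\log a_n}{n} + \frac{\log 2}{n}
\end{equation*}
squeezes the two sequences together as $n \to \infty$. Since $(\log(a_n+1))/n = (\log \lambda(\varphi^n))/n$ converges to $h_{\mathrm{alg}}(\varphi,R)$ by Theorem~\ref{main existence}, the conclusion follows. The only subtle point, and really the only place where the hypothesis is used, is step two (ensuring $a_n \geq 1$ for all $n$); without it, the sequence $(\log a_n)/n$ could be ill-defined or misbehave at finitely many indices.
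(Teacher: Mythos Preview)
Your proof is correct and follows essentially the same approach as the paper: the same short exact sequence, the same reduction to $\lambda(\varphi^n)=a_n+1$, and the same use of the hypothesis to ensure $a_n\geq 1$ (equivalently $\lambda(\varphi^n)\geq 2$). The paper phrases the final squeeze as $\tfrac{1}{2}\lambda(\varphi^n)\leq \lambda(\varphi^n)-1\leq \lambda(\varphi^n)$ rather than your $\log(1+1/a_n)\leq\log 2$ bound, but this is the same estimate in different clothing; if anything, your justification that $\varphi^n(\mathfrak{m})R\subsetneq\mathfrak{m}$ for all $n\geq 1$ is slightly more explicit than the paper's.
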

\begin{proof}
From the exact sequence
\[0\rightarrow\mathfrak{m}/\varphi^n(\mathfrak{m})R\rightarrow R/\varphi^n(\mathfrak{m})R\rightarrow R/\mathfrak{m}\rightarrow0\]
we see that \[\ell_R\left(\mathfrak{m}/\varphi^n(\mathfrak{m})R\right)=\ell_R\left(R/\varphi^n(\mathfrak{m})R\right)-\ell_R\left(R/\mathfrak{m}\right)=\ell_R\left(R/\varphi^n(\mathfrak{m})R\right)-1.\]
Since \(\varphi(\mathfrak{m})R\neq\mathfrak{m}\), we have \(\lambda(\varphi^n)=\ell_R\left(R/\varphi^n(\mathfrak{m})R\right)\geq2\). Therefore
\[\frac{1}{2}\:\lambda(\varphi^n)\leq\lambda(\varphi^n)-1=\ell_R\left(\mathfrak{m}/\varphi^n(\mathfrak{m})R\right)\leq\lambda(\varphi^n).\]
We get the result by applying logarithm, dividing by \(n\) and letting \(n\) approach infinity.
\end{proof}
\begin{definition}[\cite{AvrIynMill06}, p.~80]\label{contracting}
A local self-map \(\varphi\) of a local ring \((R,\mathfrak{m})\) is said to be \emph{contracting}, if for each element \(x\in\mathfrak{m}\) the sequence \(\{\varphi^n(x)\}_{i\geq1}\) converges to \(0\) in the \(\mathfrak{m}\)-adic topology of \(R\).
\end{definition}
\begin{remark}[\cite{AvrIynMill06}, Lemma~12.1.4, p.~81]\label{alternative}
A local self-map \(\varphi\) of a Noetherian local ring \((R,\mathfrak{m})\) is contracting if and only if \(\varphi^{\edim(R)}(\mathfrak{m})\subset\mathfrak{m}^2\), where \(\edim(R)\) is the embedding dimension of \(R\).
\end{remark}
The following proposition shows that when \(\varphi\) is a contracting self-map in the sense of Definition~\ref{contracting}, then we can compute \(h_{\mathrm{alg}}(\varphi,R)\) using any ideal of definition of the ring.
\begin{proposition}\label{anyideal}
Let \((R,\mathfrak{m})\) be a Noetherian local ring, let \(\varphi\) be a \emph{contracting} self-map of finite length of \(R\). Let \(\mathfrak{q}\) be an \(\mathfrak{m}\)-primary ideal of \(R\). Then
\[\lim_{n\rightarrow\infty}\frac{1}{n}\cdot\log\ell\left(R/\varphi^n(\mathfrak{q})R\right)=h_{\mathrm{alg}}(\varphi,R).\]
\end{proposition}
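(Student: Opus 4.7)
The plan is to sandwich $\ell_R(R/\varphi^n(\mathfrak{q})R)$ between $\lambda(\varphi^n)$ and a constant (independent of $n$) times $\lambda(\varphi^n)$, so that after taking $\log$ and dividing by $n$ the outer terms share the same limit $h_{\mathrm{alg}}(\varphi,R)$.

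First I would exploit that $\mathfrak{q}$ is $\mathfrak{m}$-primary to fix an integer $t\geq 1$ with $\mathfrak{m}^t\subseteq\mathfrak{q}\subseteq\mathfrak{m}$. A direct check on generators shows $\varphi^n(\mathfrak{m}^t)R=\bigl(\varphi^n(\mathfrak{m})R\bigr)^t$, so writing $\mathfrak{a}_n:=\varphi^n(\mathfrak{m})R$ one obtains the chain
\[\mathfrak{a}_n^t\subseteq\varphi^n(\mathfrak{q})R\subseteq\mathfrak{a}_n,\]
whose ideals are all $\mathfrak{m}$-primary by Corollary~\ref{crucial4self} and Proposition~\ref{crucial}. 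Taking colengths yields
\[\lambda(\varphi^n)\leq\ell_R\bigl(R/\varphi^n(\mathfrak{q})R\bigr)\leq\ell_R(R/\mathfrak{a}_n^t).\]

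The substantive step is then bounding $\ell_R(R/\mathfrak{a}_n^t)$ by an $n$-independent multiple of $\lambda(\varphi^n)$. The key observation is that if $\mathfrak{m}=(x_1,\ldots,x_e)$ with $e=\edim R$, then $\mathfrak{a}_n=(\varphi^n(x_1),\ldots,\varphi^n(x_e))R$, so $\nu(\mathfrak{a}_n)\leq e$ \emph{uniformly in $n$}. Filtering $R/\mathfrak{a}_n^t$ by $\{\mathfrak{a}_n^i/\mathfrak{a}_n^{i+1}\}_{i=0}^{t-1}$ and noting that each successive quotient is an $R/\mathfrak{a}_n$-module generated by at most the $\binom{e+i-1}{i}$ degree-$i$ monomials in $\varphi^n(x_1),\ldots,\varphi^n(x_e)$, the hockey-stick identity gives
\[\ell_R(R/\mathfrak{a}_n^t)\leq\left(\sum_{i=0}^{t-1}\binom{e+i-1}{i}\right)\lambda(\varphi^n)=\binom{e+t-1}{t-1}\lambda(\varphi^n).\]
Setting $C:=\binom{e+t-1}{t-1}$, a constant depending only on $R$ and $\mathfrak{q}$, we finally have
\[\frac{\log\lambda(\varphi^n)}{n}\leq\frac{\log\ell_R(R/\varphi^n(\mathfrak{q})R)}{n}\leq\frac{\log C}{n}+\frac{\log\lambda(\varphi^n)}{n},\]
and by Theorem~\ref{main existence} both outer expressions tend to $h_{\mathrm{alg}}(\varphi,R)$ as $n\to\infty$, yielding the stated limit.

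The main obstacle is establishing the uniform bound on $\nu(\mathfrak{a}_n)$; once it is in hand, the filtration argument is routine. Worth noting: this route does not seem to require the contracting hypothesis, only that $\varphi$ is local of finite length. The contracting assumption is presumably included either for contextual consistency with the preceding Definition~\ref{contracting} and Remark~\ref{alternative}, or because the author has in mind an alternative proof in which the condition $\varphi^{\edim R}(\mathfrak{m})\subseteq\mathfrak{m}^2$ plays a genuine role (for instance, by comparing $\varphi^n(\mathfrak{q})R$ to suitable powers of $\mathfrak{m}$ through the contracting dynamics).
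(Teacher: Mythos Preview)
Your argument is correct, and it takes a genuinely different route from the paper's. The paper exploits the contracting hypothesis directly: since $\mathfrak{q}$ is $\mathfrak{m}$-primary and $\varphi$ is contracting, there is an integer $s$ with $\varphi^s(\mathfrak{m})\subset\mathfrak{q}\subset\mathfrak{m}$; applying $\varphi^n$ gives $\varphi^{n+s}(\mathfrak{m})R\subset\varphi^n(\mathfrak{q})R\subset\varphi^n(\mathfrak{m})R$, so $\ell_R(R/\varphi^n(\mathfrak{q})R)$ is squeezed between $\lambda(\varphi^n)$ and $\lambda(\varphi^{n+s})$, and the limit follows after writing $1/n=(1/(n+s))\cdot((n+s)/n)$. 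No filtration or counting of generators is needed. Your approach instead squeezes $\varphi^n(\mathfrak{q})R$ between $(\varphi^n(\mathfrak{m})R)^t$ and $\varphi^n(\mathfrak{m})R$, then uses the uniform bound $\nu(\varphi^n(\mathfrak{m})R)\leq\edim R$ together with the graded filtration to control $\ell_R(R/(\varphi^n(\mathfrak{m})R)^t)$ by a constant multiple of $\lambda(\varphi^n)$. This is slightly longer but, as you observed, it never invokes the contracting hypothesis and therefore actually proves a stronger statement than the one asserted. The paper's proof is shorter and more in keeping with the surrounding material (Remark~\ref{alternative}), while yours buys extra generality: the conclusion holds for any self-map of finite length.
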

\begin{proof}
Since \(\mathfrak{q}\) is \(\mathfrak{m}\)-primary and \(\varphi\) is contracting, by Remark~\ref{alternative} there is an integer \(s\) such that \(\varphi^s(\mathfrak{m})\subset\mathfrak{q}\subset\mathfrak{m}\). Applying \(\varphi^n\) and extending the results into ideals of \(R\) we obtain
\[\varphi^{n+s}(\mathfrak{m})R\subset\varphi^n(\mathfrak{q})R\subset\varphi^n(\mathfrak{m})R.\]Thus
\[\left(\lambda(\varphi^n)\right)^{1/n}\leq\left(\ell_R\left(R/\varphi^n(\mathfrak{q})R\right)\right)^{1/n}\leq\left(\lambda(\varphi^{n+s})\right)^{[1/(n+s)]\cdot[(n+s)/n]}.\]
We obtain the result by applying logarithm and letting \(n\) approach infinity.
\end{proof}
\section{General properties of algebraic entropy}\label{Firstpro}
\begin{lemma}\label{essence} 
Let \(f:(R,\mathfrak{m})\rightarrow(S,\mathfrak{n})\) be a flat homomorphism of finite length of Noetherian local rings. Let \(\varphi\) be a self-map of finite length of \(R\), and assume that \(\varphi\) can be extended to a self-map \(\psi\) of \(S\), in such a way that the following diagram commutes
\[\begin{tikzpicture} 
\matrix (m) [matrix of math nodes, row sep=2.7em, column sep=2.9em, text height=2ex, text depth=0.25ex] { R & R \\ S & S.\\}; 
\path[->, font=\scriptsize]
(m-1-1) edge node[auto] {$ \varphi $} (m-1-2) 
              edge node [left] {$ f $} (m-2-1) 
(m-2-1) edge node[auto] {$ \psi $} (m-2-2)
(m-1-2) edge node [auto] {$ f $}  (m-2-2);
\end{tikzpicture}\]
Then \(h_{\mathrm{alg}}(\varphi,R)=h_{\mathrm{alg}}(\psi,S)\).
\end{lemma}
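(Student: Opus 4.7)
The plan is to compare $\lambda(\varphi^n)$ with $\lambda(\psi^n)$ through the mixed composition $f\circ\varphi^n=\psi^n\circ f$, and then pass to the limit. A simple induction on $n$ using the commutativity of the square shows $f\circ\varphi^n=\psi^n\circ f$ for every $n\geq1$. Both compositions are of finite length by Corollary~\ref{flc} (after invoking Corollary~\ref{crucial4self} to know $\varphi^n$ and $\psi^n$ themselves are of finite length), so in particular
\[
\lambda(f\circ\varphi^n) \;=\; \lambda(\psi^n\circ f).
\]

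Next I would compute each side of this equality separately. On one side, using the canonical $S$-module isomorphism
\[
(R/\varphi^n(\mathfrak{m})R)\otimes_R S \;\cong\; S/(f\circ\varphi^n)(\mathfrak{m})S,
\]
together with the flatness of $f$ and Proposition~\ref{flatchange}(\textbf{c}) applied to $M=R/\varphi^n(\mathfrak{m})R$, one gets $\lambda(f\circ\varphi^n)=\lambda(f)\cdot\lambda(\varphi^n)$. On the other side, Corollary~\ref{proplambda}(\textbf{a}) applied to the pair $(f,\psi^n)$ yields the two-sided bound $\lambda(\psi^n)\leq\lambda(\psi^n\circ f)\leq\lambda(\psi^n)\cdot\lambda(f)$. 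Combining these,
\[
\lambda(\psi^n) \;\leq\; \lambda(f)\cdot\lambda(\varphi^n) \;\leq\; \lambda(\psi^n)\cdot\lambda(f),
\]
and after dividing by $\lambda(f)$ one obtains the sandwich
\[
\frac{\lambda(\psi^n)}{\lambda(f)} \;\leq\; \lambda(\varphi^n) \;\leq\; \lambda(\psi^n).
\]

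Taking $\log$, dividing by $n$, and letting $n\to\infty$, the constant term $(\log\lambda(f))/n$ tends to zero, and by Theorem~\ref{main existence} both outer quantities converge to $h_{\mathrm{alg}}(\psi,S)$; hence $h_{\mathrm{alg}}(\varphi,R)=h_{\mathrm{alg}}(\psi,S)$. There is no serious obstacle to this plan; the one piece of insight needed is the idea of computing the single number $\lambda(\psi^n\circ f)=\lambda(f\circ\varphi^n)$ in two different ways, once using flatness of $f$ (which gives an equality) and once using the purely general submultiplicative/supermultiplicative estimate (which gives the comparison with $\lambda(\psi^n)$). Flatness of $f$ is what ensures the lower as well as the upper bound on $\lambda(\varphi^n)$ in terms of $\lambda(\psi^n)$, and is therefore essential to the conclusion.
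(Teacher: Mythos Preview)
Your argument is essentially identical to the paper's: both compute $\lambda(\psi^n\circ f)=\lambda(f\circ\varphi^n)$ in two ways, using flatness of $f$ for the equality $\lambda(f\circ\varphi^n)=\lambda(f)\cdot\lambda(\varphi^n)$ and the general bounds of Corollary~\ref{proplambda} for the comparison with $\lambda(\psi^n)$, arriving at the same sandwich $\lambda(\varphi^n)\leq\lambda(\psi^n)\leq\lambda(f)\cdot\lambda(\varphi^n)$. The only point you skip is verifying at the outset that $\psi$ is local and of finite length (the hypotheses do not assume this, and you need it before invoking Corollary~\ref{crucial4self} for $\psi^n$); the paper handles this in one line from $\psi(\mathfrak{n})S\supseteq\psi(f(\mathfrak{m}))S=(f\circ\varphi)(\mathfrak{m})S$.
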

\begin{proof}
We first show \(\lambda(\psi)<\infty\). The commutativity of the diagram shows that \(\psi\) must be a local map. By Corollary~\ref{flc} the map \(f\circ\varphi\) is of finite length. Then from the commutativity of the diagram it follows that \(\psi\circ f\) is also of finite length. Since \(\psi({\mathfrak{n}})S\supset\psi\left(f(\mathfrak{m})S\right)S\) and since \(\psi\circ f\) is of finite length, it follows that \(\psi({\mathfrak{n}})S\) is \(\mathfrak{n}\)-primary, that is, \(\psi\) is of finite length.\par By Corollary~\ref{crucial4self} \(\varphi^n\) and \(\psi^n\) are also of finite length. Noting that \(\psi^n\circ f=f\circ\varphi^n\) and using the flatness of \(f\), by Corollary~\ref{proplambda} we obtain
\begin{eqnarray}
\lambda(\varphi^n)=\lambda(f)\cdot\lambda(\varphi^n)/\lambda(f)&=&\lambda(f\circ\varphi^n)/\lambda(f)=\lambda(\psi^n\circ f)/\lambda(f)\leq\lambda(\psi^n)\nonumber\\ &\leq&\lambda(\psi^n\circ f)=\lambda(f\circ\varphi^n)=\lambda(f)\cdot\lambda(\varphi^n).\nonumber
\end{eqnarray}
Hence \(\lambda(\varphi^n)\leq\lambda(\psi^n)\leq\lambda(f)\cdot\lambda(\varphi^n)\). The result follows immediately from these inequalities by taking logarithms, dividing by \(n\), and letting \(n\) approach infinity.
\end{proof}
\begin{corollary}
Let \((R,\mathfrak{m})\) be a Noetherian local ring, and let \(\varphi\) be a self-map of finite length of \(R\). If \(\widehat{R}\) is the \(\mathfrak{m}\)-adic completion of \(R\) then
\[h_{\mathrm{alg}}(\varphi,R)=h_{\mathrm{alg}}(\widehat{\varphi},\widehat{R}).\]
\end{corollary}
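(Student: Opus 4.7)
The plan is to deduce this corollary directly from Lemma~\ref{essence} applied to the completion map $\iota \colon R \to \widehat{R}$. Three things need to be checked: that $\iota$ is a flat local homomorphism of finite length, that $\varphi$ extends to a self-map $\widehat{\varphi}$ of $\widehat{R}$ in a way that commutes with $\iota$, and that $\widehat{\varphi}$ is itself of finite length. Once all three are in place, Lemma~\ref{essence} gives $h_{\mathrm{alg}}(\varphi,R)=h_{\mathrm{alg}}(\widehat{\varphi},\widehat{R})$ with no further work.

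First I would recall the standard facts about $\mathfrak{m}$-adic completion of a Noetherian local ring: $\iota \colon R \to \widehat{R}$ is local and faithfully flat, the maximal ideal of $\widehat{R}$ is $\widehat{\mathfrak{m}}=\mathfrak{m}\widehat{R}$, and there is a canonical isomorphism $\widehat{R}/\mathfrak{m}\widehat{R}\cong R/\mathfrak{m}$. In particular $\iota(\mathfrak{m})\widehat{R}=\widehat{\mathfrak{m}}$ is $\widehat{\mathfrak{m}}$-primary, so $\iota$ is of finite length with $\lambda(\iota)=1$.

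Next, since $\varphi$ is local it maps $\mathfrak{m}^n$ into $\mathfrak{m}^n$ for every $n$, hence is continuous in the $\mathfrak{m}$-adic topology. By the universal property of completion it extends uniquely to a continuous ring homomorphism $\widehat{\varphi}\colon \widehat{R}\to\widehat{R}$ making the square
\[\begin{tikzpicture}
\matrix (m) [matrix of math nodes, row sep=2.7em, column sep=2.9em, text height=2ex, text depth=0.25ex] { R & R \\ \widehat{R} & \widehat{R} \\};
\path[->, font=\scriptsize]
(m-1-1) edge node[auto] {$ \varphi $} (m-1-2)
              edge node [left] {$ \iota $} (m-2-1)
(m-2-1) edge node[auto] {$ \widehat{\varphi} $} (m-2-2)
(m-1-2) edge node [auto] {$ \iota $}  (m-2-2);
\end{tikzpicture}\]
commute. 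That $\widehat{\varphi}$ is of finite length is exactly what is argued in the first paragraph of the proof of Lemma~\ref{essence}: commutativity plus $\lambda(\iota\circ\varphi)<\infty$ forces $\widehat{\varphi}(\widehat{\mathfrak{m}})\widehat{R}$ to be $\widehat{\mathfrak{m}}$-primary. With the three hypotheses verified, Lemma~\ref{essence} applies verbatim and yields the claim.

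I do not expect any real obstacle: the content is just the packaging of standard completion facts into the hypothesis of Lemma~\ref{essence}. The only point one has to be careful about is that the lemma requires the bottom map to be of finite length, and this is built in because $\lambda(\iota)=1$ and $\varphi$ is assumed of finite length, so $\lambda(\widehat{\varphi})$ is forced finite by the commutativity of the square rather than assumed separately.
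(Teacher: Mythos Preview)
Your proposal is correct and is exactly the intended argument: the paper states this corollary immediately after Lemma~\ref{essence} without proof, so the reader is meant to apply that lemma to the flat, finite-length completion map $R\to\widehat{R}$ together with the canonical extension $\widehat{\varphi}$. Your verification of the hypotheses (flatness, $\lambda(\iota)=1$, existence of $\widehat{\varphi}$ via continuity, and the observation that finite length of $\widehat{\varphi}$ is established inside the proof of Lemma~\ref{essence}) is precisely what is being left to the reader.
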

\begin{lemma}\label{subring}
Let \((R,\mathfrak{m})\) be a Noetherian local ring and suppose \(\varphi: R\rightarrow R\) is a local self-map of \(R\). Then \(S:=\bigcap_{n\geq1}\varphi^n(R)\) is a local subring of \(R\) with maximal ideal \(\mathfrak{n}:=\bigcap_{n\geq1}\varphi^n(\mathfrak{m})\).  Furthermore, if \(\mathfrak{a}\) is the ideal generated by \(\mathfrak{n}\) in \(R\), then \(\varphi(\mathfrak{a})R\subseteq\mathfrak{a}\). If \(\varphi\) is additionally injective, then \(\varphi(\mathfrak{a})R=\mathfrak{a}\).
\end{lemma}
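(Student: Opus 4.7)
The plan is to separate the statement into four incremental claims: (i) $S$ is a subring of $R$; (ii) $\mathfrak{n}$ is an ideal of $S$; (iii) every element of $S\setminus\mathfrak{n}$ is a unit of $S$, so $\mathfrak{n}$ is the unique maximal ideal of $S$; and (iv) $\varphi(\mathfrak{a})R\subseteq\mathfrak{a}$, with equality when $\varphi$ is injective. The key structural observation underlying everything is that the two chains in question are \emph{decreasing}: since $\varphi(R)\subseteq R$ one has $\varphi^{n+1}(R)=\varphi^{n}(\varphi(R))\subseteq\varphi^{n}(R)$, and since $\varphi$ is local, $\varphi(\mathfrak{m})\subseteq\mathfrak{m}$ yields the analogous nesting $\varphi^{n+1}(\mathfrak{m})\subseteq\varphi^{n}(\mathfrak{m})$.

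First I would deduce (i) from the fact that each $\varphi^{n}(R)$ is a subring of $R$ containing $1$, so their (nested) intersection $S$ is a subring. For (ii), each $\varphi^{n}(\mathfrak{m})$ is an ideal of $\varphi^{n}(R)$ (as the image of an ideal under the surjection $R\twoheadrightarrow\varphi^{n}(R)$), and intersecting compatible ideals along the nested subrings shows $\mathfrak{n}$ is an ideal of $S$; note in particular that $1\notin\mathfrak{n}$ since $\varphi^{n}(\mathfrak{m})\subseteq\mathfrak{m}$.

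The heart of the argument is (iii). For $s\in S\setminus\mathfrak{n}$ I would pick $n_0$ with $s\notin\varphi^{n_0}(\mathfrak{m})$; by the monotonicity above, $s\notin\varphi^{n}(\mathfrak{m})$ for every $n\geq n_0$. For such $n$, choose any preimage $t_n\in R$ with $\varphi^{n}(t_n)=s$; then $t_n\notin\mathfrak{m}$ (else $s\in\varphi^{n}(\mathfrak{m})$), so $t_n$ is a unit of $R$ and consequently $s$ is a unit in $\varphi^{n}(R)$, with a necessarily unique inverse $u_n$. Uniqueness of inverses in $R$ forces $u_n=u_{n_0}$ for all $n\geq n_0$, so this common element lies in $\bigcap_{n\geq n_0}\varphi^{n}(R)=S$. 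Hence $s$ is a unit in $S$, which identifies $\mathfrak{n}$ as the maximal ideal. I expect this step to be the main obstacle, since it is the only one that is not pure bookkeeping: the subtlety is promoting inverses living in each $\varphi^n(R)$ to a single inverse living in $S$.

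Finally, for (iv), for $x\in\mathfrak{n}$ one has $x\in\varphi^{n}(\mathfrak{m})$ for every $n\geq 1$ and $x\in\mathfrak{m}$, so $\varphi(x)\in\varphi(\mathfrak{m})$ and $\varphi(x)\in\varphi^{n+1}(\mathfrak{m})$ for every $n\geq 1$; thus $\varphi(\mathfrak{n})\subseteq\mathfrak{n}$ and $\varphi(\mathfrak{a})R\subseteq\mathfrak{n}R=\mathfrak{a}$. When $\varphi$ is injective, given $x\in\mathfrak{n}$ write $x=\varphi^{n}(y_n)=\varphi\bigl(\varphi^{n-1}(y_n)\bigr)$ for each $n\geq 1$; injectivity of $\varphi$ forces the elements $z_n:=\varphi^{n-1}(y_n)$ to coincide in a single $z\in R$, and this $z$ belongs to $\varphi^{k}(\mathfrak{m})$ for every $k\geq 0$, hence $z\in\mathfrak{n}$. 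Then $x=\varphi(z)\in\varphi(\mathfrak{n})\subseteq\varphi(\mathfrak{a})R$, giving the reverse inclusion $\mathfrak{a}\subseteq\varphi(\mathfrak{a})R$ and completing the proof.
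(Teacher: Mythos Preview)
Your proof is correct and follows essentially the same route as the paper's: the monotonicity of the chains, the uniqueness-of-inverses trick to land $s^{-1}$ in $S$, and the preimage argument using injectivity all match. The only cosmetic difference is that the paper works with a finite generating set $x_1,\dots,x_g\in\mathfrak{n}$ of $\mathfrak{a}$ (invoking Noetherianity), whereas you argue elementwise on $\mathfrak{n}$; your version is in fact slightly cleaner since the finiteness is not actually needed for this step.
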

\begin{proof}
It is immediately clear that \(S\) is a subring of \(R\) and that \(\mathfrak{n}\) is an ideal of \(S\). To show that \(\mathfrak{n}\) is the (only) maximal ideal of \(S\), consider an element \(s\in S\setminus\mathfrak{n}\). Since \(s\not\in\mathfrak{n}\), there is an \(n_0\) such that \(s\not\in\varphi^{n_0}(\mathfrak{m})\). In fact, since for \(n\geq n_0\), \(\varphi^{n}(\mathfrak{m})\subseteq\varphi^{n_0}(\mathfrak{m})\), we see that \(s\not\in\varphi^n(\mathfrak{m})\) for all \(n\geq n_0\). Hence, there are \emph{units} \(y_n\in R\setminus\mathfrak{m}\) such that \(s=\varphi^n(y_n)\) for all \(n\geq n_0\). Since \(s\) is clearly a unit in \(R\), it has a \emph{unique} multiplicative inverse \(s^{-1}\) in \(R\). From uniqueness of multiplicative inverse it immediately follows that we must have \(s^{-1}=\varphi^n(y_n^{-1})\), for all \(n\geq n_0\). Hence, \(s^{-1}\in S\), that is, \(s\) is also a unit in \(S\).\par
To prove the statements about the ideal \(\mathfrak{a}\), note that by its definition, \(\mathfrak{a}\) has a set of generators \(x_1,\ldots,x_g\in\mathfrak{n}\). So \(\varphi(\mathfrak{a})R\) can be generated by \(\varphi(x_1),\ldots,\varphi(x_g)\) and it suffices to show that each \(\varphi(x_i)\) is in \(\mathfrak{a}\). Since \(x_i\in\mathfrak{n}\), there is a sequence of element \(y_{i,n}\in\mathfrak{m}\) such that \(x_i=\varphi(y_{i,1})=\ldots=\varphi^n(y_{i,n})=\ldots.\) Thus,
\(\varphi(x_i)=\varphi^2(y_{i,1})=\ldots=\varphi^{n+1}(y_{i,n})=\ldots\), showing that \(\varphi(x_i)\in\mathfrak{n}\subset\mathfrak{a}\).
\par Now assume that \(\varphi\) is injective. To show \(\varphi(\mathfrak{a})R=\mathfrak{a}\) it suffices to show that each \(x_i\) is in \(\varphi(\mathfrak{a})\). Since \(x_i\in\mathfrak{n}\), there is a sequence of element \(y_{i,n}\in\mathfrak{m}\) such that
\[x_i=\varphi(y_{i,1})=\ldots=\varphi^n(y_{i,n})=\ldots.\] Since \(x_i=\varphi(y_{i,1})\), we will be done by showing that \(y_{i,1}\in\mathfrak{n}\). By injectivity of \(\varphi\), we have 
\(y_{i,1}=\varphi(y_{i,2})=\ldots=\varphi^{n-1}(y_{i,n})=\ldots,\) which means \(y_{i,1}\in\mathfrak{n}\).
\end{proof}
\begin{remark}\label{surp}
Let \((R,\mathfrak{m})\) be a Noetherian local ring and suppose \(\varphi: R\rightarrow R\) is a local self-map of \(R\). If \(\bigcap_{n\geq1}\varphi^n(\mathfrak{m})=(0)\), then we see from Lemma~\ref{subring} that \(R\) contains a field and is equicharacteristic. As noted in~\cite[Remark~5.9, p.~10]{HA}, this occurs, for example, if \(\varphi\) is a contracting self-map (in the sense of Definition~\ref{contracting}).
\end{remark}
The next lemma shows that  any self-map of a local ring of mixed characteristic naturally induces a self-map of another local ring of equicharacteristic \(p>0\) with the same algebraic entropy.
\begin{lemma}
Let \((R,\mathfrak{m})\) be a Noetherian local ring, and let \(\varphi\) be a self-map of finite length of \(R\). Let \(\mathfrak{a}\) be the ideal of \(R\) defined in Lemma~\ref{subring}, and let \(\overline{\varphi}\) be the local self-map induced by \(\varphi\) on \(R/\mathfrak{a}\) \emph{(}see Lemma~\ref{subring}\:\emph{)}. Then\smallskip

\begin{compactenum}
\item[\emph{\textbf{a})}] \(R/\mathfrak{a}\) is a local ring of equicharacteristic \(p>0\).
\item[\emph{\textbf{b})}] \(h_{\mathrm{alg}}(\overline{\varphi},R/\mathfrak{a})=h_{\mathrm{alg}}(\varphi,R)\). 
\end{compactenum}
\end{lemma}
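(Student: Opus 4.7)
The plan is to argue both parts by unpacking the definition of $\mathfrak{a}$ and then invoking Proposition~\ref{quotients}.

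For part (a), I will use that the residue field $k = R/\mathfrak{m}$ has characteristic $p > 0$, so $p \cdot 1_R \in \mathfrak{m}$. Since $\varphi$ is a ring homomorphism it fixes the prime subring, so $\varphi^n(p \cdot 1_R) = p \cdot 1_R$ for every $n$, which forces $p \cdot 1_R \in \varphi^n(\mathfrak{m})$ for all $n \geq 1$. Therefore $p \cdot 1_R \in \mathfrak{n} \subseteq \mathfrak{a}$. Because $\mathfrak{n} \subseteq \varphi(\mathfrak{m}) \subseteq \mathfrak{m}$ gives $\mathfrak{a} \subseteq \mathfrak{m}$, the quotient $R/\mathfrak{a}$ is nonzero, local, and annihilates $p$, so its characteristic equals $p$; since its residue field is still $k$, it is equicharacteristic $p > 0$.

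For part (b), the key observation is that by Lemma~\ref{subring} the ideal $\mathfrak{a}$ is generated by elements $x_1, \ldots, x_g \in \mathfrak{n} = \bigcap_{n \geq 1} \varphi^n(\mathfrak{m})$. Each generator therefore lies in $\varphi^n(\mathfrak{m})$ for every $n \geq 1$, so $\mathfrak{a} \subseteq \varphi^n(\mathfrak{m})R$ for every $n$. Consequently $\varphi^n(\mathfrak{m})R + \mathfrak{a} = \varphi^n(\mathfrak{m})R$ for every $n$. Applying Proposition~\ref{quotients} then gives $\lambda(\overline{\varphi}^{\,n}) = \ell_R\bigl(R/(\varphi^n(\mathfrak{m})R + \mathfrak{a})\bigr) = \ell_R(R/\varphi^n(\mathfrak{m})R) = \lambda(\varphi^n)$ for every $n$; taking logarithms, dividing by $n$, and letting $n \to \infty$ yields $h_{\mathrm{alg}}(\overline{\varphi}, R/\mathfrak{a}) = h_{\mathrm{alg}}(\varphi, R)$.

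There is no significant obstacle; the only subtle point is noticing that although Lemma~\ref{subring} only guarantees $\varphi(\mathfrak{a})R \subseteq \mathfrak{a}$, the chosen generators of $\mathfrak{a}$ actually lie in \emph{every} iterate $\varphi^n(\mathfrak{m})$, which collapses the a priori larger ideal $\varphi^n(\mathfrak{m})R + \mathfrak{a}$ back to $\varphi^n(\mathfrak{m})R$ and makes the two length sequences literally equal term by term rather than merely asymptotically comparable.
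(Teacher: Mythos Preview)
Your proof is correct. Part (b) is identical to the paper's argument: both observe that the generators of $\mathfrak{a}$ lie in $\mathfrak{n}=\bigcap_n\varphi^n(\mathfrak{m})$, hence $\mathfrak{a}\subseteq\varphi^n(\mathfrak{m})R$ for every $n$, so $\varphi^n(\mathfrak{m})R+\mathfrak{a}=\varphi^n(\mathfrak{m})R$ and Proposition~\ref{quotients} gives $\lambda(\overline{\varphi}^{\,n})=\lambda(\varphi^n)$ term by term.

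For part (a) the paper takes a slightly different tack: rather than showing directly that $p\cdot 1_R\in\mathfrak{n}\subseteq\mathfrak{a}$, it observes that the local subring $S=\bigcap_n\varphi^n(R)$ of Lemma~\ref{subring} has maximal ideal $\mathfrak{n}\subseteq\mathfrak{a}$, so the image of $S$ in $R/\mathfrak{a}$ is a field, and a local ring containing a field is equicharacteristic. Your argument is more elementary (it does not need the full local-subring structure of $S$, only the definition of $\mathfrak{n}$) and pins down the characteristic concretely; the paper's argument is marginally more conceptual in that it exhibits an explicit coefficient field inside $R/\mathfrak{a}$. Both are short and either suffices.
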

\begin{proof}
\textbf{a}) With reference to Lemma~\ref{subring}, the image of the local subring \(S\) of \(R\) in \(R/\mathfrak{a}\) is a field, because it's maximal ideal \(\mathfrak{n}\) is contained in \(\mathfrak{a}\) and is mapped to \(0\). Hence \(R/\mathfrak{a}\) contains a field and must be a local ring of equicharacteristic \(p>0\), as its residue field is of characteristic \(p>0\).  \par \textbf{b}) Note that \(\varphi^n(\mathfrak{m})R\supset\mathfrak{a}\) for all \(n\geq1\). Hence, \(\varphi^n(\mathfrak{m})R+\mathfrak{a}=\varphi^n(\mathfrak{m})R\). Thus by Proposition~\ref{quotients}, \(\lambda(\overline{\varphi}^{\:n})=\lambda(\varphi^n)\). Our claim quickly follows from this equality.
\end{proof}
\begin{proposition}\label{genconv}
Let \(R\) be a Noetherian ring, and let \(\varphi\) be a \emph{finite} local self-map of \(R\). If we denote the minimum number of generators of the \(R\)-module \(\Fin R\) by \(\nu(\Fin R)\), then the sequence \(\{(\log \nu(\Fin R))/n\}\) converges to its infimum. We will denote this limit by by \(\nu_{\infty}\).
\end{proposition}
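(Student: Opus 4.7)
The plan is to reduce the claim directly to Theorem~\ref{main existence} via the identity from Corollary~\ref{genandleng}. Since $\varphi$ is a finite local self-map, $\Fi k$ is a finite field extension of $k$, and Corollary~\ref{genandleng} gives the key formula
\[
\nu(\Fin R) = [\Fi k : k]^n \cdot \lambda(\varphi^n).
\]
Taking logarithms and dividing by $n$, one gets
\[
\frac{\log \nu(\Fin R)}{n} = \log\bigl[\Fi k : k\bigr] + \frac{\log \lambda(\varphi^n)}{n}.
\]
The first summand is a fixed nonnegative constant (independent of $n$), so the convergence behavior of the left-hand side is determined entirely by that of $\{(\log \lambda(\varphi^n))/n\}$.

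By Theorem~\ref{main existence}, the sequence $\{(\log \lambda(\varphi^n))/n\}$ converges to $\inf_n \{(\log \lambda(\varphi^n))/n\}$. Adding the constant $\log [\Fi k : k]$ to every term shifts both the sequence and its infimum by the same amount, so $\{(\log \nu(\Fin R))/n\}$ converges to its own infimum as claimed.

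As a sanity check, one can verify directly that Lemma~\ref{subadditively convergent} applies to $b_n := \nu(\Fin R)$: clearly $b_n \geq 1$ since $\Fin R \neq 0$, and the submultiplicativity $b_{n+m} \leq b_n \cdot b_m$ follows from the displayed identity together with $\lambda(\varphi^{n+m}) \leq \lambda(\varphi^n)\cdot\lambda(\varphi^m)$ from Corollary~\ref{proplambda}. There is really no main obstacle here; the work has been done in Corollary~\ref{genandleng} and Theorem~\ref{main existence}, and the proposition is simply the translation of the earlier convergence statement for $\lambda(\varphi^n)$ into one for $\nu(\Fin R)$ using the residue-degree factor.
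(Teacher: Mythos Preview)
Your proof is correct, but it takes a different route from the paper's. The paper applies Lemma~\ref{subadditively convergent} directly to $b_n=\nu(\Fin R)$ and verifies the submultiplicativity $b_{n+m}\leq b_n b_m$ by an elementary generator argument: if $\{x_i\}$ generates $\Fim R$ and $\{y_j\}$ generates $\Fin R$ over $R$, then $\{\varphi^m(y_j)x_i\}$ generates $\Finm R$. Your argument instead invokes Corollary~\ref{genandleng} to rewrite $\nu(\Fin R)$ as $[\Fi k:k]^n\lambda(\varphi^n)$ and then piggybacks on Theorem~\ref{main existence}, observing that a constant shift preserves convergence to the infimum. Your approach is more economical given what has already been established, and it makes transparent that $\nu_\infty=\log[\Fi k:k]+h_{\mathrm{alg}}(\varphi,R)$ (which the paper records separately as the next corollary). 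The paper's approach, on the other hand, establishes the submultiplicativity of $\nu(\Fin R)$ intrinsically, without passing through the length formula or the residue-field degree; this is a slightly more robust fact in its own right. Your ``sanity check'' paragraph is close to the paper's actual proof, except that the paper gets submultiplicativity from the generator argument rather than from Corollary~\ref{proplambda}.
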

\begin{proof}
We will apply Lemma~\ref{subadditively convergent} to prove this proposition, taking \[b_n=\nu(\Fin R).\]
To verify conditions of Lemma~\ref{subadditively convergent}, first note that the inequality
\(b_{n+m}\leq b_n\cdot b_m\) holds because if \(\{x_1,\ldots,x_t\}\) and \(\{y_1,\ldots,y_s\}\) are sets of generators of \(\Fim R\) and \(\Fin R\) over \(R\), respectively, then \(\{\varphi^m(y_j)x_i\mid 1\leq i\leq t, 1\leq j\leq s\}\) is a set of generators of \(\Finm R\) over \(R\). Therefore
\[\nu(\Finm R)\leq\nu(\Fin R)\cdot\nu(\Fim R).\] On the other hand, it is clear that \(b_n=\nu(\Fin R)\geq1\). Hence, by Lemma~\ref{subadditively convergent} the sequence  
\(\{(\log \nu(\Fin R))/n\}\) converges to its infimum.
\end{proof}
\begin{corollary}
Let \((R,\mathfrak{m},k)\) be a Noetherian local ring, and let \(\varphi\) be a \emph{finite} local self-map of \(R\). Then \(\nu_{\infty}=\log[\Fi k:k]+h_{\mathrm{alg}}(\varphi,R)\), where \(\nu_{\infty}\) is as defined in Proposition~\ref{genconv}.
\end{corollary}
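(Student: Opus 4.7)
The plan is to reduce the claim directly to Corollary~\ref{genandleng}, which already gives the key identity $\nu(\Fin R)=[\Fi k:k]^n\cdot\lambda(\varphi^n)$. Since $\varphi$ is finite, the residue-field extension induced by $\varphi$ has finite degree $[\Fi k:k]<\infty$, so taking logarithms of this identity is legitimate and yields
\[
\frac{\log\nu(\Fin R)}{n}=\log[\Fi k:k]+\frac{\log\lambda(\varphi^n)}{n}.
\]
Both sequences on each side are known to converge: the left-hand side to $\nu_{\infty}$ by Proposition~\ref{genconv}, and the sequence $\{(\log\lambda(\varphi^n))/n\}$ on the right to $h_{\mathrm{alg}}(\varphi,R)$ by Theorem~\ref{main existence} and the definition of algebraic entropy. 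Passing to the limit as $n\to\infty$ gives the desired equality.

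The only subtle point worth a brief comment in the write-up is the factor $[\Fi k:k]^n$ rather than $[\Fin k:k]$ appearing in Corollary~\ref{genandleng}; this uses the multiplicativity of degrees in a tower, since the residue-field map induced by $\varphi^n$ is the $n$-fold composition of the residue-field map induced by $\varphi$, so $[\Fin k:k]=[\Fi k:k]^n$. Once that is noted, the computation above is essentially one line, and no further estimates or auxiliary constructions are required.

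There is no real obstacle to the proof: all the analytic content (existence of both limits and their identification with suprema/infima) has already been absorbed into Proposition~\ref{genconv} and Theorem~\ref{main existence}, and the algebraic content (the exact formula relating $\nu(\Fin R)$ to $\lambda(\varphi^n)$) is Corollary~\ref{genandleng}. The statement is therefore essentially a formal corollary, obtained by taking $\log$, dividing by $n$, and letting $n\to\infty$.
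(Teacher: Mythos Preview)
Your proposal is correct and follows essentially the same approach as the paper: invoke Corollary~\ref{genandleng} to obtain $\nu(\Fin R)=[\Fi k:k]^n\cdot\lambda(\varphi^n)$, take logarithms, divide by $n$, and let $n\to\infty$. Your additional remarks about finiteness of $[\Fi k:k]$ and the multiplicativity $[\Fin k:k]=[\Fi k:k]^n$ are helpful clarifications but are already implicit in (or part of the proof of) Corollary~\ref{genandleng}.
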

\begin{proof}
By Corollary~\ref{genandleng} \(\nu(\Fin R)=[\Fi k:k]^n\cdot\lambda(\varphi^n)\). The result follows by applying logarithm to both sides of this equation, then dividing by \(n\) and letting \(n\) approach infinity.
\end{proof}

\section{Entropy-type properties of algebraic entropy}\label{Fpro}
\begin{proposition}\label{exponent}
Let \((R,\mathfrak{m})\) be a Noetherian local ring, let \(\varphi\) be a self-map of finite length of \(R\). Then for any \(k\in\mathbb{N}\) \[h_{\mathrm{alg}}(\varphi^k,R)=k\cdot h_{\mathrm{alg}}(\varphi,R).\]
\end{proposition}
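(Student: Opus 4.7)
The plan is to unfold the definition of algebraic entropy for both sides and recognize the sequence defining $h_{\mathrm{alg}}(\varphi^k,R)$ as a rescaled subsequence of the sequence defining $h_{\mathrm{alg}}(\varphi,R)$. First I would verify that $\varphi^k$ is itself a self-map of finite length, so that the statement is well-posed: this is immediate from Corollary~\ref{crucial4self}. Therefore both quantities exist as genuine limits by Theorem~\ref{main existence}.

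Next I would write
\[
h_{\mathrm{alg}}(\varphi^k,R)=\lim_{n\to\infty}\frac{\log\lambda((\varphi^k)^n)}{n}=\lim_{n\to\infty}\frac{\log\lambda(\varphi^{kn})}{n}=k\cdot\lim_{n\to\infty}\frac{\log\lambda(\varphi^{kn})}{kn}.
\]
The sequence $\{(\log\lambda(\varphi^{kn}))/(kn)\}_{n\geq1}$ is a subsequence of the convergent sequence $\{(\log\lambda(\varphi^m))/m\}_{m\geq1}$, and by Theorem~\ref{main existence} the latter converges to $h_{\mathrm{alg}}(\varphi,R)$. Hence the subsequence has the same limit, and the right-hand side equals $k\cdot h_{\mathrm{alg}}(\varphi,R)$.

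There is essentially no obstacle here; the result is a formal consequence of the existence of the limit. If one wanted to avoid appealing to the existence of the limit and argue directly from the subadditive inequality $\lambda(\varphi^{n+m})\leq\lambda(\varphi^n)\lambda(\varphi^m)$, one could instead note that both sequences have the same infimum scaled by $k$ (using Lemma~\ref{subadditively convergent}), but invoking Theorem~\ref{main existence} as above is the cleanest route.
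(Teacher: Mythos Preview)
Your proof is correct and follows essentially the same approach as the paper: both unfold the definition and recognize the sequence for $h_{\mathrm{alg}}(\varphi^k,R)$ as a rescaling of a subsequence of the sequence for $h_{\mathrm{alg}}(\varphi,R)$. Your version is slightly more explicit in justifying that $\varphi^k$ is of finite length and that the subsequence shares the limit, but the argument is the same.
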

\begin{proof}
By definition of algebraic entropy 
\begin{eqnarray}
h_{\mathrm{alg}}(\varphi^k,R)&=&\lim_{n\rightarrow\infty}(1/n)\cdot\log\lambda(\varphi^{kn})\nonumber\\
&=&k\cdot\lim_{n\rightarrow\infty}(1/(kn))\cdot\log\lambda(\varphi^{kn})\nonumber\\
&=&k\cdot h_{\mathrm{alg}}(\varphi,R).\nonumber
\end{eqnarray}
\end{proof}
We need the following two lemmas for the proof of Theorem~\ref{irreducible components}.
\begin{lemma}[see~\cite{AdKoMc}, p.~312]\label{maximum limit}
Let \(\{a_n\}\) and \(\{b_n\}\) be two sequences of real numbers not less than \(1\) such that \(\lim_{n\rightarrow\infty}(\log a_n)/n=\alpha\) and \(\lim_{n\rightarrow\infty}(\log b_n)/n=\beta\) exist. Then \[\lim_{n\rightarrow\infty}\log(a_n+b_n)/n=\max\{\alpha,\beta\}.\]
\end{lemma}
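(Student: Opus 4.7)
The plan is to prove this by a standard squeeze argument, bounding $\log(a_n+b_n)/n$ above and below by quantities that both converge to $\max\{\alpha,\beta\}$.

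Without loss of generality I would assume $\alpha\geq\beta$, so $\max\{\alpha,\beta\}=\alpha$. For the lower bound, since $a_n,b_n\geq 1$, we have $a_n+b_n\geq a_n$, so taking logarithms and dividing by $n$ gives
\[
\frac{\log(a_n+b_n)}{n}\geq\frac{\log a_n}{n},
\]
and passing to the $\liminf$ yields $\liminf_{n\to\infty}\log(a_n+b_n)/n\geq\alpha$. The same inequality with $b_n$ in place of $a_n$ shows the $\liminf$ is at least $\beta$ as well, so in fact it is at least $\max\{\alpha,\beta\}=\alpha$.

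For the upper bound, I would use the elementary inequality $a_n+b_n\leq 2\max\{a_n,b_n\}$. Taking logarithms,
\[
\log(a_n+b_n)\leq\log 2+\max\{\log a_n,\log b_n\},
\]
so that
\[
\frac{\log(a_n+b_n)}{n}\leq\frac{\log 2}{n}+\max\left\{\frac{\log a_n}{n},\frac{\log b_n}{n}\right\}.
\]
Since the maximum of two convergent sequences converges to the maximum of their limits, and $(\log 2)/n\to 0$, passing to the $\limsup$ gives $\limsup_{n\to\infty}\log(a_n+b_n)/n\leq\max\{\alpha,\beta\}$.

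Combining the two inequalities shows that the limit exists and equals $\max\{\alpha,\beta\}$. There is no real obstacle here; the only subtlety is ensuring the logarithms are nonnegative so that the bounding manipulations are harmless, which is guaranteed by the hypothesis $a_n,b_n\geq 1$.
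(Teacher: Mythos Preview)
Your proof is correct and is the standard squeeze argument for this fact. The paper does not supply its own proof of this lemma---it simply cites \cite[p.~312]{AdKoMc}---so there is nothing further to compare; your argument is exactly the expected one.
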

\begin{lemma}\label{combine}
Let \((R,\mathfrak{m})\) be a Noetherian local ring, let \(\varphi\) be a self-map of finite length of \(R\). Let \(\mathfrak{a}_1,\ldots,\mathfrak{a}_s\) be a collection of not necessarily distinct ideals of \(R\), such that for each \(\mathfrak{a}_i\), \(\varphi(\mathfrak{a}_i)R\subset\mathfrak{a}_i\). Let \(\overline{\varphi}\) and \(\overline{\varphi}_i\) be the self-maps induced by \(\varphi\) on \(R/{\scriptstyle\prod}_i\mathfrak{a}_i\) and \(A/\mathfrak{a}_i\), respectively. Then
\[h_{\mathrm{alg}}(\overline{\varphi},R/{\scriptstyle\prod}_i\mathfrak{a}_i)=\max\{h_{\mathrm{alg}}(\overline{\varphi}_i,R/\mathfrak{a}_i)\mid 1\leq i\leq s\}.\] 
\end{lemma}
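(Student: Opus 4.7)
The plan is to induct on $s$, with all the substance in the case $s=2$; the inductive step is a simple repackaging via $\prod_{i=1}^{s}\mathfrak{a}_i = \mathfrak{a}_1\cdot\prod_{i=2}^{s}\mathfrak{a}_i$, noting that a product of $\varphi$-invariant ideals is $\varphi$-invariant. So suppose $s=2$, and write $\mathfrak{q}_n := \varphi^n(\mathfrak{m})R$. By Proposition~\ref{quotients}, the claim reduces to comparing the sequences $\lambda(\overline{\varphi}^{\,n})=\ell_R(R/(\mathfrak{q}_n+\mathfrak{a}_1\mathfrak{a}_2))$ and $\lambda(\overline{\varphi}_i^{\,n})=\ell_R(R/(\mathfrak{q}_n+\mathfrak{a}_i))$ on the $\tfrac{1}{n}\log$-scale. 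The inequality $\max_i h_{\mathrm{alg}}(\overline{\varphi}_i,R/\mathfrak{a}_i)\leq h_{\mathrm{alg}}(\overline{\varphi},R/\mathfrak{a}_1\mathfrak{a}_2)$ is essentially free: the containment $\mathfrak{a}_1\mathfrak{a}_2\subseteq\mathfrak{a}_i$ gives a surjection $R/(\mathfrak{q}_n+\mathfrak{a}_1\mathfrak{a}_2)\twoheadrightarrow R/(\mathfrak{q}_n+\mathfrak{a}_i)$, so $\lambda(\overline{\varphi}_i^{\,n})\leq\lambda(\overline{\varphi}^{\,n})$ for each $i$ and each $n$.

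For the reverse inequality, I would feed the short exact sequence
\[0\longrightarrow (\mathfrak{q}_n+\mathfrak{a}_1)/(\mathfrak{q}_n+\mathfrak{a}_1\mathfrak{a}_2)\longrightarrow R/(\mathfrak{q}_n+\mathfrak{a}_1\mathfrak{a}_2)\longrightarrow R/(\mathfrak{q}_n+\mathfrak{a}_1)\longrightarrow 0\]
into the additivity of length and estimate the kernel $K_n$ on the left in terms of $R/(\mathfrak{q}_n+\mathfrak{a}_2)$. The key observation: fix generators $x_1,\ldots,x_g$ of $\mathfrak{a}_1$ with $g=\nu(\mathfrak{a}_1)$; their images generate $K_n$, and each is annihilated by $\mathfrak{a}_2+\mathfrak{q}_n$, since $\mathfrak{a}_2 x_i\subseteq\mathfrak{a}_1\mathfrak{a}_2\subseteq\mathfrak{q}_n+\mathfrak{a}_1\mathfrak{a}_2$ and $\mathfrak{q}_n x_i\subseteq\mathfrak{q}_n$. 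Hence $K_n$ is a quotient of $\bigl(R/(\mathfrak{q}_n+\mathfrak{a}_2)\bigr)^{g}$, so
\[\lambda(\overline{\varphi}^{\,n})=\ell_R(K_n)+\lambda(\overline{\varphi}_1^{\,n})\leq\nu(\mathfrak{a}_1)\cdot\lambda(\overline{\varphi}_2^{\,n})+\lambda(\overline{\varphi}_1^{\,n}).\]
Applying $\tfrac{1}{n}\log(-)$, noting by Theorem~\ref{main existence} that the individual limits $h_{\mathrm{alg}}(\overline{\varphi}_i)$ exist, and invoking Lemma~\ref{maximum limit} (the multiplicative constant $\nu(\mathfrak{a}_1)$ is absorbed in the limit), yields $h_{\mathrm{alg}}(\overline{\varphi},R/\mathfrak{a}_1\mathfrak{a}_2)\leq\max\{h_{\mathrm{alg}}(\overline{\varphi}_1,R/\mathfrak{a}_1),h_{\mathrm{alg}}(\overline{\varphi}_2,R/\mathfrak{a}_2)\}$, completing the base case.

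The main obstacle is pinning down the right target for $K_n$: one needs an upper bound whose only $n$-dependence is through a single $\lambda(\overline{\varphi}_i^{\,n})$, with the remaining factor a constant independent of $n$. The annihilation-by-$\mathfrak{a}_2+\mathfrak{q}_n$ identifies exactly the right target $R/(\mathfrak{q}_n+\mathfrak{a}_2)$, and $\nu(\mathfrak{a}_1)$ supplies such a constant; no finer analysis of $K_n$ is required, because any fixed constant is erased by $\tfrac{1}{n}\log$. With the $s=2$ case in hand, the induction on $s$ runs by applying it to the pair $(\mathfrak{a}_1,\mathfrak{a}_2\cdots\mathfrak{a}_s)$ and invoking the inductive hypothesis for $\mathfrak{a}_2\cdots\mathfrak{a}_s$.
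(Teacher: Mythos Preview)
Your proof is correct and follows essentially the same strategy as the paper: induction on $s$, with the $s=2$ case handled via the short exact sequence with kernel $(\mathfrak{q}_n+\mathfrak{a}_1)/(\mathfrak{q}_n+\mathfrak{a}_1\mathfrak{a}_2)$, bounded by a fixed number of copies of $R/(\mathfrak{q}_n+\mathfrak{a}_2)$ because $\mathfrak{a}_2$ annihilates $\mathfrak{a}_1/\mathfrak{a}_1\mathfrak{a}_2$, then Lemma~\ref{maximum limit}. Your annihilator argument is a slightly more direct packaging of what the paper does via the Second Isomorphism Theorem and a tensor-product surjection, but the substance is the same.
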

\begin{proof}
We proceed by induction on \(s\), the number of ideals, counting possible repetitions. The statement is trivially true if \(s=1\), so suppose \(s=2\). Without loss of generality we may assume
\[h_{\mathrm{alg}}(\overline{\varphi}_1,R/\mathfrak{a}_1)=\max\{h_{\mathrm{alg}}(\overline{\varphi}_1,R/\mathfrak{a}_1),h_{\mathrm{alg}}(\overline{\varphi}_2,R/\mathfrak{a}_2)\}.\]
Since \(\mathfrak{a}_1\mathfrak{a}_2\subset\mathfrak{a}_1\), we have \(\mathfrak{a}_1\cap\left(\mathfrak{a}_1\mathfrak{a}_2+\varphi^n(\mathfrak{m})R\right)=\mathfrak{a}_1\mathfrak{a}_2+(\mathfrak{a}_1\cap\varphi^n(\mathfrak{m})R)\). Thus, if we apply the Second Isomorphism Theorem to make the identification
\[\frac{\mathfrak{a}_1+\varphi^n(\mathfrak{m})R}{\mathfrak{a}_1\mathfrak{a}_2+\varphi^n(\mathfrak{m})R}\cong\frac{\mathfrak{a}_1}{\mathfrak{a}_1\mathfrak{a}_2+(\mathfrak{a}_1\cap\varphi^n(\mathfrak{m})R)},\] then we can write an exact sequence
\[0\rightarrow\frac{\mathfrak{a}_1}{\mathfrak{a}_1\mathfrak{a}_2+(\mathfrak{a}_1\cap\varphi^n(\mathfrak{m})R)}\rightarrow\frac{R}{\mathfrak{a}_1\mathfrak{a}_2+\varphi^n(\mathfrak{m})R}\rightarrow\frac{R}{\mathfrak{a}_1+\varphi^n(\mathfrak{m})R}\rightarrow0.\]
From this exact sequence we obtain
\begin{eqnarray}\label{lengthineq2}
\ell_R(R/[\mathfrak{a}_1+\varphi^n(\mathfrak{m})R])&\leq&\ell_R(R/[\mathfrak{a}_1\mathfrak{a}_2+\varphi^n(\mathfrak{m})R])\nonumber\\ 
& =&\ell_R(\mathfrak{a}_1/[\mathfrak{a}_1\mathfrak{a}_2+\left(\mathfrak{a}_1\cap\varphi^n(\mathfrak{m}\right)R)])\\
& + & \ell_R(R/[\mathfrak{a}_1+\varphi^n(\mathfrak{m})R]).\nonumber
\end{eqnarray}
Since in the quotient ring \(R/(\mathfrak{a}_1\mathfrak{a}_2)\) the ideal \(\mathfrak{a}_2/(\mathfrak{a}_1\mathfrak{a}_2)\) annihilates \(\mathfrak{a}_1/(\mathfrak{a}_1\mathfrak{a}_2)\), we can consider \(\mathfrak{a}_1/(\mathfrak{a}_1\mathfrak{a}_2)\) as a finite \(\left[\left(R/(\mathfrak{a}_1\mathfrak{a}_2)\right)/\left(\mathfrak{a}_2/(\mathfrak{a}_1\mathfrak{a}_2)\right)\right]\)-module and as such, there is a surjection
\[\left(\frac{R/(\mathfrak{a}_1\mathfrak{a}_2)}{\mathfrak{a}_2/(\mathfrak{a}_1\mathfrak{a}_2)}\right)^t\rightarrow\frac{\mathfrak{a}_1}{(\mathfrak{a}_1\mathfrak{a}_2)}\rightarrow0.\]
If we tensor this surjection, over the quotient ring \(R/(\mathfrak{a}_1\mathfrak{a}_2)\) with \[\frac{R/(\mathfrak{a}_1\mathfrak{a}_2)}{[\mathfrak{a}_1\mathfrak{a}_2+\varphi^n(\mathfrak{m})R]/(\mathfrak{a}_1\mathfrak{a}_2)}\] and then compare the lengths in the resulting surjection, by using Proposition~\ref{quotients}, Proposition~\ref{product formula} and the Third Isomorphism Theorem, we can quickly see
\begin{eqnarray}
\ell_R(\mathfrak{a}_1/[\mathfrak{a}_1\mathfrak{a}_2+\mathfrak{a}_1\cdot\varphi^n(\mathfrak{m})R])&\leq&\ell_R(\mathfrak{a}_1/[\mathfrak{a}_1^2\mathfrak{a}_2+\mathfrak{a}_1\cdot\varphi^n(\mathfrak{m})R])\nonumber\\
&\leq&t\cdot\ell_R(R/[\mathfrak{a}_2+\varphi^n(\mathfrak{m})R]).\nonumber
\end{eqnarray}
Since \(\ell_R(\mathfrak{a}_1/[\mathfrak{a}_1\mathfrak{a}_2+(\mathfrak{a}_1\cap\varphi^n(\mathfrak{m})R)])\leq\ell_R(\mathfrak{a}_1/[\mathfrak{a}_1\mathfrak{a}_2+\mathfrak{a}_1\cdot\varphi^n(\mathfrak{m})R])\), the previous inequality together with Inequality~\ref{lengthineq2} yield
\begin{eqnarray}
\ell_R(R/[\mathfrak{a}_1+\varphi^n(\mathfrak{m})R])&\leq&\ell_R(R/[\mathfrak{a}_1\mathfrak{a}_2+\varphi^n(\mathfrak{m})R])\nonumber \\
 &\leq& \ell_R(R/[\mathfrak{a}_1+\varphi^n(\mathfrak{m})R])+t\cdot\ell_R(R/[\mathfrak{a}_2+\varphi^n(\mathfrak{m})R]).\nonumber  
\end{eqnarray}
If we apply logarithm, divide by \(n\), and let \(n\) approach infinity, by using Lemma~\ref{maximum limit} and Proposition~\ref{quotients} we obtain
\[h_{\mathrm{alg}}(\overline{\varphi}_1,R/\mathfrak{a}_1)\leq h_{\mathrm{alg}}(\overline{\varphi},R/\mathfrak{a}_1\mathfrak{a}_2)\leq\max\{h_{\mathrm{alg}}(\overline{\varphi}_1,R/\mathfrak{a}_1),h_{\mathrm{alg}}(\overline{\varphi}_2,R/\mathfrak{a}_2)\}.\]  This establishes the result for \(s=2\). Now we assume the statement holds for all \(s\) with \(2\leq s\leq n_0\), and we show it also holds for \(s=n_0+1\). To this end, we can write the product \({\scriptstyle\prod}_{i=1}^{n_0+1}\:\mathfrak{a}_i\) of our ideals in the form \(({\scriptstyle\prod}_{i=1}^{n_0}\:\mathfrak{a}_i)(\mathfrak{a}_{n_0+1})\) and then apply the case \(s=2\) followed by the case \(s=n_0\) to establish the result for \(s=n_0+1\), using the induction hypothesis.
\end{proof}
The next result shows that if a self-map \(\varphi\) of a Noetherian local ring \(R\) `fixes' minimal prime ideals of \(R\), then its algebraic entropy is equal to the maximum algebraic entropy of the self-maps that it induces on each irreducible component of \(\Spec(R)\). 
\begin{theorem}\label{irreducible components}
Let \((R,\mathfrak{m})\) be a Noetherian local ring, let \(\varphi\) be a self-map of finite length of \(R\). Assume further that for every minimal prime ideal \(\mathfrak{p}_i\) of \(R\) we have \(\varphi(\mathfrak{p}_i)R\subset\mathfrak{p}_i\) and let \(\overline{\varphi}_i\) be the self-map induced by \(\varphi\) on \(R/\mathfrak{p}_i\). Then
\begin{equation}\label{irrcomps}
h_{\mathrm{alg}}(\varphi,R)=\max\{h_{\mathrm{alg}}(\overline{\varphi}_i,R/\mathfrak{p}_i)\mid\mathfrak{p}_i\in\Min(R)\}.
\end{equation}
\end{theorem}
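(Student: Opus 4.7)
The plan is to reduce Theorem~\ref{irreducible components} directly to Lemma~\ref{combine} by exploiting the fact that in a Noetherian ring the nilradical is nilpotent.

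First I would recall that $R$ has only finitely many minimal primes $\mathfrak{p}_1,\ldots,\mathfrak{p}_m$, that their intersection is the nilradical $\mathcal{N}$, and that because $R$ is Noetherian there is an integer $N\geq 1$ with $\mathcal{N}^N=(0)$. Since ideal multiplication is commutative, $(\mathfrak{p}_1\mathfrak{p}_2\cdots\mathfrak{p}_m)^N=\mathfrak{p}_1^N\mathfrak{p}_2^N\cdots\mathfrak{p}_m^N$, and this is contained in $\mathcal{N}^N=(0)$. Thus the product of the $mN$ ideals in the list consisting of $N$ copies of each $\mathfrak{p}_i$ equals the zero ideal.

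Next I would set up exactly this collection for Lemma~\ref{combine}: let $\mathfrak{a}_1,\ldots,\mathfrak{a}_{mN}$ be the list that repeats each $\mathfrak{p}_i$ exactly $N$ times. Each $\mathfrak{a}_j$ is one of the $\mathfrak{p}_i$, so the hypothesis $\varphi(\mathfrak{a}_j)R\subseteq\mathfrak{a}_j$ is immediate from the assumption $\varphi(\mathfrak{p}_i)R\subseteq\mathfrak{p}_i$. Moreover $\prod_j\mathfrak{a}_j=(0)$ by the previous paragraph, so $R/\prod_j\mathfrak{a}_j=R$ and the induced self-map $\overline{\varphi}$ on this quotient is $\varphi$ itself. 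Applying Lemma~\ref{combine} gives
\[
h_{\mathrm{alg}}(\varphi,R)=\max\bigl\{h_{\mathrm{alg}}(\overline{\varphi}_j,R/\mathfrak{a}_j)\mid 1\leq j\leq mN\bigr\},
\]
but since each $\mathfrak{a}_j$ is some $\mathfrak{p}_i$, the right-hand side is exactly $\max\{h_{\mathrm{alg}}(\overline{\varphi}_i,R/\mathfrak{p}_i)\mid 1\leq i\leq m\}$, which is the desired equality~\eqref{irrcomps}.

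I do not anticipate a real obstacle here: the content of the theorem was already packaged into Lemma~\ref{combine}, and the only additional ingredient needed is the nilpotence of $\mathcal{N}$ together with the trick of repeating each minimal prime enough times so that the product of the list lands inside $\mathcal{N}^N=(0)$. One could alternatively phrase this by passing to $R/\mathcal{N}$ via the finite-length quotient formula in Proposition~\ref{quotients}, but the repetition trick is the cleanest route and uses Lemma~\ref{combine} as a black box.
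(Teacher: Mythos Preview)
Your proposal is correct and is essentially the same argument as the paper's: the paper sets $\mathfrak{a}=\prod_i\mathfrak{p}_i$, notes that $\mathfrak{a}^N=(0)$ since $\mathfrak{a}$ lies in the nilradical, and then applies Lemma~\ref{combine} to the repeated list of minimal primes to conclude. Your write-up is slightly more explicit about forming the list of $mN$ ideals, but the idea and the key lemma used are identical.
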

\begin{proof}
Let \(\Min(R)=\{\mathfrak{p}_1,\ldots,\mathfrak{p}_s\}\) and let \(\mathfrak{a}=\prod_{i}\mathfrak{p}_i\). Then \(\mathfrak{a}\) is contained in the nilradical of \(R\), hence \(\mathfrak{a}^N=(0)\) for some \(N\). Therefore it is clear that \(h_{\mathrm{alg}}(\varphi,R)=h_{\mathrm{alg}}(\overline{\varphi},R/\mathfrak{a}^N)\). But by Lemma~\ref{combine}
\[h_{\mathrm{alg}}(\overline{\varphi},R/\mathfrak{a}^N)=\max\{h_{\mathrm{alg}}(\overline{\varphi}_i,R/\mathfrak{p}_i)\mid \mathfrak{p}_i\in\Min(R)\}.\] 
\end{proof}
\begin{remark}
As we shall see shortly in Proposition~\ref{Minimals}, under certain conditions, when a self-map is \emph{integral}, a power of the self-map `fixes' minimal prime ideals. As a result, we can apply Theorem~\ref{irreducible components} to a power of our self-map in this case. We will obtain formulas similar to Formula~\ref{irrcomps} in Corollary~\ref{fistintegral1} and Theorem~\ref{irreducible components2}, below.
\end{remark}
\begin{lemma}\label{factored}
Let \(f:(R,\mathfrak{m})\rightarrow(S,\mathfrak{n})\) and \(g:(S,\mathfrak{n})\rightarrow(R,\mathfrak{m})\) be homomorphisms of finite length of Noetherian local rings. Then \(h_{\mathrm{alg}}(g\circ f,R)=h_{\mathrm{alg}}(f\circ g,S)\).
\end{lemma}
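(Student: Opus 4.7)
The plan is to reduce the claim to the elementary algebraic identity $(g\circ f)^n = g\circ (f\circ g)^{n-1}\circ f$ and its mirror image, which lets us sandwich $\lambda((g\circ f)^n)$ between constant multiples of $\lambda((f\circ g)^{n-1})$, so that after dividing by $n$ and passing to the limit the two entropies coincide.

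First, writing $\varphi := g\circ f$ and $\psi := f\circ g$, I would note that Corollary~\ref{flc} guarantees both are local self-maps of finite length (of $R$ and $S$, respectively), so $h_{\mathrm{alg}}(\varphi,R)$ and $h_{\mathrm{alg}}(\psi,S)$ are defined, and Corollary~\ref{crucial4self} guarantees that each iterate $\varphi^n$ and $\psi^n$ is again of finite length. A one-line induction on $n$ then yields the identities
\[
\varphi^n \;=\; g\circ \psi^{n-1}\circ f, \qquad \psi^n \;=\; f\circ \varphi^{n-1}\circ g,
\]
valid for every $n\geq 1$ (with $\varphi^0,\psi^0$ the identities).

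Next, I would apply the upper-bound half of Corollary~\ref{proplambda}(a) twice to each composite. Viewing $g\circ \psi^{n-1}\circ f$ as $(g\circ \psi^{n-1})\circ f$ and then splitting $g\circ \psi^{n-1}$ once more, I obtain
\[
\lambda(\varphi^n)\;\leq\;\lambda(g)\cdot\lambda(\psi^{n-1})\cdot\lambda(f),
\]
and swapping roles,
\[
\lambda(\psi^n)\;\leq\;\lambda(f)\cdot\lambda(\varphi^{n-1})\cdot\lambda(g).
\]

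Finally, I would take logarithms, divide by $n$, and let $n\to\infty$. Since $(\log\lambda(f)+\log\lambda(g))/n\to 0$ and $(n-1)/n\to 1$, the first inequality gives $h_{\mathrm{alg}}(\varphi,R)\leq h_{\mathrm{alg}}(\psi,S)$ and the second gives the reverse, establishing equality. There is no genuine obstacle here: the only thing one has to be careful about is that all intermediate maps really are of finite length so that Corollary~\ref{proplambda} is applicable, and this is already covered by Corollaries~\ref{flc} and~\ref{crucial4self}. Flatness plays no role, since only the upper-bound half of Corollary~\ref{proplambda} is needed.
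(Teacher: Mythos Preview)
Your proof is correct and is essentially identical to the paper's: the paper also sets $\varphi=g\circ f$, $\psi=f\circ g$, uses the identity $\varphi^{n+1}=g\circ\psi^{n}\circ f$ (your $\varphi^{n}=g\circ\psi^{n-1}\circ f$ with a harmless index shift) together with Corollary~\ref{proplambda}(a) to get $\lambda(\varphi^{n+1})\leq\lambda(g)\lambda(\psi^{n})\lambda(f)$ and its symmetric counterpart, and then passes to the limit. You are in fact slightly more explicit than the paper in noting that the intermediate composites are of finite length so that Corollary~\ref{proplambda} applies.
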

\begin{proof}
Before we begin the proof, it is noteworthy that by~\cite[Theorem~15.1(i), p.~116]{Matsumura2} the assumptions about \(f\) and \(g\) imply \(\dim R=\dim S\). To simplify notations we will write \(\varphi:=g\circ f\) and \(\psi:=f\circ g\). Then \(\varphi\) is a local self-map of \(R\) and \(\psi\) is a local self-map of \(S\). Moreover, by Corollary~\ref{flc}, \(\varphi\) and \(\psi\) are of finite length. Writing \(\varphi^{n+1}\) in the form \(g\circ\psi^n\circ f\) and using Corollary~\ref{proplambda}, we obtain
\[\lambda(\varphi^{n+1})\leq\lambda(g)\cdot\lambda(\psi^n)\cdot\lambda(f).\]
By symmetry we also obtain
\[\lambda(\psi^{n+1})\leq\lambda(f)\cdot\lambda(\varphi^n)\cdot\lambda(g).\]
Applying logarithm to these inequalities, then dividing by \(n+1\) and letting \(n\) go to infinity, we obtain \[h_{\mathrm{alg}}(g\circ f,R)=h_{\mathrm{alg}}(f\circ g,S).\]
\end{proof}
\begin{corollary}[Invariance]\label{isoinva}
Let \((R,\mathfrak{m})\) and \((S,\mathfrak{n})\) be two Noetherian local rings. Suppose \(f:R\rightarrow S\) is an isomorphism, and let \(\varphi\) be a self-map of of finite length of \(R\) . Then \(h_{\mathrm{alg}}(f\circ\varphi\circ f^{-1},S)=h_{\mathrm{alg}}(\varphi,R)\).
\end{corollary}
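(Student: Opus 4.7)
The plan is to deduce this corollary directly from Lemma~\ref{factored} by choosing the right factorization. Setting $g := \varphi \circ f^{-1} \colon S \to R$, I observe that $g \circ f = \varphi \circ f^{-1} \circ f = \varphi$ as a self-map of $R$, while $f \circ g = f \circ \varphi \circ f^{-1}$ as a self-map of $S$. So the two composites appearing in Lemma~\ref{factored} are exactly the two self-maps whose entropies I want to compare.

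Next I would verify the hypotheses of Lemma~\ref{factored}, namely that both $f \colon R \to S$ and $g \colon S \to R$ are local homomorphisms of finite length. Since $f$ is a ring isomorphism between local rings, it is automatically local, sends $\mathfrak{m}$ onto $\mathfrak{n}$, and hence $f(\mathfrak{m})S = \mathfrak{n}$ is $\mathfrak{n}$-primary, giving $\lambda(f) = 1$. The same reasoning applies to $f^{-1} \colon S \to R$. Then $g = \varphi \circ f^{-1}$ is the composition of two local homomorphisms of finite length, so by Corollary~\ref{flc} it is itself local and of finite length.

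With both hypotheses checked, Lemma~\ref{factored} immediately gives
\[
h_{\mathrm{alg}}(\varphi, R) = h_{\mathrm{alg}}(g \circ f, R) = h_{\mathrm{alg}}(f \circ g, S) = h_{\mathrm{alg}}(f \circ \varphi \circ f^{-1}, S),
\]
which is the desired invariance. I do not anticipate a real obstacle here: the only subtlety is the bookkeeping to recognize that the transport-of-structure map $f \circ \varphi \circ f^{-1}$ appears as one side of the symmetric expression in Lemma~\ref{factored} once one factors $\varphi$ through $S$ using $f$ and $f^{-1}$.
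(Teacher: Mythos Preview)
Your proof is correct and follows essentially the same approach as the paper: both apply Lemma~\ref{factored} to a factorization through $S$. The only cosmetic difference is that the paper takes the pair $(f\circ\varphi,\,f^{-1})$ while you take $(f,\,\varphi\circ f^{-1})$; either choice yields $\varphi$ and $f\circ\varphi\circ f^{-1}$ as the two composites, and your explicit verification of the finite-length hypotheses via Corollary~\ref{flc} is a welcome addition.
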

\begin{proof}
It suffices to apply Lemma~\ref{factored} to the following homomorphisms
\[\begin{tikzpicture} \matrix (m) [matrix of math nodes, row sep=1em, column sep=1.7em, text height=1ex, text depth=0.25ex] { R & & S \\}; \path[->, font=\scriptsize]
(m-1-3) edge [bend left=30] node[auto] {$ f^{-1} $} (m-1-1); \path[->, font=\scriptsize] (m-1-1) edge [bend right=-30] node[auto] {$ f\circ\varphi $} (m-1-3);
\end{tikzpicture}\]
\end{proof}
\begin{corollary}\label{phident}
Let \((R,\mathfrak{m})\) be a Noetherian local ring, let \(\varphi\) be a self-map of finite length of \(R\). Let \(\mathfrak{a}\) be an ideal of \(R\) with the property \(\varphi(\mathfrak{a})R\subset\mathfrak{a}\), and write \(\overline{\varphi}\) for both self-maps induced by \(\varphi\) on \(R/\mathfrak{a}\) and \(R/\varphi(\mathfrak{a})R\). Then
\[h_{\mathrm{alg}}(\overline{\varphi},R/\mathfrak{a})=h_{\mathrm{alg}}(\overline{\varphi},R/\varphi(\mathfrak{a})R).\]
\end{corollary}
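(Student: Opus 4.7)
The plan is to present Corollary~\ref{phident} as a direct application of Lemma~\ref{factored}, by writing each of the two induced self-maps as a composition of a projection with a lift/descent of $\varphi$.

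First, I would set up the two local rings and the auxiliary maps. Both $\mathfrak{a}$ and $\varphi(\mathfrak{a})R$ are $\varphi$-invariant: the hypothesis gives $\varphi(\mathfrak{a})R\subseteq\mathfrak{a}$, and applying $\varphi$ to this inclusion yields $\varphi(\varphi(\mathfrak{a}))\subseteq\varphi(\mathfrak{a})$, whence $\varphi(\varphi(\mathfrak{a})R)R\subseteq\varphi(\mathfrak{a})R$. Define
\[ f:R/\mathfrak{a}\longrightarrow R/\varphi(\mathfrak{a})R \]
to be the ring homomorphism induced by $\varphi$ (well-defined because $\varphi$ sends $\mathfrak{a}$ into $\varphi(\mathfrak{a})R$), and let
\[ g:R/\varphi(\mathfrak{a})R\longrightarrow R/\mathfrak{a} \]
be the canonical surjection (which exists because $\varphi(\mathfrak{a})R\subseteq\mathfrak{a}$).

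Next I would check that both $f$ and $g$ are local homomorphisms of finite length. Localness is immediate. For $g$, finite length is automatic since $g$ is surjective, so the extension of the maximal ideal is the whole maximal ideal. For $f$, the extended ideal is $(\varphi(\mathfrak{m})R+\varphi(\mathfrak{a})R)/\varphi(\mathfrak{a})R$; since $\mathfrak{a}\subseteq\mathfrak{m}$ gives $\varphi(\mathfrak{a})R\subseteq\varphi(\mathfrak{m})R$, this simplifies to $\varphi(\mathfrak{m})R/\varphi(\mathfrak{a})R$, which is primary for the maximal ideal of $R/\varphi(\mathfrak{a})R$ because $\varphi(\mathfrak{m})R$ is $\mathfrak{m}$-primary in $R$ by assumption on $\varphi$.

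The key computation is then the identification of the two composites: tracing an element shows that $g\circ f$ is precisely the self-map $\overline{\varphi}$ on $R/\mathfrak{a}$, while $f\circ g$ is precisely the self-map $\overline{\varphi}$ on $R/\varphi(\mathfrak{a})R$. Both induced self-maps are of finite length by Proposition~\ref{quotients} (or by Corollary~\ref{flc} applied to the factorizations just obtained). Applying Lemma~\ref{factored} to $f$ and $g$ then gives
\[ h_{\mathrm{alg}}(\overline{\varphi},R/\mathfrak{a}) = h_{\mathrm{alg}}(g\circ f,R/\mathfrak{a}) = h_{\mathrm{alg}}(f\circ g,R/\varphi(\mathfrak{a})R) = h_{\mathrm{alg}}(\overline{\varphi},R/\varphi(\mathfrak{a})R), \]
as required. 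There is no serious obstacle here; the only subtlety is bookkeeping the two different quotients and verifying that the extended maximal ideal under $f$ really is primary, which reduces at once to the finite-length hypothesis on $\varphi$ itself.
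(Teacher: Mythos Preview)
Your proposal is correct and follows exactly the same route as the paper: apply Lemma~\ref{factored} to the map $R/\mathfrak{a}\to R/\varphi(\mathfrak{a})R$ induced by $\varphi$ and the canonical projection $R/\varphi(\mathfrak{a})R\to R/\mathfrak{a}$, then identify the two composites with the two induced self-maps $\overline{\varphi}$. The paper's proof is terser (it omits the finite-length verifications you spell out), but the argument is identical.
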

\begin{proof}
It suffices to apply Lemma~\ref{factored} to the following homomorphisms
\[\begin{tikzpicture} \matrix (m) [matrix of math nodes, row sep=1em, column sep=0.6em, text height=1ex, text depth=0.25ex] { R/\mathfrak{a} & &\ \ \ \ \ \ R/\varphi(\mathfrak{a})R \\}; \path[->, font=\scriptsize]
(m-1-3) edge [bend left=28] node[auto] {$ \overline{\mathrm{id}} $} (m-1-1); \path[->, font=\scriptsize] (m-1-1) edge [bend right=-27] node[auto] {$ \varphi^\prime $} (m-1-3);
\end{tikzpicture}\]
where \(\varphi^\prime\) and \(\overline{\mathrm{id}}\) are canonical homomorphisms induced by \(\varphi\) and identity of \(R\), respectively.
\end{proof}
\begin{proposition}\label{Minimals}
Let \(R\) be a Noetherian local ring and let \(\varphi\) be an \emph{integral} local self-map of \(R\). Let \(^a\varphi\) be the self-map induced by \(\varphi\) on \(\Spec(R)\). Assume that all minimal prime ideals of \(R\) contain \(\ker\varphi\). Then the restriction of \(^a\varphi\) to \(\Min(R)\) is a permutation of the set \(\Min(R)\).
\end{proposition}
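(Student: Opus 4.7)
The plan is to factor $\varphi$ as $\varphi = \varphi' \circ \pi$, where $\pi\colon R \twoheadrightarrow R/\ker\varphi$ is the quotient map and $\varphi'\colon R/\ker\varphi \hookrightarrow R$ is the induced injective homomorphism (which remains integral since $\varphi$ is). Passing to spectra one has $^a\varphi = {^a\pi} \circ {^a\varphi'}$, so the proposition will follow once I show that each of $^a\pi$ and $^a\varphi'$ restricts to a bijection between the appropriate sets of minimal primes.

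The first factor is handled by an observation about $\ker\varphi$: the hypothesis forces $\ker\varphi \subseteq \bigcap_{\mathfrak{p}\in\Min(R)} \mathfrak{p} = \sqrt{(0)}$, and $R$ being Noetherian makes the nilradical nilpotent, so $\ker\varphi$ itself is nilpotent. In particular every prime of $R$ contains $\ker\varphi$, and $^a\pi$ is the standard bijection $\Spec(R/\ker\varphi) \to \Spec(R)$, which clearly restricts to a bijection $\Min(R/\ker\varphi) \to \Min(R)$. The resulting cardinality equality $\lvert\Min(R/\ker\varphi)\rvert = \lvert\Min(R)\rvert$ is the lever I plan to use below.

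For the second factor I would use lying-over for the integral extension $\varphi'$ to show that $^a\varphi'(\Min(R)) \supseteq \Min(R/\ker\varphi)$: given $\mathfrak{p} \in \Min(R/\ker\varphi)$, lying-over supplies some $\mathfrak{q}_1 \in \Spec(R)$ with $^a\varphi'(\mathfrak{q}_1) = \mathfrak{p}$, and any $\mathfrak{q} \in \Min(R)$ contained in $\mathfrak{q}_1$ satisfies $^a\varphi'(\mathfrak{q}) \subseteq \mathfrak{p}$, whence $^a\varphi'(\mathfrak{q}) = \mathfrak{p}$ by minimality of $\mathfrak{p}$.

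The main obstacle is that for a general integral extension $A \hookrightarrow B$, contraction need not send $\Min(B)$ into $\Min(A)$ (going-down can fail when $A$ is not a normal domain), so one cannot directly argue that $^a\varphi'|_{\Min(R)}$ lands inside $\Min(R/\ker\varphi)$. I intend to bypass this by a pure counting argument: the image $^a\varphi'(\Min(R))$ contains $\Min(R/\ker\varphi)$ and has size at most $\lvert\Min(R)\rvert = \lvert\Min(R/\ker\varphi)\rvert$, which forces the image to equal $\Min(R/\ker\varphi)$ and forces $^a\varphi'|_{\Min(R)}\colon \Min(R) \to \Min(R/\ker\varphi)$ to be injective, hence bijective. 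Composing with the bijection $^a\pi\colon \Min(R/\ker\varphi) \to \Min(R)$ then exhibits $^a\varphi|_{\Min(R)}$ as a composition of two bijections, proving it is a permutation of $\Min(R)$.
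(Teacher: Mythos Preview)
Your proof is correct and follows essentially the same route as the paper: factor \(\varphi\) through \(R/\ker\varphi\), use lying-over for the injective integral map to show that every minimal prime of \(R/\ker\varphi\) is hit by some minimal prime of \(R\), and finish by counting on the finite set \(\Min(R)\). The only minor variation is that where the paper invokes incomparability to argue that the lying-over preimage is itself minimal, you instead shrink an arbitrary lying-over preimage to a minimal prime below it and use minimality of the target; this lets you avoid incomparability but is otherwise the same argument.
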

\begin{proof}
Let \(\overline{\varphi}\) be the map \((R/\ker\varphi)\rightarrow R\) induced by \(\varphi\). We have a commuting diagram
\[\begin{tikzpicture} 
\matrix (m) [matrix of math nodes, row sep=2.8em, column sep=2.3em, text height=2ex, text depth=0.25ex] { R & R \\ (R/\ker\varphi) & \\}; 
\path[->, font=\scriptsize]
(m-1-1) edge node[auto] {$ \varphi $} (m-1-2) 
              edge node[left] {$ \pi $}(m-2-1);
\path[right hook->, font=\scriptsize] 
(m-2-1) edge node[below right] {$ \overline{\varphi} $} (m-1-2);
\end{tikzpicture}\]
Let \(\mathfrak{q}\) be an element of \(\Min(R)\). Then by assumption \(\ker\varphi\subset\mathfrak{q}\), hence \(\pi(\mathfrak{q})\) is an element of \(\Min(R/\ker\varphi)\). Since \(\varphi\) is integral, there is an element \(\mathfrak{p}\in\Spec(R)\) such that \(\pi(\mathfrak{q})=\overline{\varphi}^{\:-1}(\mathfrak{p})\). Thus, \(\mathfrak{q}=\varphi^{-1}(\mathfrak{p})\), or equivalently \(\mathfrak{q}={^a\varphi}(\mathfrak{p})\). We claim that \(\mathfrak{p}\in\Min(R)\). If \(\mathfrak{p}\) were not a minimal prime ideal of \(R\), then it would contain a minimal prime ideal \(\mathfrak{p}^\prime\). In that case we would have \(\pi(\mathfrak{q})=\overline{\varphi}^{\:-1}(\mathfrak{p})\supseteq\overline{\varphi}^{\:-1}(\mathfrak{p}^\prime)\) and the minimality of \(\pi(\mathfrak{q})\) would force \(\overline{\varphi}^{\:-1}(\mathfrak{p}^\prime)=\pi(\mathfrak{q})\). But since \(\varphi\) is integral, there can be no inclusion between prime ideals of \(R\) lying over \(\pi(\mathfrak{q})\)~\cite[Theorem~9.3, p.~66]{Matsumura2}. This establishes our claim that \(\mathfrak{p}\in\Min(R)\). Thus, we see that \[\Min(R)\subseteq{^a\varphi}\left(\Min(R)\right).\] Now, since \(\Min(R)\) is a finite set, we must have \(\Min(R)={^a\varphi}\left(\Min(R)\right)\). Hence the restriction of \(^a\varphi\) to \(\Min(R)\) is a bijective map of the set \(\Min(R)\) to itself.
\end{proof}
\begin{corollary}\label{fistintegral1}
Let \((R,\mathfrak{m})\) be a Noetherian local ring, and let \(\varphi\) be an integral local self-map of \(R\). Assume that all minimal prime ideals of \(R\) contain \(\ker\varphi\). Let \(p\) be the smallest integer such that \(^a\varphi^p\) is the identity map on \(\Min(R)\) (see Proposition~\ref{Minimals}.) For each \(\mathfrak{p}_i\in\Min(R)\) let \(\overline{\varphi}_i\) be the self-map induced by \(\varphi^p\) on \(R/\mathfrak{p}_i\). Then
\[h_{\mathrm{alg}}(\varphi,R)=\frac{1}{p}\cdot\max\{h_{\mathrm{alg}}(\overline{\varphi}_i,R/\mathfrak{p}_i)\mid\mathfrak{p}_i\in\Min(R)\}.\]
\end{corollary}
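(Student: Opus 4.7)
The plan is to reduce the corollary to Theorem~\ref{irreducible components} applied not to $\varphi$ itself but to the iterate $\varphi^p$. By Proposition~\ref{Minimals} the restriction of $^a\varphi$ to $\Min(R)$ is a permutation of a finite set, so it has finite order; let $p$ be that order (the smallest integer for which $^a\varphi^p$ is the identity on $\Min(R)$, as in the statement). By Corollary~\ref{crucial4self}, the iterate $\varphi^p$ is again of finite length, so $h_{\mathrm{alg}}(\varphi^p,R)$ is defined.

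The key observation is that the condition $^a\varphi^p(\mathfrak{p}_i)=\mathfrak{p}_i$ for every $\mathfrak{p}_i\in\Min(R)$ means exactly that $\varphi^p(\mathfrak{p}_i)\subseteq(\varphi^p)^{-1}(\mathfrak{p}_i)\cdot\text{stuff}$; more precisely, $\varphi^p(\mathfrak{p}_i)R\subseteq\mathfrak{p}_i$, so each minimal prime of $R$ is $\varphi^p$-invariant. Consequently $\varphi^p$ satisfies the hypotheses of Theorem~\ref{irreducible components}, and that theorem yields
\[h_{\mathrm{alg}}(\varphi^p,R)=\max\bigl\{h_{\mathrm{alg}}(\overline{\varphi}_i,R/\mathfrak{p}_i)\mid \mathfrak{p}_i\in\Min(R)\bigr\},\]
where $\overline{\varphi}_i$ is the self-map of $R/\mathfrak{p}_i$ induced by $\varphi^p$ (matching the notation in the statement of the corollary).

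To finish, I invoke Proposition~\ref{exponent}, which gives $h_{\mathrm{alg}}(\varphi^p,R)=p\cdot h_{\mathrm{alg}}(\varphi,R)$. Dividing the displayed equation by $p$ produces exactly the claimed formula. There is no real obstacle here: the entire content of the corollary is the combination of the permutation behavior from Proposition~\ref{Minimals} (which supplies $p$ and the $\varphi^p$-invariance of each minimal prime) with the multiplicativity of entropy under iteration from Proposition~\ref{exponent} and the max-formula for $\varphi^p$ from Theorem~\ref{irreducible components}. The only small subtlety to be careful about is confirming that the $\overline{\varphi}_i$ referenced in the corollary is the self-map induced by $\varphi^p$ (not $\varphi$), which is why the factor $1/p$ appears on the right-hand side.
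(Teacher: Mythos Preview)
Your proposal is correct and follows essentially the same route as the paper: apply Theorem~\ref{irreducible components} to $\varphi^p$ (whose invariance on each $\mathfrak{p}_i$ comes from $^a\varphi^p=\mathrm{id}$ on $\Min(R)$), then divide by $p$ using Proposition~\ref{exponent}. The paper's proof is more terse but identical in substance; the only point it makes explicit that you leave implicit is that an integral local self-map is automatically of finite length, which is what justifies invoking Corollary~\ref{crucial4self} and the definition of $h_{\mathrm{alg}}$ in the first place.
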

\begin{proof}
First we should note that since \(\varphi\) is integral, it is of finite length. By Theorem~\ref{irreducible components} 
\[h_{\mathrm{alg}}(\varphi^p,R)=\max\{h_{\mathrm{alg}}(\overline{\varphi}_i,R/\mathfrak{p}_i)\mid \mathfrak{p}_i\in\Min(R)\}.\] Now by Proposition~\ref{exponent} \(h_{\mathrm{alg}}(\varphi^p,R)=p\cdot h_{\mathrm{alg}}(\varphi,R)\) and the result follows.
\end{proof}

\begin{corollary}
Let \(R\) be a Noetherian local ring and let \(\varphi\) be an integral local self-map of \(R\). Assume that all minimal prime ideals of \(R\) contain \(\ker\varphi\). Then an element \(x\in R\) belongs to a minimal prime ideal of \(R\), if and only if \(\varphi(x)\) belongs to a minimal prime ideal of \(R\).
\end{corollary}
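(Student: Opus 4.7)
The plan is to deduce this as a direct consequence of Proposition~\ref{Minimals}, unpacking only what the word ``permutation'' means at the level of elements. Recall that in the paper's notation ${^a\varphi}(\mathfrak{p})=\varphi^{-1}(\mathfrak{p})$, so Proposition~\ref{Minimals} tells us that under the hypotheses (integral self-map, every minimal prime contains $\ker\varphi$), the assignment $\mathfrak{p}\mapsto\varphi^{-1}(\mathfrak{p})$ is a bijection $\Min(R)\to\Min(R)$.

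For the forward implication, I would start with $x\in\mathfrak{q}$ for some $\mathfrak{q}\in\Min(R)$. By the surjectivity half of Proposition~\ref{Minimals}, there exists $\mathfrak{p}\in\Min(R)$ with ${^a\varphi}(\mathfrak{p})=\mathfrak{q}$, that is, $\varphi^{-1}(\mathfrak{p})=\mathfrak{q}$. Then $x\in\varphi^{-1}(\mathfrak{p})$ translates immediately into $\varphi(x)\in\mathfrak{p}$, and $\mathfrak{p}\in\Min(R)$ is what we wanted.

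For the converse, suppose $\varphi(x)\in\mathfrak{p}$ for some $\mathfrak{p}\in\Min(R)$. Then $x\in\varphi^{-1}(\mathfrak{p})={^a\varphi}(\mathfrak{p})$, and Proposition~\ref{Minimals} guarantees that ${^a\varphi}(\mathfrak{p})$ is again an element of $\Min(R)$, so $x$ lies in a minimal prime of $R$.

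There is essentially no obstacle beyond invoking Proposition~\ref{Minimals} correctly; the content of the corollary is just the pointwise translation of ``${^a\varphi}$ permutes $\Min(R)$'' into a statement about membership of ring elements in the union $\bigcup_{\mathfrak{p}\in\Min(R)}\mathfrak{p}$, which is exactly the ${^a\varphi}$-invariance of that union under both image and preimage.
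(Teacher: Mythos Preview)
Your proof is correct and is essentially identical to the paper's own argument: both directions are obtained by invoking Proposition~\ref{Minimals} exactly as you describe, using surjectivity of ${^a\varphi}$ on $\Min(R)$ for one implication and the fact that ${^a\varphi}(\mathfrak{p})=\varphi^{-1}(\mathfrak{p})\in\Min(R)$ for the other. The only cosmetic difference is that the paper handles the two implications in the opposite order.
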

\begin{proof}
Let \(x\) be an element of \(R\). If \(\varphi(x)\in\mathfrak{p}\) for some \(\mathfrak{p}\in\Min(R)\), then we have \(x\in\varphi^{-1}(\mathfrak{p})\) and by Proposition~\ref{Minimals} \(\varphi^{-1}(\mathfrak{p})\in\Min(R)\). Conversely, suppose \(x\in\mathfrak{q}\) for some \(\mathfrak{q}\in\Min(R)\). Then by Proposition~\ref{Minimals} there is a minimal prime ideal \(\mathfrak{p}\in\Min(R)\) such that \(\mathfrak{q}=\varphi^{-1}(\mathfrak{p})\). Hence \(\varphi(x)\in\mathfrak{p}\). 
\end{proof}
\begin{corollary}
Let \(R\) be a Noetherian local ring and let \(\varphi\) be an integral local self-map of \(R\). Assume that all minimal prime ideals of \(R\) contain \(\ker\varphi\). If \(\mathfrak{p}\not\in\Min(R)\), then \(\varphi^{-1}(\mathfrak{p})\not\in\Min(R)\).
\end{corollary}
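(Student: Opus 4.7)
The plan is a short proof by contradiction. Assume $\mathfrak{p}\notin\Min(R)$, and pick a minimal prime $\mathfrak{p}'\in\Min(R)$ with $\mathfrak{p}'\subsetneq\mathfrak{p}$. Applying $\varphi^{-1}$ preserves inclusion, so $\varphi^{-1}(\mathfrak{p})\supseteq\varphi^{-1}(\mathfrak{p}')$. By Proposition~\ref{Minimals}, the restriction of ${^a\varphi}$ to $\Min(R)$ is a permutation of $\Min(R)$, so $\varphi^{-1}(\mathfrak{p}')\in\Min(R)$.

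Next, suppose for contradiction that $\varphi^{-1}(\mathfrak{p})\in\Min(R)$ as well. Then both $\varphi^{-1}(\mathfrak{p})$ and $\varphi^{-1}(\mathfrak{p}')$ are minimal primes of $R$ with one containing the other, forcing $\varphi^{-1}(\mathfrak{p})=\varphi^{-1}(\mathfrak{p}')$. Now factor $\varphi$ through the canonical projection $\pi\colon R\to R/\ker\varphi$ and the induced \emph{integral} injection $\overline{\varphi}\colon R/\ker\varphi\hookrightarrow R$, exactly as in the diagram at the start of the proof of Proposition~\ref{Minimals}. The equality above translates into $\overline{\varphi}^{\:-1}(\mathfrak{p})=\overline{\varphi}^{\:-1}(\mathfrak{p}')$, i.e.\ $\mathfrak{p}$ and $\mathfrak{p}'$ contract to the same prime of $R/\ker\varphi$ under the integral extension $\overline{\varphi}$.

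The key step is now to invoke incomparability for integral extensions (\cite[Theorem~9.3, p.~66]{Matsumura2}, the same reference used inside Proposition~\ref{Minimals}): two distinct primes of $R$ lying over the same prime of $R/\ker\varphi$ cannot be comparable. Since $\mathfrak{p}\supsetneq\mathfrak{p}'$, this forces $\mathfrak{p}=\mathfrak{p}'$, contradicting the strict inclusion. Hence $\varphi^{-1}(\mathfrak{p})\notin\Min(R)$, as desired.

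The argument is essentially routine once Proposition~\ref{Minimals} is in hand; no step presents a genuine obstacle. The only point that requires a little care is the invocation of incomparability for the extension $\overline{\varphi}$ rather than $\varphi$ itself, since $\varphi$ need not be injective, but passing to $R/\ker\varphi$ makes the integral extension honest and lets us apply the incomparability theorem verbatim.
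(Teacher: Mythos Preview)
Your argument is correct and is essentially the same as what the paper intends by ``this follows quickly from the proof of Proposition~\ref{Minimals}'': assume $\varphi^{-1}(\mathfrak{p})$ is minimal, pick a minimal $\mathfrak{p}'\subsetneq\mathfrak{p}$, force $\overline{\varphi}^{\,-1}(\mathfrak{p})=\overline{\varphi}^{\,-1}(\mathfrak{p}')$, and derive a contradiction from incomparability for the integral injection $\overline{\varphi}$. The only cosmetic difference is that you obtain the equality $\varphi^{-1}(\mathfrak{p})=\varphi^{-1}(\mathfrak{p}')$ by invoking the \emph{statement} of Proposition~\ref{Minimals} (so that $\varphi^{-1}(\mathfrak{p}')$ is itself minimal), whereas the paper's implicit argument obtains $\overline{\varphi}^{\,-1}(\mathfrak{p})=\overline{\varphi}^{\,-1}(\mathfrak{p}')$ directly from minimality of $\pi(\varphi^{-1}(\mathfrak{p}))$ in $R/\ker\varphi$; both routes lead to the same incomparability step.
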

\begin{proof}
This follows quickly from the proof of Proposition~\ref{Minimals}.
\end{proof}\begin{remark}\label{theindimap}
If \(R\) is a Noetherian local ring and \(\varphi\) is a local self-map of \(R\), then for every integer \(n\geq1\), \(\varphi\left(\ker\varphi^n\right)\subset\ker\varphi^{n-1}\subset\ker\varphi^n\). Hence \(\varphi\) induces a local self-map of \(R/\ker\varphi^n\). 
\end{remark}
\begin{proposition}\label{preporatory}
Let \((R,\mathfrak{m})\) be a Noetherian local ring, let \(\varphi\) be a self-map of of finite length of \(R\). For any integer \(n\geq1\) let \(\overline{\varphi}_n\) be the local self-map induced by \(\varphi\) on \(R/\ker\varphi^n\) (see Remark~\ref{theindimap}). Then\smallskip

\begin{compactenum}
\item[\textbf{a})] \(h_{\mathrm{alg}}(\varphi,R)=h_{\mathrm{alg}}(\overline{\varphi}_n,R/\ker\varphi^n)\).
\item[\textbf{b})] For \(n\) large enough \(\overline{\varphi}_n:R/\ker\varphi^n\rightarrow R/\ker\varphi^n\) is injective.
\item[\textbf{c})] If \(\varphi\) is integral, then so is \(\overline{\varphi}_n\) (see~\cite[Chapter~V, Proposition~2, p.~305]{Bourb}).
\end{compactenum}
\end{proposition}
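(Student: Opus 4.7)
The three parts admit largely independent arguments, with part \textbf{a} being the least immediate.

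For part \textbf{a}, my strategy is to factor $\varphi^n$ through $R/\ker\varphi^n$ and then apply Lemma~\ref{factored}. Let $\pi_n : R \to R/\ker\varphi^n$ be the canonical surjection and let $\iota_n : R/\ker\varphi^n \to R$ be the map $r + \ker\varphi^n \mapsto \varphi^n(r)$, which is well defined and injective by the very definition of $\ker\varphi^n$. I would first verify the hypotheses of Lemma~\ref{factored}: $\pi_n$ is surjective, so it is a local homomorphism with $\lambda(\pi_n)=1$; and since $\iota_n(\mathfrak{m}/\ker\varphi^n)R = \varphi^n(\mathfrak{m})R$, which is $\mathfrak{m}$-primary by Corollary~\ref{crucial4self}, the map $\iota_n$ is also local of finite length. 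Directly from the definitions, $\iota_n \circ \pi_n = \varphi^n$ and $\pi_n \circ \iota_n = (\overline{\varphi}_n)^n$. Lemma~\ref{factored} then yields
\[
h_{\mathrm{alg}}(\varphi^n,R) \;=\; h_{\mathrm{alg}}\bigl((\overline{\varphi}_n)^n,\,R/\ker\varphi^n\bigr),
\]
and applying Proposition~\ref{exponent} to both sides lets me cancel the factor of $n$ and conclude.

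For part \textbf{b}, I would observe that $\overline{\varphi}_n$ is injective if and only if $\varphi^{-1}(\ker\varphi^n) \subseteq \ker\varphi^n$, i.e.\ if and only if $\ker\varphi^{n+1} \subseteq \ker\varphi^n$ (the reverse inclusion being automatic). Since $R$ is Noetherian, the ascending chain $\ker\varphi \subseteq \ker\varphi^2 \subseteq \cdots$ stabilizes, so for all $n$ past the stabilization index, $\ker\varphi^{n+1} = \ker\varphi^n$ and $\overline{\varphi}_n$ is injective.

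For part \textbf{c}, I would argue directly by reducing integral dependence relations modulo $\ker\varphi^n$: given $s \in R$, pick a monic relation $s^k + \varphi(a_{k-1})s^{k-1} + \cdots + \varphi(a_0) = 0$ witnessing that $\varphi$ is integral; reducing modulo $\ker\varphi^n$ produces a monic relation on $\bar s$ with coefficients in $\overline{\varphi}_n(R/\ker\varphi^n)$. This is essentially the content of the cited Bourbaki reference, so one can alternatively invoke it formally. The only mildly delicate point in the whole proposition is in part \textbf{a}: one must check that the composition $\pi_n \circ \iota_n$ is exactly $(\overline{\varphi}_n)^n$ rather than $\overline{\varphi}_n$ itself, so that the factor of $n$ coming from Proposition~\ref{exponent} matches on both sides and cancels cleanly.
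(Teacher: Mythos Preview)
Your proposal is correct and matches the paper's argument in every part. For \textbf{a}, the paper invokes Corollary~\ref{phident} applied to $\varphi^n$ with $\mathfrak{a}=\ker\varphi^n$ (so that $\varphi^n(\mathfrak{a})R=(0)$), which unwinds to precisely your factorization $\varphi^n=\iota_n\circ\pi_n$ and the application of Lemma~\ref{factored}; parts \textbf{b} and \textbf{c} are handled identically.
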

\begin{proof}
\textbf{a}) We apply Corollary~\ref{phident} to the self-map \(\varphi^n\) of \(R\), taking \(\ker\varphi^n\) as the ideal \(\mathfrak{a}\) in that corollary. Since \(\varphi^n(\ker\varphi^n)R=(0)\), we obtain
\[h_{\mathrm{alg}}(\overline{\varphi}_n^n,R/\ker\varphi^n)=h_{\mathrm{alg}}(\overline{\varphi}_n^n,R/\varphi^n(\ker\varphi^n)R)=h_{\mathrm{alg}}(\varphi^n,R).\] Now the result follows from Proposition~\ref{exponent}.\par \textbf{b}) \(R\) is Noetherian, so the ascending chain \(\ker\varphi\subset\ker\varphi^2\subset\ker\varphi^3\subset\ldots\) is stationary. Let \(n_0\) be such that \(\ker\varphi^n=\ker\varphi^{n+1}\) for \(n\geq n_0\). We will show that if \(n\geq n_0\), then \(\overline{\varphi}_n:R/\ker\varphi^n\rightarrow R/\ker\varphi^n\) is injective. Let \(\overline{x}\in R/\ker\varphi^n\). Saying \(\overline{\varphi}_n(\overline{x})=0\) is equivalent to saying \(\varphi(x)\in\ker\varphi^n\), which is equivalent to saying \(x\in\ker\varphi^{n+1}\). Since \(\ker\varphi^{n+1}=\ker\varphi^n\), we see that \(x\in\ker\varphi^n\), or \(\overline{x}=0\) in \(R/\ker\varphi^n\). Thus, \(\overline{\varphi}_n\) is injective.\par \textbf{c}) We have a commutative diagram 
\[\begin{tikzpicture} 
\matrix (m) [matrix of math nodes, row sep=2.7em, column sep=2em, text height=2ex, text depth=0.25ex] { R & R \\ (R/\ker\varphi^n) & (R/\ker\varphi^n).\\}; 
\path[->, font=\scriptsize]
(m-1-1) edge node[auto] {$ \varphi $} (m-1-2) 
              edge node [left] {$ \pi $} (m-2-1) 
(m-2-1) edge node[auto] {$ \overline{\varphi}_n $} (m-2-2)
(m-1-2) edge node [auto] {$ \pi $}  (m-2-2);
\end{tikzpicture}\]
Let \(\pi(x)\in R/\ker\varphi^n\) be an arbitrary element. Since \(x\in R\) is integral over the subring \(\varphi(R)\) of \(R\), it satisfies an equation \[x^n+\varphi(a_{n-1})x^{n-1}+\ldots+\varphi(a_1)x+\varphi(a_0)=0\] with \(a_i\in R\), for \(0\leq i\leq n-1\). Applying \(\pi\) and using the commutativity of the diagram, we conclude 
\[\left(\pi(x)\right)^n+\overline{\varphi}_n\left(\pi(a_{n-1})\right)\left(\pi(x)\right)^{n-1}+\ldots+\overline{\varphi}_n\left(\pi(a_1)\right)\pi(x)+\overline{\varphi}_n\left(\pi(a_0)\right)=0.\]
Thus \(\pi(x)\) is integral over the subring \(\overline{\varphi}_n(R/\ker\varphi^n)\) of \(R/\ker\varphi^n\).
\end{proof}
\begin{theorem}\label{irreducible components2}
Let \((R,\mathfrak{m})\) be a Noetherian local ring, let \(\varphi\) be an integral local self-map of \(R\), and let \(\overline{\varphi}_n\) be the self-map induced by \(\varphi\) on \(R/\ker\varphi^n\)\emph{(see Remark~\ref{theindimap})}. Assume \(n\) is large enough so that \(\overline{\varphi}_n\) is injective \emph{(see Proposition~\ref{preporatory}-\textbf{b})}. Let \(p\) be the smallest integer such that \(^a\overline{\varphi}_n^{\:p}\) is the identity map on \(\Min(R/\ker\varphi^n)\) \emph{(see Propositions~\ref{preporatory}-\textbf{c} and~\ref{Minimals})}. If if \(\mathfrak{p}_i\in\Spec(R)\) is such that \(\overline{\mathfrak{p}}_i:=(\mathfrak{p}_i/\ker\varphi^n)\in\Min(R/\ker\varphi^n)\), then \(\varphi^p(\mathfrak{p}_i)R\subset\mathfrak{p}_i\). Moreover, if we denote the local self-map induced by \(\varphi^p\) on \(R/\mathfrak{p}_i\) by \(\overline{\varphi}_{\mathfrak{p}_i}\), then
 \[h_{\mathrm{alg}}(\varphi,R)=\displaystyle{\frac{1}{p}}\cdot\max\left\{h_{\mathrm{alg}}(\overline{\varphi}_{\mathfrak{p}_i},R/\mathfrak{p}_i)\mid(\mathfrak{p}_i/\ker\varphi^n)\in\Min(R/\ker\varphi^n)\right\}.\]
\end{theorem}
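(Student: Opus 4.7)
The plan is to reduce the statement to Corollary~\ref{fistintegral1} applied to the pair $(R/\ker\varphi^n,\overline{\varphi}_n)$, and then to transport the resulting entropy formula back to $R$ via the natural isomorphisms.

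First, by Proposition~\ref{preporatory}-\textbf{a} we have $h_{\mathrm{alg}}(\varphi,R)=h_{\mathrm{alg}}(\overline{\varphi}_n,R/\ker\varphi^n)$, so it suffices to compute the right-hand side. Write $\overline{R}:=R/\ker\varphi^n$. By Proposition~\ref{preporatory}-\textbf{c}, $\overline{\varphi}_n$ is integral, and by Proposition~\ref{preporatory}-\textbf{b} it is injective for $n$ large enough, so $\ker\overline{\varphi}_n=(0)$ lies in every minimal prime of $\overline{R}$. Hence Corollary~\ref{fistintegral1} applies to $(\overline{R},\overline{\varphi}_n)$ and yields
\[h_{\mathrm{alg}}(\overline{\varphi}_n,\overline{R})=\frac{1}{p}\cdot\max\{h_{\mathrm{alg}}(\overline{\overline{\varphi}}_i,\overline{R}/\overline{\mathfrak{p}}_i)\mid\overline{\mathfrak{p}}_i\in\Min(\overline{R})\},\]
where $\overline{\overline{\varphi}}_i$ denotes the self-map induced by $\overline{\varphi}_n^{\:p}$ on $\overline{R}/\overline{\mathfrak{p}}_i$.

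Next I verify the inclusion $\varphi^p(\mathfrak{p}_i)R\subset\mathfrak{p}_i$. The fact that $^a\overline{\varphi}_n^{\:p}$ fixes $\overline{\mathfrak{p}}_i$ is the statement $(\overline{\varphi}_n^{\:p})^{-1}(\overline{\mathfrak{p}}_i)=\overline{\mathfrak{p}}_i$, which in particular forces $\overline{\varphi}_n^{\:p}(\overline{\mathfrak{p}}_i)\subset\overline{\mathfrak{p}}_i$. Lifting through the quotient $\pi\colon R\to\overline{R}$ gives $\varphi^p(\mathfrak{p}_i)\subset\mathfrak{p}_i+\ker\varphi^n$, and since $\ker\varphi^n\subset\mathfrak{p}_i$ (because $\overline{\mathfrak{p}}_i=\mathfrak{p}_i/\ker\varphi^n$ is a prime of $\overline{R}$), we conclude $\varphi^p(\mathfrak{p}_i)R\subset\mathfrak{p}_i$, so the induced self-map $\overline{\varphi}_{\mathfrak{p}_i}$ on $R/\mathfrak{p}_i$ is well-defined.

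Finally, the Third Isomorphism Theorem supplies a canonical ring isomorphism $f\colon\overline{R}/\overline{\mathfrak{p}}_i\to R/\mathfrak{p}_i$, and a straightforward diagram chase shows that $f\circ\overline{\overline{\varphi}}_i\circ f^{-1}$ coincides with $\overline{\varphi}_{\mathfrak{p}_i}$: both are the unique self-map on $R/\mathfrak{p}_i$ obtained by factoring $\varphi^p\colon R\to R$ through the quotient. Invariance of algebraic entropy under isomorphism (Corollary~\ref{isoinva}) then gives $h_{\mathrm{alg}}(\overline{\overline{\varphi}}_i,\overline{R}/\overline{\mathfrak{p}}_i)=h_{\mathrm{alg}}(\overline{\varphi}_{\mathfrak{p}_i},R/\mathfrak{p}_i)$, and substituting into the displayed formula together with the reduction $h_{\mathrm{alg}}(\varphi,R)=h_{\mathrm{alg}}(\overline{\varphi}_n,\overline{R})$ yields the claimed identity. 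No step is especially deep; the only delicate point is keeping track of the two layers of induced self-maps in this last paragraph, which reduces to a routine diagram chase once the relevant commutative squares are drawn.
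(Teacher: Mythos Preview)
Your proof is correct and follows essentially the same strategy as the paper: reduce to $R/\ker\varphi^n$ via Proposition~\ref{preporatory}-\textbf{a}, then apply the minimal-prime decomposition result (you package it as Corollary~\ref{fistintegral1}, the paper unfolds it as Theorem~\ref{irreducible components} plus Proposition~\ref{exponent}), and finally identify the entropies on the quotients. The only cosmetic difference is in this last step: the paper invokes Proposition~\ref{quotients} twice to see directly that $\lambda$ agrees on $(R/\ker\varphi^n)/(\mathfrak{p}_i/\ker\varphi^n)$ and $R/\mathfrak{p}_i$, whereas you appeal to the Third Isomorphism Theorem and invariance under isomorphism (Corollary~\ref{isoinva}); both are equally valid and equally short.
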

\begin{proof}
First we should note that since \(\varphi\) is integral, it is of finite length. The relation \(\varphi^p(\mathfrak{p}_i)R\subset\mathfrak{p}_i\) quickly follows from the assumption that \(^a\overline{\varphi}_n^{\:p}\) is the identity map on \(\Min(R/\ker\varphi^n)\) and the following commutative diagram
\[\begin{tikzpicture} 
\matrix (m) [matrix of math nodes, row sep=2.7em, column sep=2em, text height=2ex, text depth=0.25ex] { R & R \\ (R/\ker\varphi^n) & (R/\ker\varphi^n).\\}; 
\path[->, font=\scriptsize]
(m-1-1) edge node[auto] {$ \varphi^p $} (m-1-2) 
              edge (m-2-1) 
(m-2-1) edge node[auto] {$ \overline{\varphi}_n^{\:p} $} (m-2-2)
(m-1-2) edge (m-2-2);
\end{tikzpicture}\]
Now, by Propositions~\ref{preporatory}-\textbf{a} and~\ref{exponent} \[h_{\mathrm{alg}}(\varphi,R)=(1/p)\cdot h_{\mathrm{alg}}(\overline{\varphi}_n^{\:p},R/\ker\varphi^n),\]
and by Theorem~\ref{irreducible components} applied to the ring \(R/\ker\varphi^n\) and its self-map \(\overline{\varphi}_n^{\:p}\)
\[h_{\mathrm{alg}}(\overline{\varphi}_n^{\:p},R/\ker\varphi^n)=\]\[\max\Big\{h_{\mathrm{alg}}\Big(\overline{\varphi}_{\overline{\mathfrak{p}}_i},\frac{R/\ker\varphi^n}{\mathfrak{p}_i/\ker\varphi^n}\Big)\mid\overline{\mathfrak{p}}_i=(\mathfrak{p}_i/\ker\varphi^n)\in\Min(R/\ker\varphi^n)\Big\},\]
where we wrote \(\overline{\varphi}_{\overline{\mathfrak{p}}_i}\) for the local self-map induced by \(\overline{\varphi}_n^{\:p}\) on \((R/\ker\varphi^n)/(\mathfrak{p}_i/\ker\varphi^n)\). Finally, by applying Proposition~\ref{quotients} two times we can see
\[h_{\mathrm{alg}}\Big(\overline{\varphi}_{\overline{\mathfrak{p}}_i},\frac{R/\ker\varphi^n}{\mathfrak{p}_i/\ker\varphi^n}\Big)=h_{\mathrm{alg}}(\overline{\varphi}_{\mathfrak{p}_i},R/\mathfrak{p}_i).\] This concludes our proof.
\end{proof}
Embedded prime ideals of a ring may not behave as nicely as its minimal prime ideals under a self-map. As the next example will show, the image of a non-zero divisor under a self-map could land in an embedded prime ideal, and it is possible for an element of an embedded prime ideal to be mapped to a non-zerodivisor by a self-map.
\begin{example} Consider the polynomial ring \(K[x,y,z,w]\) over a field \(K\). Let \(\mathfrak{a}\) be the ideal  \((x^2,xy,xz,zw)\) and let \(A=K[x,y,z,w]/\mathfrak{a}\). Then \[\Ass(A) = \{(x,z), (x,w), (x,y,z)\}.\] Define a self-map \(\varphi\) of \(K[x,y,z,w]\) as follows 
\[x \stackrel{\varphi}{\mapsto} x^2;\ \  y \stackrel{\varphi}{\mapsto} y;\ \  z \stackrel{\varphi}{\mapsto} w;\ \  w\stackrel{\varphi}{\mapsto} z.\]
Since \(\varphi(\mathfrak{a})\subset\mathfrak{a}\), \(\varphi\) induces a self-map \(\overline{\varphi}\) of \(A\). 
The \(A\)-module \(\Fib A\) is finitely generated. In fact, it is generated by \(1\) and \(x\) as an \(A\)-module. Now, \(y+w\) is not a zerodivisor in \(A\) because it does not belong to any prime ideal in \(\Ass(A)\). But \(\overline{\varphi}(y+w) = y+z\) is a zerodivisor in \(A\); it is killed by \(x\), for example. On the other hand, \(y+z\) is a zerodivisor but is mapped to \(y+w\), which is not a zerodivisor.
\end{example}
\section{Finite self-maps, algebraic entropy and degree}\label{Findegr}
Another analogy between algebraic entropy $h_{\mathrm{alg}}(\varphi,R)$ and topological entropy $h_{\mathrm{top}}(\varphi,X)$ is exhibited by their relation to degree of $f$. In the topological setting, Misiurewicz and Przytycki showed in~\cite{MisPrz}, that if $f$ is a $C^1$ self-map of a smooth compact orientable manifold $M$, then $$h_{\mathrm{top}}(f,M)\geq\log|\deg(f)|.$$ In addition, for a holomorphic self-map $f$ of $\mathbb{CP}^n$, Gromov established the relation $$h_{\mathrm{top}}(f,\mathbb{CP}^n)=\log|\deg(f)|$$ in~\cite{Gromov}. Here $\deg(f)$ is the topological degree of $f$. In this section we will obtain similar relations between algebraic entropy and degree of a finite self-map (see Theorem~\ref{degree1}). First, we will make it clear what we mean by degree here.
\begin{definition}\label{degree}
Let \(R\) be a Noetherian local domain, and let \(\varphi\) be a finite self-map of \(R\). Then by \emph{degree} of \(\varphi\), \(\deg(\varphi)\), we mean the rank of the \(R\)-module \(\Fi R\).  Note that the equality \(\deg(\varphi^n)=[\deg(\varphi)]^n\) holds for all \(n\in\mathbb{N}\).
\end{definition}
\begin{lemma}\label{multiplicity3}
Let \((R,\mathfrak{m})\) be a Noetherian local domain, let \(\varphi\) be a finite local self-map of \(R\). If \(\mathfrak{q}\) is an \(\mathfrak{m}\)-primary ideal of \(R\), then for \(n\in\mathbb{N}\)
\begin{equation}\label{Samuel's Formula}
e(\varphi^n(\mathfrak{q})R)=\frac{e(\mathfrak{q})\left(\deg(\varphi)\right)^n}{[\Fi k:k]^n}.
\end{equation}
\end{lemma}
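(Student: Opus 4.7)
The plan is to view $\varphi^n\colon R\to R$ as a finite local homomorphism of Noetherian local rings, compute the Hilbert--Samuel function along this map in two different ways, and then pass to the leading term. To keep the two copies of $R$ separate in my head, I will think of the target as $R'=R$ with the $R$-module structure determined by $\varphi^n$; as an $R$-module this is precisely $\Fin R$. Because $\varphi$ is finite local on a domain, $R'=\Fin R$ is a finitely generated $R$-module of rank $(\deg\varphi)^n$ by Definition~\ref{degree}, and $\varphi^n(\mathfrak{q})R$ is $\mathfrak{m}$-primary by Proposition~\ref{crucial}.

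First I would compare lengths along the homomorphism $\varphi^n$. For each $k\geq 1$, the ideal $(\varphi^n(\mathfrak{q})R')^{k}$ equals $\varphi^n(\mathfrak{q}^{k})R'$, so the quotient $N_k:=R'/\varphi^n(\mathfrak{q}^{k})R'$ is an $R'$-module of finite length. Applying Proposition~\ref{product formula} to $\varphi^n$ and $N_k$, and noting that $[\Fin k:k]=[\Fi k:k]^{n}$, one obtains
\begin{equation}\label{plan-eq1}
\ell_R\!\left(\Fin R/\varphi^n(\mathfrak{q}^{k})\Fin R\right)=[\Fi k:k]^{n}\cdot\ell_{R}\!\left(R/\varphi^n(\mathfrak{q})^{k}R\right),
\end{equation}
where on the right I have used that $R'=R$ as rings, so $\ell_{R'}=\ell_{R}$ on $R'$-modules.

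The next step is to take Hilbert--Samuel leading coefficients of both sides of~\eqref{plan-eq1}. Multiplying both sides by $d!/k^{d}$ (where $d=\dim R$) and letting $k\to\infty$, the right-hand side converges to $[\Fi k:k]^{n}\cdot e(\varphi^n(\mathfrak{q})R)$. The left-hand side is by definition $e(\mathfrak{q};\Fin R)$, the Hilbert--Samuel multiplicity of the finitely generated $R$-module $\Fin R$ with respect to $\mathfrak{q}$. Here I invoke the additivity (``associativity'') formula for multiplicities on a module: for a finite $R$-module $M$ of dimension $d$,
\[
e(\mathfrak{q};M)=\sum_{\mathfrak{p}}\ell_{R_{\mathfrak{p}}}(M_{\mathfrak{p}})\cdot e(\mathfrak{q};R/\mathfrak{p}),
\]
where $\mathfrak{p}$ runs over the minimal primes of $R$ with $\dim R/\mathfrak{p}=d$. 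Since $R$ is a domain, the only such prime is $(0)$, the contribution $\ell_{R_{(0)}}((\Fin R)_{(0)})$ is precisely $\rank_{R}(\Fin R)=\deg(\varphi^n)=(\deg\varphi)^{n}$, and $e(\mathfrak{q};R/(0))=e(\mathfrak{q})$. Therefore $e(\mathfrak{q};\Fin R)=(\deg\varphi)^{n}\cdot e(\mathfrak{q})$, and equating leading coefficients in~\eqref{plan-eq1} gives
\[
(\deg\varphi)^{n}\cdot e(\mathfrak{q})=[\Fi k:k]^{n}\cdot e(\varphi^n(\mathfrak{q})R),
\]
from which~\eqref{Samuel's Formula} follows at once.

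The main obstacle will be the bookkeeping around the two $R$-module structures: one must be careful that the ``source'' $R$-length on the left of~\eqref{plan-eq1} is computed via $\varphi^n$ (so that Proposition~\ref{product formula} applies) while the ``target'' $R$-length on the right is the ordinary one. Once that is unambiguous, the only substantive ingredient beyond results already in the paper is the additivity formula for Hilbert--Samuel multiplicity on a module over a local domain, which is standard (e.g.\ Matsumura, \emph{Commutative Ring Theory}, Theorem~14.7). All remaining manipulations are elementary limits.
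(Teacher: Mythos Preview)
Your proposal is correct and follows essentially the same route as the paper's own proof: both compute $e(\mathfrak{q},\Fin R)$ two ways, once via Proposition~\ref{product formula} to obtain $[\Fi k:k]^n\cdot e(\varphi^n(\mathfrak{q})R)$, and once via the rank formula for multiplicities over a domain to obtain $(\deg\varphi)^n\cdot e(\mathfrak{q})$. The only cosmetic difference is that the paper cites Matsumura's Theorem~14.8 directly, while you derive the same conclusion from the associativity formula (Theorem~14.7) specialized to a domain.
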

\begin{proof}
Let \(d=\dim R\). By definition of multiplicity
\begin{eqnarray}
e(\mathfrak{q},\Fin R)&=&\lim_{m\rightarrow\infty}\frac{d!}{m^d}\cdot\ell_R\Big(\frac{\Fin R}{\mathfrak{q}^m\cdot\Fin R}\Big)\nonumber\\
&=&\lim_{m\rightarrow\infty}\frac{d!}{m^d}\cdot\ell_R\Big(\Fin\big(\frac{R}{\varphi^n(\mathfrak{q}^m)R}\big)\Big)\nonumber\\
&=&\lim_{m\rightarrow\infty}\frac{d!}{m^d}\cdot\ell_R\Big(\Fin\big(\frac{R}{(\varphi^n(\mathfrak{q})R)^m}\big)\Big)\nonumber
\end{eqnarray}
(for the last equality, see, e.g.,~\cite[Exercise~1.18, p.~10]{AtMc}). Now by Proposition~\ref{product formula}
\begin{eqnarray}
\lim_{m\rightarrow\infty}\frac{d!}{m^d}\cdot\ell_R\Big(\Fin\big(\frac{R}{(\varphi^n(\mathfrak{q})R)^m}\big)\Big)&=&\lim_{m\rightarrow\infty}\frac{d!}{m^d}\cdot[\Fin k:k]\cdot\ell_R\Big(\frac{R}{(\varphi^n(\mathfrak{q})R)^m}\Big)\nonumber\\
&=&[\Fi k:k]^n\cdot\lim_{m\rightarrow\infty}\frac{d!}{m^d}\cdot\ell_R\Big(\frac{R}{(\varphi^n(\mathfrak{q})R)^m}\Big)\nonumber\\
&=&[\Fi k:k]^n\cdot e(\varphi^n(\mathfrak{q})R).\nonumber
\end{eqnarray}
So \(e(\mathfrak{q},\Fin R)=[\Fi k:k]^n\cdot e(\varphi^n(\mathfrak{q})R)\). On the other hand (see~\cite[Theorem~14.8, p.~109]{Matsumura2}) \[e(\mathfrak{q},\Fin R)=e(\mathfrak{q})\cdot\deg(\varphi^n),\]
and Formula~\ref{Samuel's Formula} quickly follows.
\end{proof}
\begin{remark}\label{on degree}
Formula~\ref{Samuel's Formula} can also be deduced from~\cite[Corollary~1, Chapter~VIII, p.~299]{ZarSam}.
\end{remark}
\begin{theorem}\label{degree1}
Let \((R,\mathfrak{m})\) be a Noetherian local domain, and let \(\varphi\) be a finite local self-map of \(R\). Then\smallskip

\begin{compactenum}
\item[\emph{\textbf{a})}] \(\log\deg(\varphi)\leq \log[\Fi k:k]+h_{\mathrm{alg}}(\varphi,R)\). 
\item[\emph{\textbf{b})}] If in addition \(R\) is Cohen-Macaulay, then \[\log\deg(\varphi)=\log[\Fi k:k]+h_{\mathrm{alg}}(\varphi,R).\]
\end{compactenum}
\end{theorem}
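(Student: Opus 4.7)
The plan is to prove the two parts separately, obtaining (b) as the reverse of the inequality in (a) under the Cohen-Macaulay hypothesis. The main tools are Corollary~\ref{genandleng}, which ties $\nu(\Fin R)$ to $\lambda(\varphi^n)$, and Lemma~\ref{multiplicity3}, which compares the multiplicities of $J$ and $\varphi^n(J)R$.

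For (a), the key preliminary is that a finite local self-map of a Noetherian local domain is automatically injective. Indeed, $\varphi(R)\cong R/\ker\varphi$ and the finiteness of $R$ over $\varphi(R)$ forces $\dim R=\dim R/\ker\varphi$, so $\heit(\ker\varphi)=0$; as $R$ is a domain, this gives $\ker\varphi=0$. Consequently $\Fin R$ is a torsion-free finitely generated $R$-module, and the general bound $\rank_R M\leq\nu(M)$ for such modules combined with Corollary~\ref{genandleng} yields
\[
(\deg\varphi)^n=\rank_R(\Fin R)\leq\nu(\Fin R)=[\Fi k:k]^n\cdot\lambda(\varphi^n).
\]
Taking logarithms, dividing by $n$ and letting $n\to\infty$ gives (a).

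For (b), the task is to reverse this inequality. The case $\dim R=0$ is trivial (here $R$ is a field, $h_{\mathrm{alg}}(\varphi,R)=0$, and $\deg\varphi=[\Fi k:k]$), so assume $d:=\dim R\geq 1$ and pick a system of parameters $y_1,\ldots,y_d\in\mathfrak{m}$ generating the parameter ideal $J$. Since $\varphi^n$ is of finite length by Corollary~\ref{crucial4self} and $\mathfrak{m}^r\subseteq J$ for some $r$, the inclusion $(\varphi^n(\mathfrak{m})R)^r\subseteq\varphi^n(J)R$ shows that $\varphi^n(J)R$ is $\mathfrak{m}$-primary; being generated by the $d$ elements $\varphi^n(y_i)$, it is in fact a parameter ideal. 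The Cohen-Macaulay hypothesis now gives $\ell_R(R/\varphi^n(J)R)=e(\varphi^n(J)R)$, and Lemma~\ref{multiplicity3} evaluates the latter as $e(J)(\deg\varphi)^n/[\Fi k:k]^n$. Since $\varphi^n(J)R\subseteq\varphi^n(\mathfrak{m})R$, this yields
\[
\lambda(\varphi^n)\leq\ell_R(R/\varphi^n(J)R)=\frac{e(J)(\deg\varphi)^n}{[\Fi k:k]^n}.
\]
Passing to logarithms, dividing by $n$ and letting $n\to\infty$ produces $h_{\mathrm{alg}}(\varphi,R)\leq\log\deg\varphi-\log[\Fi k:k]$, and combining with (a) gives the desired equality.

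The step I expect to be the main obstacle is the verification that $\varphi^n(J)R$ is a parameter ideal, since the Cohen-Macaulay identity $\ell=e$ applies only to ideals generated by exactly $d=\dim R$ parameters. One must simultaneously control that $\varphi^n$ carries a parameter ideal to an $\mathfrak{m}$-primary ideal and that the generator count is not inflated. The finite-length hypothesis on $\varphi$, via the power-containment inclusion above, is precisely what makes this go through.
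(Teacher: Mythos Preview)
Your proof is correct and follows essentially the same route as the paper: part (a) via the inequality $\rank(\Fin R)\leq\nu(\Fin R)$ combined with Corollary~\ref{genandleng}, and part (b) via the Cohen--Macaulay identity $\ell=e$ for parameter ideals together with Lemma~\ref{multiplicity3}. Your additions---the injectivity of $\varphi$ (not strictly needed, since $\rank M\leq\nu(M)$ holds for any finite module over a domain), the explicit verification that $\varphi^n(J)R$ is a parameter ideal (which the paper leaves implicit; note also that Proposition~\ref{crucial} gives $\mathfrak m$-primariness directly), and the separate treatment of $\dim R=0$---are all sound but do not change the architecture of the argument.
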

\begin{proof}
\textbf{a}) Consider a minimal free presentation of the \(R\)-module \(\Fin R\)
\[R^s\rightarrow R^t\rightarrow\Fin R\rightarrow0.\]
Localizing this presentation at \((0)\) we can see 
\[(\deg(\varphi))^n=\deg(\varphi^n)=\rank\Fin R\leq t=\nu(\Fin R).\] 
Now, since by Corollary~\ref{genandleng}
\[\nu(\Fin R)=[\Fi k:k]^n\cdot\lambda(\varphi^n),\] we obtain
\[(\deg(\varphi))^n\leq[\Fi k:k]^n\cdot\lambda(\varphi^n).\]
We get the desired inequality by applying logarithm, dividing by \(n\) and letting \(n\) approach infinity.\par \textbf{b}) Suppose \(R\) is Cohen-Macaulay and let \(\mathfrak{q}\) be an arbitrary parameter ideal of \(R\). Then
\[\lambda(\varphi^n)=\ell_R\left(R/\varphi^n(\mathfrak{m})R\right)\leq\ell_R\left(R/\varphi^n(\mathfrak{q})R\right).\]
Since \(R\) is Cohen-Macaulay (see, e.g.,~\cite[Theorem~17.11, p.~138]{Matsumura2})
\[\ell_R\left(R/\varphi^n(\mathfrak{q})R\right)=e(\varphi^n(\mathfrak{q})R).\]
Whence
\[\lambda(\varphi^n)\leq e(\varphi^n(\mathfrak{q})R)=\frac{e(\mathfrak{q})(\deg(\varphi))^n}{[\Fi k:k]^n},\]
where the last equality holds by Lemma~\ref{multiplicity3}. Applying logarithm, dividing by \(n\), and letting \(n\) approach infinity we obtain
\[h_{\mathrm{alg}}(\varphi,R)\leq\log\deg(\varphi)-\log[\Fi k:k].\]
This inequality together with the inequality in part \textbf{a}) yield the result.
\end{proof}

\section{Bounds for algebraic entropy}\label{Bounn}
The following definition is inspired by a similar definition given in~\cite[p.~11]{Samuel}.
\begin{definition}\label{lowup}
Let \((R,\mathfrak{m})\) be a Noetherian local ring, and let \(\varphi\) be a self-map of finite length of \(R\). We define
\begin{eqnarray}
v(\varphi)&=&\max\{k\in\mathbb{N}\mid\varphi(\mathfrak{m})R\subset\mathfrak{m}^k\},\nonumber\\ 
w(\varphi)&=&\min\{k\in\mathbb{N}\mid\mathfrak{m}^k\subset\varphi(\mathfrak{m})R\}.\nonumber
\end{eqnarray}
\end{definition}
\begin{remark}\label{comparison}
It quickly follows from this definition, that for all \(n\in\mathbb{N}\)
\[\mathfrak{m}^{w(\varphi^n)}\subset\varphi^n(\mathfrak{m})R\subset\mathfrak{m}^{v(\varphi^n)}.\]
Thus, we always have \(v(\varphi^n)\leq w(\varphi^n)\).
\end{remark}
\begin{lemma}\label{subadditive}
Let \((R,\mathfrak{m})\) be a Noetherian local ring, and let \(\varphi\) be a self-map of finite length of \(R\). Then for all \(m,n\in\mathbb{N}\) the following inequalities hold:
\begin{eqnarray}
v(\varphi^{n+m})&\geq& v(\varphi^n)\cdot v(\varphi^m),\nonumber\\
w(\varphi^{n+m})&\leq& w(\varphi^n)\cdot w(\varphi^m).\nonumber
\end{eqnarray}
\end{lemma}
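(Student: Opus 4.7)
The plan is to prove both inequalities by routine ideal chases, using repeatedly the fact that for any ideal $\mathfrak{b}\subset R$ and any $k\in\mathbb{N}$ one has $\varphi^m(\mathfrak{b}^k)R=\big(\varphi^m(\mathfrak{b})R\big)^k$, which is just the statement that the ring homomorphism $\varphi^m$ commutes with products of elements. The factorization $\varphi^{n+m}(\mathfrak{m})R=\varphi^n\big(\varphi^m(\mathfrak{m})\big)R$ (and its symmetric version) provides the bridge between the two factors, and the definitions of $v(\varphi^{n+m})$ and $w(\varphi^{n+m})$ as a maximum and a minimum mean that producing a single exponent that works will suffice to conclude.

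For the $v$ inequality, I would start from the defining inclusions $\varphi^m(\mathfrak{m})R\subset\mathfrak{m}^{v(\varphi^m)}$ and $\varphi^n(\mathfrak{m})R\subset\mathfrak{m}^{v(\varphi^n)}$ and chase
\[\varphi^{n+m}(\mathfrak{m})R=\varphi^n\big(\varphi^m(\mathfrak{m})\big)R\subset\varphi^n(\mathfrak{m}^{v(\varphi^m)})R=\big(\varphi^n(\mathfrak{m})R\big)^{v(\varphi^m)}\subset\mathfrak{m}^{v(\varphi^n)\cdot v(\varphi^m)}.\]
Since $v(\varphi^{n+m})$ is by definition the maximum exponent $k$ for which $\varphi^{n+m}(\mathfrak{m})R\subset\mathfrak{m}^k$, this yields $v(\varphi^{n+m})\geq v(\varphi^n)\cdot v(\varphi^m)$.

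For the $w$ inequality the direction of the chain reverses. Setting $a=w(\varphi^n)$ and $b=w(\varphi^m)$, so that $\mathfrak{m}^a\subset\varphi^n(\mathfrak{m})R$ and $\mathfrak{m}^b\subset\varphi^m(\mathfrak{m})R$, I would compute
\[\mathfrak{m}^{ab}=(\mathfrak{m}^b)^a\subset\big(\varphi^m(\mathfrak{m})R\big)^a=\varphi^m(\mathfrak{m}^a)R\subset\varphi^m\big(\varphi^n(\mathfrak{m})R\big)R\subset\varphi^{n+m}(\mathfrak{m})R,\]
and conclude by minimality that $w(\varphi^{n+m})\leq a\cdot b$. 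The only step requiring a moment's care, and the one I expect to be the main obstacle, is the last inclusion: one must unpack that a typical element $\sum_i r_i\,\varphi^n(x_i)$ of $\varphi^n(\mathfrak{m})R$ is sent by $\varphi^m$ to $\sum_i\varphi^m(r_i)\,\varphi^{n+m}(x_i)$, which lies in $\varphi^{n+m}(\mathfrak{m})R$, so that extending by $R$ preserves the containment. Beyond this mild subtlety the argument is a direct unwinding of Definition~\ref{lowup}.
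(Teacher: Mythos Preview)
Your proof is correct and follows essentially the same route as the paper's: both arguments hinge on the identity $\varphi^m(\mathfrak{b}^k)R=(\varphi^m(\mathfrak{b})R)^k$ and then chase the defining inclusions for $v$ and $w$ through the factorization $\varphi^{n+m}=\varphi^m\circ\varphi^n$. The only cosmetic difference is which of $m,n$ plays the outer role, and the paper records the final containment $\varphi^m(\varphi^n(\mathfrak{m})R)R=\varphi^{m+n}(\mathfrak{m})R$ as an equality rather than an inclusion, which your unpacking also establishes.
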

\begin{proof}
First note that for any ideal \(\mathfrak{a}\) of \(R\) and any \(n\in\mathbb{N}\) we have (see~\cite[Exercise~1.18, p.~10]{AtMc}) \[\varphi(\mathfrak{a}^n)R=\left(\varphi(\mathfrak{a})R\right)^n.\] Now, we can write
\begin{eqnarray}
\varphi^{m+n}(\mathfrak{m})R&=&\varphi^m(\varphi^n(\mathfrak{m})R)R\subset\varphi^m(\mathfrak{m}^{v(\varphi^n)})R\nonumber\\
&=&\left[\varphi^m(\mathfrak{m})R\right]^{v(\varphi^n)}R\subset\mathfrak{m}^{v(\varphi^m)v(\varphi^n)}.\nonumber
\end{eqnarray}
Thus, by definition of \(v(\varphi^{m+n})\) we must have \(v(\varphi^{m+n})\geq v(\varphi^n)\cdot v(\varphi^m)\). Similarly, we can write
\begin{eqnarray}
\mathfrak{m}^{w(\varphi^m)w(\varphi^n)}&\subset&\left[\varphi^m(\mathfrak{m})R\right]^{w(\varphi^n)}=\varphi^m(\mathfrak{m}^{w(\varphi^n)})R\nonumber\\
&\subset&\varphi^m(\varphi^n(\mathfrak{m})R)R=\varphi^{m+n}(\mathfrak{m})R.\nonumber
\end{eqnarray}
Again, by definition of \(w(\varphi^{m+n})\) we must have \(w(\varphi^{m+n})\leq w(\varphi^m)\cdot w(\varphi^n)\).

\end{proof}
\begin{theorem}\label{conv2}
Let \((R,\mathfrak{m})\) be a Noetherian local ring, and let \(\varphi\) be a self-map of finite length of \(R\). Then the sequences \(\{(\log v(\varphi^n))/n\}\) and \(\{(\log w(\varphi^n))/n\}\) are both convergent. We will denote the limits of these sequences by \(v_h(\varphi)\) and \(w_h(\varphi)\), respectively.
\end{theorem}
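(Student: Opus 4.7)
The plan is to apply Lemma~\ref{subadditively convergent} twice, once to the sequence $a_n = v(\varphi^n)$ (using the supermultiplicative half) and once to $b_n = w(\varphi^n)$ (using the submultiplicative half). The supermultiplicativity of $v(\varphi^n)$ and the submultiplicativity of $w(\varphi^n)$ are exactly the content of Lemma~\ref{subadditive}, so the work reduces to checking the remaining hypotheses of Lemma~\ref{subadditively convergent}, namely that both sequences are bounded below by $1$ and that $\{\sqrt[n]{v(\varphi^n)}\}$ is bounded above.

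First I would dispose of the $w$-sequence, which is the easier case. Since $\varphi$ is of finite length, $\varphi^n(\mathfrak{m})R$ is $\mathfrak{m}$-primary for every $n$ (Corollary~\ref{crucial4self}), hence some power of $\mathfrak{m}$ sits inside $\varphi^n(\mathfrak{m})R$, so $w(\varphi^n)$ is a well-defined positive integer and in particular $w(\varphi^n)\geq1$. Combined with the submultiplicative inequality $w(\varphi^{n+m})\leq w(\varphi^n)\cdot w(\varphi^m)$ from Lemma~\ref{subadditive}, this is exactly what Lemma~\ref{subadditively convergent} requires, so $\{(\log w(\varphi^n))/n\}$ converges to its infimum.

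Next I turn to $v(\varphi^n)$. Since $\varphi$ is local, $\varphi(\mathfrak{m})R\subseteq\mathfrak{m}$, so $v(\varphi)\geq1$; applying the supermultiplicative inequality of Lemma~\ref{subadditive} iteratively gives $v(\varphi^n)\geq v(\varphi)^n\geq1$. For the upper bound on $\sqrt[n]{v(\varphi^n)}$, I would invoke Remark~\ref{comparison} to write $v(\varphi^n)\leq w(\varphi^n)$, and then use Lemma~\ref{subadditive} to bound $w(\varphi^n)\leq w(\varphi)^n$; taking $n$-th roots yields
\[
\sqrt[n]{v(\varphi^n)}\;\leq\;\sqrt[n]{w(\varphi^n)}\;\leq\;w(\varphi),
\]
which is the required uniform upper bound. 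With these hypotheses verified, Lemma~\ref{subadditively convergent} gives convergence of $\{(\log v(\varphi^n))/n\}$ to its supremum.

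There is no serious obstacle here: the proof is almost entirely bookkeeping once Lemma~\ref{subadditive} is in hand. The one step that is not purely formal is the control of $\sqrt[n]{v(\varphi^n)}$ from above, and the key insight is that one cannot bound $v(\varphi^n)$ directly from the definition, but must route the estimate through $w$ using the containment $\varphi^n(\mathfrak{m})R\subseteq\mathfrak{m}^{v(\varphi^n)}$ alongside $\mathfrak{m}^{w(\varphi^n)}\subseteq\varphi^n(\mathfrak{m})R$ recorded in Remark~\ref{comparison}.
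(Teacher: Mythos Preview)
Your proof is correct and follows essentially the same approach as the paper: apply Lemma~\ref{subadditively convergent} with $a_n=v(\varphi^n)$ and $b_n=w(\varphi^n)$, using Lemma~\ref{subadditive} for super/submultiplicativity and the chain $1\leq v(\varphi^n)\leq w(\varphi^n)\leq w(\varphi)^n$ (from Remark~\ref{comparison} and Lemma~\ref{subadditive}) to handle the remaining hypotheses. The paper compresses these checks into the single displayed chain $1\leq v(\varphi)^n\leq v(\varphi^n)\leq w(\varphi^n)\leq w(\varphi)^n$, but the content is identical.
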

\begin{proof}
We will apply Lemma~\ref{subadditively convergent}, taking the sequences \(\{v(\varphi^n)\}\) and \(\{w(\varphi^n)\}\) as \(\{a_n\}\) and \(\{b_n\}\) in the lemma, respectively. We verify that the conditions of the lemma are satisfied. By Lemma~\ref{subadditive} and Remark~\ref{comparison}, for every \(n\in\mathbb{N}\) we have 
\[1\leq[v_(\varphi)]^n\leq v(\varphi^n)\leq w(\varphi^n)\leq[w(\varphi)]^n.\]
Thus, condition \textbf{a}) of Lemma~\ref{subadditively convergent} is satisfied. Moreover, Lemma~\ref{subadditive} shows that condition \textbf{b}) of Lemma~\ref{subadditively convergent} is also satisfied. Hence the sequences \(\{\log(v(\varphi^n))/n\}\) and \(\{\log(w(\varphi^n))/n\}\) are both convergent. 
\end{proof}
\begin{lemma}
Let \((R,\mathfrak{m})\) be a Noetherian local ring with of dimension \(d\), and let \(\varphi\) be a self-map of finite length of \(R\). Then \[d\cdot v_h(\varphi)\leq h_{\mathrm{alg}}(\varphi,R)\leq d\cdot w_h(\varphi).\]
\end{lemma}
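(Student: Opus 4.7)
The approach is to combine the bracketing inclusions supplied by Remark~\ref{comparison} with the classical Hilbert--Samuel asymptotics for the function $k \mapsto \ell_R(R/\mathfrak{m}^k)$.

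First, I would apply Remark~\ref{comparison} with $\varphi$ replaced by $\varphi^n$; taking lengths of the surjections induced by the inclusions $\mathfrak{m}^{w(\varphi^n)} \subseteq \varphi^n(\mathfrak{m})R \subseteq \mathfrak{m}^{v(\varphi^n)}$ immediately yields
\[
\ell_R\bigl(R/\mathfrak{m}^{v(\varphi^n)}\bigr) \;\leq\; \lambda(\varphi^n) \;\leq\; \ell_R\bigl(R/\mathfrak{m}^{w(\varphi^n)}\bigr).
\]

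Next, I would invoke the Hilbert--Samuel theorem: for $k$ sufficiently large, $\ell_R(R/\mathfrak{m}^k)$ agrees with a polynomial in $k$ of degree exactly $d = \dim R$, with positive leading coefficient $e(\mathfrak{m})/d!$. Consequently there exist constants $c, C > 0$ and an integer $k_0$ such that $c\, k^d \leq \ell_R(R/\mathfrak{m}^k) \leq C\, k^d$ for every $k \geq k_0$. Since only finitely many values $k < k_0$ remain, the upper estimate extends to $\ell_R(R/\mathfrak{m}^k) \leq C\, k^d$ for all $k \geq 1$ after possibly enlarging $C$.

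Finally, I would take logarithms, divide by $n$, and pass to the limit using the convergence result of Theorem~\ref{conv2}. The global upper bound gives $\log \lambda(\varphi^n) \leq \log C + d \log w(\varphi^n)$, from which $h_{\mathrm{alg}}(\varphi,R) \leq d \cdot w_h(\varphi)$ follows at once. For the reverse inequality I would split into two cases according to the behavior of $v(\varphi^n)$: if $v(\varphi^n)$ stays bounded then $v_h(\varphi) = 0$ and the inequality is trivial since $h_{\mathrm{alg}}(\varphi,R) \geq 0$; otherwise $v(\varphi^n) \to \infty$, so eventually $v(\varphi^n) \geq k_0$ and the Hilbert--Samuel lower bound yields $\log \lambda(\varphi^n) \geq \log c + d \log v(\varphi^n)$, whence $h_{\mathrm{alg}}(\varphi,R) \geq d \cdot v_h(\varphi)$.

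The only delicate point is the asymmetry of the Hilbert--Samuel bounds: the upper estimate can be made uniform over all $k \geq 1$, whereas the matching lower estimate is valid only once $k$ exceeds some threshold $k_0$. This is precisely what forces the case split in the lower inequality above; beyond that, the proof is essentially bookkeeping with logarithms and limits.
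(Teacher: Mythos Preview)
Your proof is correct and follows essentially the same approach as the paper: bracket $\lambda(\varphi^n)$ between $\ell_R(R/\mathfrak{m}^{v(\varphi^n)})$ and $\ell_R(R/\mathfrak{m}^{w(\varphi^n)})$, invoke Hilbert--Samuel asymptotics, take logarithms, divide by $n$, and pass to the limit. Your organization is in fact slightly more economical than the paper's: by extending the polynomial upper estimate $\ell_R(R/\mathfrak{m}^k)\leq C\,k^d$ to all $k\geq 1$, you dispose of the inequality $h_{\mathrm{alg}}(\varphi,R)\leq d\cdot w_h(\varphi)$ without any case analysis, whereas the paper performs a further split according to whether $w(\varphi^n)\to\infty$; both arguments then handle the lower inequality by the same bounded/unbounded dichotomy for $v(\varphi^n)$ (which is legitimate since supermultiplicativity together with $v(\varphi^m)\geq 1$ forces $\{v(\varphi^n)\}$ to be nondecreasing).
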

\begin{proof}
By Definition~\ref{lowup} we have
\[\mathfrak{m}^{w(\varphi^n)}\subset\varphi^n(\mathfrak{m})R\subset\mathfrak{m}^{v(\varphi^n)}.\] Thus
\[\ell_R(R/\mathfrak{m}^{v(\varphi^n)})\leq\lambda(\varphi^n)\leq\ell_R(R/\mathfrak{m}^{w(\varphi^n)}).\] We consider two cases: \(v(\varphi^n)\rightarrow\infty\) and \(v(\varphi^n)\not\rightarrow\infty\). In the first case, if \(v(\varphi^n)\rightarrow\infty\), then by Remark~\ref{comparison} \(w(\varphi^n)\rightarrow\infty\), as well, and for large \(n\), the lengths \(\ell_R(R/\mathfrak{m}^{v(\varphi^n)})\) and \(\ell_R(R/\mathfrak{m}^{w(\varphi^n)})\) are polynomials in \(v(\varphi^n)\) and \(w(\varphi^n)\), respectively, of precise degree \(d\), with highest degree terms \(e(\mathfrak{m})(v(\varphi^n))^d/d!\) and \(e(\mathfrak{m})(w(\varphi^n))^d/d!\). Thus, for large \(n\) we will have
\[\frac{e(\mathfrak{m})}{d!}\left(v(\varphi^n)\right)^d\leq\lambda(\varphi^n)\leq\frac{e(\mathfrak{m})}{d!}\left(w(\varphi^n)\right)^d.\]
Applying logarithm, dividing by \(n\) and letting \(n\) approach infinity, we see that
\[0\leq d\cdot v_h(\varphi)\leq h_{\mathrm{alg}}(\varphi,R)\leq d\cdot w_h(\varphi)<\infty.\] 
In the second case, when \(v(\varphi^n)\not\rightarrow\infty\), the sequence \(\{v(\varphi^n)\}\) must be bounded. Hence, there is a constant \(c\) such that \(1\leq v(\varphi^n)\leq c\). Applying logarithm, dividing by \(n\) and letting \(n\) approach infinity, we see that \(v_h(\varphi)=0\). Now, if \(w(\varphi^n)\rightarrow\infty\), then starting with the inequality 
\[1\leq\lambda(\varphi^n)\leq\ell_R(R/\mathfrak{m}^{w(\varphi^n)})\]
and repeating the same argument as before, we arrive at the desired inequality
\[v_h(\varphi)=0\leq h_{\mathrm{alg}}(\varphi)\leq d\cdot w_h(\varphi).\]
Finally if \(w(\varphi^n)\not\rightarrow\infty\), then the sequence \(\{w(\varphi^n)\}\) is also bounded and there exists a constant \(c^\prime\) such that \(1\leq w(\varphi^n)\leq c^\prime\). After applying logarithm, dividing by \(n\) and letting \(n\) approach infinity, we see that \(w_h(\varphi)=0\). Since \(v_h(\varphi)=0\) as well, the proof will be completed by showing \(h_{\mathrm{alg}}(\varphi,R)=0\). This follows from the inequality
\[1\leq\lambda(\varphi^n)\leq\ell_R(R/\mathfrak{m}^{w(\varphi^n)})\leq\ell_R(R/\mathfrak{m}^{c^\prime})\]
by applying logarithm, dividing by \(n\) and letting \(n\) approach infinity.
 \end{proof}
\begin{corollary}
Let \((R,\mathfrak{m})\) be a Noetherian local ring, and let \(\varphi\) be a self-map of finite length of \(R\). If \(w(\varphi^n)\not\rightarrow\infty\) then \(h_{\mathrm{alg}}(\varphi,R)=0\).
\end{corollary}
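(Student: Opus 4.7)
The corollary is a direct descendant of the upper bound $h_{\mathrm{alg}}(\varphi,R)\leq d\cdot w_h(\varphi)$ established in the preceding lemma, where $w_h(\varphi)=\lim_{n\to\infty}(\log w(\varphi^n))/n$ is known to exist by Theorem~\ref{conv2}. The plan is therefore to reduce the corollary to the single assertion $w_h(\varphi)=0$, from which the conclusion $h_{\mathrm{alg}}(\varphi,R)=0$ follows instantly (using $\lambda(\varphi^n)\geq 1$ to get the matching lower bound $h_{\mathrm{alg}}(\varphi,R)\geq 0$).

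My first step is to exploit the hypothesis $w(\varphi^n)\not\to\infty$ to produce a subsequence along which $w$ is bounded. Concretely, if the sequence of positive integers $\{w(\varphi^n)\}$ does not tend to infinity, then there exist a constant $M$ and an infinite set of indices $n_1<n_2<\cdots$ with $w(\varphi^{n_k})\leq M$ for all $k$. This is the only role played by the hypothesis.

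Next, since the full sequence $\{(\log w(\varphi^n))/n\}$ converges to $w_h(\varphi)$ by Theorem~\ref{conv2}, every subsequence must converge to the same limit. Along $\{n_k\}$ we have
\[
0\leq \frac{\log w(\varphi^{n_k})}{n_k}\leq \frac{\log M}{n_k}\xrightarrow[k\to\infty]{}0,
\]
so $w_h(\varphi)=0$. Combining this with the upper bound from the previous lemma yields $h_{\mathrm{alg}}(\varphi,R)\leq d\cdot w_h(\varphi)=0$, and the inequality $\lambda(\varphi^n)\geq 1$ gives $h_{\mathrm{alg}}(\varphi,R)\geq 0$, completing the proof.

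There is no real obstacle here; the entire content is already packaged in Theorem~\ref{conv2} (existence of the limit defining $w_h(\varphi)$) and the preceding lemma (the inequality $h_{\mathrm{alg}}(\varphi,R)\leq d\cdot w_h(\varphi)$). The only subtlety worth flagging is that one should not try to argue that $\{w(\varphi^n)\}$ itself is bounded; a sequence of positive integers can fail to tend to infinity while still being unbounded, so the argument must go through a bounded subsequence and the uniqueness of limits, as above.
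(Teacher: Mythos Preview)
Your proof is correct and lands on the same conclusion $w_h(\varphi)=0$, then invokes the upper bound $h_{\mathrm{alg}}(\varphi,R)\leq d\cdot w_h(\varphi)$ from the preceding lemma. The paper gives no separate proof of the corollary; the argument is already contained in the final case of that lemma's proof, where the authors assert that $\{w(\varphi^n)\}$ is bounded by some $c'$ and then bound $\lambda(\varphi^n)$ directly by the constant $\ell_R(R/\mathfrak{m}^{c'})$.

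On your cautionary remark about boundedness: in this particular setting the paper's shortcut is actually justified. Since $\varphi$ is local, $\varphi^{n+1}(\mathfrak{m})R\subseteq\varphi^n(\mathfrak{m})R$, so any $k$ with $\mathfrak{m}^k\subseteq\varphi^{n+1}(\mathfrak{m})R$ also satisfies $\mathfrak{m}^k\subseteq\varphi^n(\mathfrak{m})R$; hence $w(\varphi^{n+1})\geq w(\varphi^n)$. A non-decreasing integer sequence that fails to tend to infinity is eventually constant, hence bounded. Your subsequence argument sidesteps the need for this monotonicity observation and is a perfectly clean alternative.
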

\appendix\section{Topological entropy}\label{appind}
A local self-map of finite length of a Noetherian local ring \(R\) induces a self-map on \(\Spec(R)\), as well as the punctured spectrum \(U_R\) of \(R\). The complexity of the induced map may be measured by the notion of \emph{topological entropy} that was introduced in~\cite{AdKoMc}. The induced map could be as simple as the identity map, even when the ring self-map is not identity. For example, when \(R\) is of positive characteristic \(p\), then the Frobenius endomorphism induces the identity map on the punctured spectrum. It is then not surprising that the topological entropy of the Frobenius endomorphism is \(0\), while its algebraic entropy is \(d\cdot\log p\), as was shown in Example~\ref{FrobEnd}. The following definitions are taken from~\cite{AdKoMc}.\par Let \(X\) be a compact topological space. For any open cover \(\mathcal{U}\) of \(X\) let 
\[N(\mathcal{U})=\min\{\mid\mathcal{V}\mid\colon \mathcal{V}\ \mathrm{is\ a\ subcover\ of\ }\mathcal{U}\}.\]
Note that since \(X\) is compact, \(N(\mathcal{U})\) is always finite. If \(\mathcal{U}\) and \(\mathcal{W}\) are two open covers of \(X\), define their \emph{join} \(\mathcal{U}\vee\mathcal{W}\) as follows
\[\mathcal{U}\vee\mathcal{W}=\{U\cap W\mid U\in\mathcal{U}, W\in\mathcal{W}\}.\]
An open cover \(\mathcal{W}\) is said to be a \emph{refinement} of a cover \(\mathcal{U}\), written \(\mathcal{U}\prec\mathcal{W}\), if every member of \(\mathcal{W}\) is a subset of some member of \(\mathcal{U}\).
\begin{definition}[\cite{AdKoMc}, p.~310]\label{topent}
Let \(\varphi:X\rightarrow X\) be a continuous self-map of a compact topological space \(X\). Then the topological entropy of \(\varphi\), \(h_{\mathrm{top}}(\varphi,X)\) is defined as follows
\[h_{\mathrm{top}}(\varphi,X)=\sup_{\mathcal{U}} h(\varphi,\mathcal{U}),\]
where
\[h(\varphi,\mathcal{U})=\lim_{n\rightarrow\infty}\frac{1}{n}\cdot\log N\left(\mathcal{U}\vee\varphi^{-1}(\mathcal{U})\vee\ldots\vee\varphi^{-(n-1)}(\mathcal{U})\right).\]
\end{definition}
The existence of the limit in Definition~\ref{topent} was established in~\cite[p.~310]{AdKoMc}. 
\begin{proposition}[\cite{AdKoMc}, Property~10, p.~310]\label{basicopen}
If \(\mathcal{U}\) and \(\mathcal{W}\) are open covers of a topological space \(X\) such that \(\mathcal{U}\prec\mathcal{W}\) and if \(\varphi\) is a continuous self-map of \(X\), then \(h(\varphi,\mathcal{U})\leq h(\varphi,\mathcal{W})\).
\end{proposition}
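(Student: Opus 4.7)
The plan is to unpack the definition of $h(\varphi,\mathcal{U})$ and show that the refinement relation $\prec$ is preserved by each of the operations appearing in it, so that refinement-monotonicity of $N$ finishes the job.

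First I would record three easy lemmas about the relation $\prec$, all direct from definitions. \textbf{(i)} If $\mathcal{U}\prec\mathcal{W}$, then $\varphi^{-1}(\mathcal{U})\prec\varphi^{-1}(\mathcal{W})$: indeed, for $W\in\mathcal{W}$ choose $U\in\mathcal{U}$ with $W\subset U$; then $\varphi^{-1}(W)\subset\varphi^{-1}(U)$, with $\varphi^{-1}(U)\in\varphi^{-1}(\mathcal{U})$. Iterating, $\varphi^{-i}(\mathcal{U})\prec\varphi^{-i}(\mathcal{W})$ for every $i\geq0$. \textbf{(ii)} If $\mathcal{U}_1\prec\mathcal{W}_1$ and $\mathcal{U}_2\prec\mathcal{W}_2$, then $\mathcal{U}_1\vee\mathcal{U}_2\prec\mathcal{W}_1\vee\mathcal{W}_2$: given $W_1\cap W_2\in\mathcal{W}_1\vee\mathcal{W}_2$, pick $U_j\in\mathcal{U}_j$ with $W_j\subset U_j$, so $W_1\cap W_2\subset U_1\cap U_2\in\mathcal{U}_1\vee\mathcal{U}_2$. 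An easy induction then yields
\[
\bigvee_{i=0}^{n-1}\varphi^{-i}(\mathcal{U})\;\prec\;\bigvee_{i=0}^{n-1}\varphi^{-i}(\mathcal{W})
\]
for every $n\geq 1$. \textbf{(iii)} If $\mathcal{A}\prec\mathcal{B}$, then $N(\mathcal{A})\leq N(\mathcal{B})$: take any minimal subcover $\{B_1,\ldots,B_k\}\subset\mathcal{B}$ of cardinality $k=N(\mathcal{B})$; for each $i$ choose $A_i\in\mathcal{A}$ with $B_i\subset A_i$. Then $X=\bigcup_i B_i\subset\bigcup_i A_i$, so $\{A_1,\ldots,A_k\}$ is a subcover of $\mathcal{A}$, whence $N(\mathcal{A})\leq k=N(\mathcal{B})$.

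Combining these, set
\[
\mathcal{U}_n:=\bigvee_{i=0}^{n-1}\varphi^{-i}(\mathcal{U}),\qquad \mathcal{W}_n:=\bigvee_{i=0}^{n-1}\varphi^{-i}(\mathcal{W}).
\]
By (i) and (ii) we have $\mathcal{U}_n\prec\mathcal{W}_n$, and by (iii) this gives $N(\mathcal{U}_n)\leq N(\mathcal{W}_n)$. Taking logarithms, dividing by $n$, and letting $n\to\infty$ (both limits exist by the discussion in the Appendix immediately preceding the proposition), I obtain
\[
h(\varphi,\mathcal{U})=\lim_{n\to\infty}\frac{\log N(\mathcal{U}_n)}{n}\;\leq\;\lim_{n\to\infty}\frac{\log N(\mathcal{W}_n)}{n}=h(\varphi,\mathcal{W}),
\]
which is the desired inequality.

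There is no real obstacle here; the proof is essentially bookkeeping. The only step requiring a moment of care is (iii), where one must be careful to use the direction of the refinement relation correctly, namely that a short subcover of the \emph{finer} cover $\mathcal{B}$ produces an equally short (indexed in the same way) subcover of the \emph{coarser} cover $\mathcal{A}$.
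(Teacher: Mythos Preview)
Your proof is correct; the three lemmas (i)--(iii) are exactly the standard ingredients, and the argument is complete as written. The paper itself does not prove this proposition at all---it is merely stated with a citation to \cite[Property~10, p.~310]{AdKoMc}---so there is no proof to compare against; your argument simply supplies what the paper leaves to the reference.
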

In applications to algebraic geometry, one can consider topological entropy for self-maps of \(\Spec(R)\), as well as self-maps of the punctured spectrum \(U_R\) of a Noetherian local ring \(R\). Note that \(\Spec(R)\) and \(U_R\) are  compact spaces (see~\cite[Proposition~1.1.4, p.~195]{GroDieu}).

\bibliographystyle{amsplain}

\begin{thebibliography}{99}

\bibitem{AdKoMc} R.~L.~Adler, A.~G.~Konheim, and M.~H.~McAndrew, \textit{Topological Entropy}, Transactions of the American Mathematical Society, Vol. 114, Pages 309-319 (1965).

\bibitem{AtMc} M.~F.~Atiyah and I.~G.~Macdonald, \textit{Introduction to Commutative Algebra}, Addison-Wesley Publishing Company (1969).

\bibitem{HA} L.~Avramov, M.~Hochster, S.~Iyengar, and Y.~Yao, \textit{Homological invariants of modules over contracting homomorphisms}, Available electronically at \href{http://arxiv.org/pdf/1010.3029}{{\color{black}{http://arxiv.org/pdf/1010.3029}}}.

\bibitem{AvrIynMill06} L.~Avramov, S.~Iyengar, and C.~Miller, \textit{Homology over local homomorphisms}, American Journal of Mathematics, Vol. 128, Number 1, Pages 23-90 (2006).

\bibitem{Bourb} N.~Bourbaki, \textit{Commutative algebra}, (2nd printing), Chapters~1-7, Springer-Verlag (1989).

\bibitem{GroDieu} A.~Grothendieck and J.~A.~Dieudonn\'{e}, \textit{El\'{e}ments de G\'{e}om\'{e}trie Alg\'{e}brique I}, Springer-Verlag (1971).

\bibitem{Gromov} M.~Gromov, \textit{On the entropy of holomorphic maps}, L'Enseignement Math., Vol. 49, Pages 217-235 (2003). 

\bibitem{Kol} A.~N.~Kolmogorov, \textit{A new metric invariant of transitive dynamical systems and of endomorphisms of Lebesgue spaces},  Dokl. Akad. Nauk SSSR, Vol. 119, Pages 861-864 (1958).  

\bibitem{Kunz1} E.~Kunz, \textit{Characterization of regular local rings of characteristic \(p\)}, American Journal of Mathematics, Vol. 41, Pages 772-784 (1969).

\bibitem{Matsumura2} H.~Matsumura, \textit{Commutative Ring Theory}, Cambridge University Press (1986).

\bibitem{MisPrz} M.~Misiurewicz and F.~Przytycki, \textit{Topological entropy and degree of smooth mappings}, Bull. Acad. Polon. Sci., Ser. Math. Astron. et Phys., Vol. 25, Pages 573-547 (1977).

\bibitem{Monsk1} P.~Monsky, \textit{The Hilbert-Kunz function}, Math. Annal., Vol. 263, Pages 43-49, (1983).

\bibitem{Samuel} P.~Samuel, Some asymptotic properties of powers of ideals, \textit{Annals of Mathematics}, Vol. 56, No. 1, Pages 11-21 (1952).

\bibitem{Sin} Ya.~G.~Sinai, \textit{On the Notion of Entropy of a Dynamical System},  Dokl. Akad. Nauk SSSR, Vol. 124, Pages 768-771 (1959). 

\bibitem{Walters} P.~Walters, \textit{An introduction to ergodic theory}, Springer Graduate Texts in Mathematics, Vol. 79, Springer-Verlag (2000).

\bibitem{Young} L.-S.~Young, \textit{Entropy in dynamical systems}, in: Entropy, Edited by: A.~Greven, G.~Keller and G.~Warnecke, Princeton Series in Applied Mathematics, Princeton University Press, Pages 313-327 (2003).

\bibitem{ZarSam} O.~Zariski and P.~Samuel, \textit{Commutative algebra} (Vol. II), Springer Graduate Texts in Mathematics, Vol. 29, Springer-Verlag (1976).





\end{thebibliography}

\end{document}